\documentclass[11pt,reqno]{amsart}
\usepackage{graphicx,amsmath,amsthm,verbatim,amssymb,enumerate}
\usepackage{xcolor}
\definecolor{darkgreen}{rgb}{0,0.75,0}
\definecolor{darkred}{rgb}{0.75,0,0}
\definecolor{darkmagenta}{rgb}{0.5,0,0.5}
\usepackage[colorlinks,citecolor=darkgreen,linkcolor=darkred,urlcolor=darkmagenta]{hyperref}
\bibliographystyle{abbrvnat}

\addtolength{\oddsidemargin}{-.2in}
\addtolength{\evensidemargin}{-.2in}
\addtolength{\textwidth}{.4in}

\newcommand{\zb}[1]{{\tt \href{https://zbmath.org/?q=an:#1}{Zbl#1}}}
\newcommand{\mr}[1]{{\tt \href{http://www.ams.org/mathscinet-getitem?mr=#1}{MR#1}}}
\newcommand{\arxiv}[1]{{\tt \href{http://arxiv.org/abs/#1}{arXiv:#1}}}

\newcommand{\old}[1]{}

\newcommand{\abs}[1]{{\left\vert\kern-0.25ex #1
    \kern-0.25ex\right\vert}}

\DeclareRobustCommand{\SkipTocEntry}[5]{}

\hyphenation{qua-si-ran-dom}
\hyphenation{Schutz-en-ber-ger}
\hyphenation{com-mut-at-iv-ity}

\newtheorem{theorem}{Theorem}[section]
\newtheorem{prop}[theorem]{Proposition}

\newtheorem{lemma}[theorem]{Lemma}
\newtheorem{lem}[theorem]{Lemma}

\theoremstyle{remark}

\newtheorem{rem}{Remark}
\newtheorem*{notn}{Notation}

\numberwithin{counter}{section}

\theoremstyle{definition}
\newtheorem{definition}[theorem]{Definition}

\def\ind{\mathbf{1} } 

\def\00{\mathbf{0}}



\def\F{\mathcal{F}}
\def\E{\mathcal{E}}
\def\T{\mathcal{T}}

\def\N{\mathbb{N}}
\def\Z{\mathbb{Z}}

\def\R{\mathbb{R}}
\def\EE{\mathbb{E}}
\def\PP{\mathbb{P}}

\newcommand\norm[1]{\left\lVert#1\right\rVert} 

\begin{document}

\begin{abstract}
Let $(M,d,\mu)$ be a uniformly discrete metric measure space satisfying space homogeneous volume doubling condition. We consider discrete time Markov chains on $M$ symmetric with respect to $\mu$ and whose one-step transition density is
comparable to $ (V_h(d(x,y)) \phi(d(x,y))^{-1}$, where $\phi$ is a positive continuous regularly varying function with index $\beta \in (0,2)$ and $V_h$ is the homogeneous volume growth function.
Extending several existing work by other authors, we prove global upper and lower  bounds for $n$-step transition probability density that are sharp up to constants.
\end{abstract}

\title{ Transition probability estimates for long range random walks}
\author{Mathav Murugan, Laurent Saloff-Coste$^\dagger$}
\thanks{$\dagger$Both the authors were partially supported by NSF grants  DMS 1004771 and DMS 1404435.}
\address{Mathav Murugan, Department of Mathematics, University of British Columbia and Pacific Institute for the Mathematical Sciences, Vancouver, BC V6T 1Z2, Canada. mathav@math.ubc.ca}
\address{Laurent Saloff-Coste,  Department of Mathematics, Cornell University, Ithaca, NY 14853, USA.  lsc@math.cornell.edu}
\date{\today}
\maketitle
\section{Introduction}
Let $(M,d,\mu)$ be a countable, metric measure space. We assume that $(M,d,\mu)$ is \emph{uniformly discrete}, that is there exists $a>0$ such that any two distinct points $x,y \in M$ satisfy $d(x,y) >a$.
The main example we have in mind are connected graphs with its natural graph metric.

Further we assume that the measure $\mu$ is comparable to the counting measure in the following sense: there exists $C_\mu \in [1,\infty)$ such that $\mu_x = \mu\left( \{x\}\right)$ satisfies
\begin{equation} \label{e-count}
 C_\mu^{-1} \le \mu_x \le C_\mu
\end{equation}
for all $x \in M$.
Let $B(x,r):= \{ y \in M : d(x,y) \le r \}$ be the  ball in $M$ for metric $d$ with center $x$ and radius $r \ge 0$.
Let $V(x,r):= \mu(B(x,r))$ denote the volume of the  ball centered at $x$ of radius $r$.

We consider metric measure spaces $(M,d,\mu)$ satisfying the following uniform volume doubling assumption:
there exists a non-decreasing function $V_h :[0,\infty) \to (0,\infty)$ and constants $C_D,C_h  \ge 1$ such that
\begin{equation} \label{e-vd}
 V_h(2r) \le C_D V_h(r)
\end{equation}
for all $r >0$ and
\begin{equation} \label{e-hom}
C_h^{-1} V_h(r) \le V(x,r) \le C_h V_h(r)
\end{equation}
for all $x \in M$ and for all $r >0$. It can be easily seen from \eqref{e-vd} that
\begin{equation} \label{e-vc}
 \frac{V_h(R)}{V_h(r) } \le C_D \left( \frac{R}{r} \right)^\alpha
\end{equation}
for all  $0 < r \le R$ and for all $\alpha \ge \log_2 C_D$. For the rest of the work, we assume that
our metric measure space $(M,d,\mu)$ is uniformly discrete satisfying \eqref{e-count}, \eqref{e-vd} and \eqref{e-hom}.

In this paper, we consider discrete time Markov chains $\{ X_n , n \ge 0, \PP^x, x \in M\}$ that are reversible with respect to the measure $\mu$.
That is the  transition probabilities $p(x,y)$ satisfy
\begin{equation} \label{e-reverse}
 p(x,y) \mu_x = p(y,x) \mu_y
\end{equation}
for all $x,y \in M$.
The associated Markov operator $P$, given by
\[
 P f(x) = \sum_{x \in M} p(x,y) f(y)
\]
is self-adjoint in $\ell^2(M,\mu)$. We assume that the walk has infinite lifetime, that is $\sum_{z \in M} p(x,z) =1$ for all $x \in M$.

For $n \in \N := \{0,1,\ldots\} $, let $p_n$ denote the $n^{\operatorname{th}}$ iterated power of $p$, that is
\[
p_0(x,y) = \delta_{x,y}:= \begin{cases} 0, &\mbox{if } x \neq y,\\
1, & \mbox{if }  x=y, \end{cases}
\]
and
\[
 p_n(x,y) = \sum_{z \in M} p_{n-1}(x,z) p (z,y) ,\hspace{.5cm} n \ge 1.																																
\]
In other words, $p_n(x,y)$ is the transition function of the random walk $X_n$, i.e.,
\[
 p_n(x,y) = \PP^x(X_n=y),
\]
or the kernel of the operator $P^n$ with respect to counting measure. Define the heat kernel, that is, the kernel of $P^n$ with respect to $\mu$, or the transition density of $X_n$, by
\[
 h_n(x,y):= \frac{p_n(x,y)}{\mu_y}.
\]
Clearly $h_n$ is symmetric, that is, $h_n(x,y)=h_n(y,x)$. As a consequence of the semigroup law $P^{m+n}=P^mP^n$, the heat kernel satisfies the Chapman-Kolmogorov equation
\begin{equation}\label{e-ck}
 h_{n+m}(x,y)= \sum_{z \in M} h_n(x,z) h_m(z,y) \mu_z
\end{equation}
for all $x,y \in M$ and for all $n,m \in \N$. Define the jump kernel (or conductance) $J :=h_1$ as the kernel of $P$ with respect to $\mu$.

We consider random walks with unbounded range and the following conditions may be imposed on the jump kernel $J$.
We say that $J$ satisfies \ref{ujp} , if there exists $C>0$ such that
\begin{equation} \label{ujp}
 J(x,y) \le \frac{C}{(1+d(x,y))^\beta V_h(d(x,y))}       \tag*{($UJP(\beta)$)}
\end{equation}
for all $x,y \in M$. Similarly, we say  $J$ satisfies \ref{ljp} , if there exists $c>0$ such that
\begin{equation} \label{ljp}
 J(x,y) \ge \frac{c}{(1+d(x,y))^\beta V_h(d(x,y))}       \tag*{($LJP(\beta)$)}
\end{equation}
for all $x,y \in M$. If $J$ satisfies both \ref{ujp} and \ref{ljp}, we say $J$ satisfies $JP(\beta)$.

We wish to prove the following estimates for the heat kernel $h_n$. We say $h_n$ satisfies \ref{uhkp}, if there exists $C>0$ such that
\begin{equation} \label{uhkp}
 h_n(x,y) \le C \left( \frac{1}{ V_h(n^{1/\beta})} \wedge \frac{n}{ (1 + d(x,y))^\beta V_h(d(x,y))} \right) \tag*{($UHKP(\beta)$)}
\end{equation}
for all $n \in \N^*$ and for all $x ,y \in M$. Similarly, we say $h_n$ satisfies \ref{lhkp}, if there exists $c>0$ such that
\begin{equation}\label{lhkp}
 h_n(x,y) \ge c  \left( \frac{1}{ V_h(n^{1/\beta})} \wedge \frac{n}{ (1 + d(x,y))^\beta V_h(d(x,y))} \right) \tag*{($LHKP(\beta)$)}
\end{equation}
for all $n \in \N^*$ and for all $x ,y \in M$. If $h_n$ satisfies both \ref{uhkp} and \ref{lhkp}, we say $h_n$ satisfies $HKP(\beta)$.

\begin{rem}
\begin{enumerate}[(a)]
 \item By \eqref{e-count}, we may equivalently replace $h_n$ by $p_n$ in \ref{uhkp} and \ref{lhkp}.
 \item One of the advantages of working in the setting on uniformly discrete metric spaces (as opposed to connected graphs) is that  $JP(\beta)$
  and $HKP(\beta)$ can be easily generalized if we replace $(1+d(x,y))^\beta$ by a regularly varying function of index $\beta$. This remark will be made precise in last section (see Theorem \ref{t-main}).
\end{enumerate}
\end{rem}
Let $\mathcal{E}$ denote the Dirichlet form associated with $P$ defined by
\[
 \mathcal{E}(f,f):=  \langle (I-P) f,f \rangle = \frac{1}{2} \sum_{x,y \in M} (f(x)-f(y))^2 J(x,y) \mu_x \mu_y
\]
for all $x,y \in \ell^2(M,\mu)$, where $\langle \cdot , \cdot \rangle $ denotes the inner product in $\ell^2 (M,\mu)$. We abbreviate $\mathcal{E}(f,f)$ by $\mathcal{E}(f)$.
Since $\mathcal{E}$ is a Dirichlet form, we have
\begin{equation}\label{e-cont}
 \mathcal{E}( (f-t)^+ \wedge s ) \le \mathcal{E}(f)
\end{equation}
for all $s,t \in [0,\infty)$ and for all $f \in \ell^2(M,\mu)$. We will frequently work with the corresponding continuous time Markov chain
defined by $Y_t := X_{N(t)}$ where $N(t)$ is a standard Poisson process independent of $(X_n)_{n \in \N}$. We denote the transition probability density of $Y_t$ with respect to $\mu$ by $q_t$, that is
\begin{equation} \label{e-qdef}
 q_t(x,y) := \frac{\PP^x(Y_t=y)}{ \mu_x} = \sum_{k=0}^\infty \frac{e^{-t}t^k}{k!}  h_k(x,y).
\end{equation}
By $\norm{f}_p$ we denote the $p$-norm in $\ell^p(M,\mu)$, where $1 \le p \le \infty$.
The main result of this paper is the following.
\begin{theorem} \label{t-main}
 Let $(M,d,\mu)$ be a countable, uniformly discrete, metric measure space satisfying \eqref{e-count}, \eqref{e-vd} and \eqref{e-hom}.
 Assume $\beta \in (0,2)$ and $\phi:[0,\infty) \to (0,\infty)$ be a continuous, positive regularly varying function with index $\beta$ such that
 $\phi(x) = \left( (1+x)l(x) \right)^\beta $ where $l$ is slowly varying function.
 Let $\mathcal{E}$ be a Dirichlet form
and $C_1 >0$ be a constant such that the jump kernel $J=h_1$ with respect to $\mu$  satisfies
 \begin{equation}\label{e-mn}
 C_1^{-1}\frac{1}{V_h(d(x,y)) \phi(d(x,y))} \le  J(x,y)=J(y,x) \le C_1 \frac{1}{V_h(d(x,y)) \phi(d(x,y))}
 \end{equation}
for all $x,y \in M$. Then there exists $C_2>0$ such that
\begin{align*}
 h_n(x,y) &\le C_2 \left( \frac{1}{ V_h(n^{1/\beta}l_{\#}(n^{1/\beta}))} \wedge \frac{n}{ V_h(d(x,y))\phi(d(x,y))} \right) \\
 h_n(x,y) &\ge C_2^{-1} \left( \frac{1}{ V_h(n^{1/\beta} l_{\#}(n^{1/\beta}))} \wedge \frac{n}{  V_h(d(x,y)) \phi(d(x,y))} \right)
\end{align*}
for all $n \in \N^*$ and for all $x,y \in M$, where $l_{\#}$ denotes the de Bruijn conjugate of $l$.
\end{theorem}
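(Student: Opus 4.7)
The plan is to carry out the standard three-step $\beta$-stable-like heat-kernel template—on-diagonal upper bound from a Nash inequality, off-diagonal upper bound from Meyer's truncation together with Davies' exponential perturbation, and lower bound via a near-diagonal estimate combined with one-large-jump chaining—while pushing every quantitative step through with the regularly varying weight $\phi$ rather than a pure power $(1+d)^\beta$. The identity $\phi^{-1}(n) \asymp n^{1/\beta} l_\#(n^{1/\beta})$, which follows from Karamata's theorem applied to $\phi(x) = ((1+x)l(x))^\beta$ together with the defining relation $l(x)\,l_\#(x l(x)) \to 1$, is what forces the de Bruijn conjugate into the on-diagonal scale that appears in the statement.

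For the upper bound I would first deduce a Nash-type inequality from the lower half of \eqref{e-mn} and the uniform volume doubling \eqref{e-vd}, and integrate along the semigroup to obtain $h_n(x,x) \lesssim 1/V_h(n^{1/\beta} l_\#(n^{1/\beta}))$, which propagates to a uniform bound in $(x,y)$ via Cauchy--Schwarz and \eqref{e-ck}. For the off-diagonal part, Meyer's decomposition $J = J^{(R)} + J_{(R)}$ at scale $R \asymp d(x,y)$, combined with the Davies exponential method applied to the truncated semigroup with test function $\psi(z) = \lambda(d(z,y) \wedge R)$, yields super-exponential decay; the discarded large-jump part contributes the polynomial term $n/(V_h(d(x,y))\phi(d(x,y)))$ through a standard Duhamel inequality. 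It is convenient to run all of this in continuous time on the Poissonized kernel $q_t$ of \eqref{e-qdef} and then transfer back to $h_n$ via $q_t \asymp h_{\lfloor t\rfloor}$, valid once $P$ has spectrum bounded away from $-1$ (which follows from \eqref{e-reverse} and $\ell^2$-boundedness of $\mathcal{E}$).

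For the lower bound I would first establish a near-diagonal estimate $h_n(x,y) \gtrsim 1/V_h(n^{1/\beta} l_\#(n^{1/\beta}))$ when $d(x,y) \le \eta\, n^{1/\beta} l_\#(n^{1/\beta})$, starting from the matching on-diagonal lower bound (obtained by Cauchy--Schwarz against the conservation identity $\sum_z h_n(x,z)\mu_z = 1$ and the upper bound) and contracting oscillations using \eqref{e-cont}. For larger distances, use the one-large-jump inequality
\[
 h_n(x,y) \ge \sum_{u \in B(x,r)} \sum_{v \in B(y,r)} h_{\lfloor n/3 \rfloor}(x,u)\, \mu_u\, J(u,v)\, \mu_v\, h_{n-1-\lfloor n/3 \rfloor}(v,y),
\]
with $r \asymp n^{1/\beta} l_\#(n^{1/\beta})$: the two heat-kernel factors are controlled by the near-diagonal estimate, $J(u,v)$ by \eqref{e-mn} and the doubling property \eqref{e-vd}, and the volumes $\mu(B(\cdot,r))$ by \eqref{e-hom}, yielding the desired $c\, n/(V_h(d(x,y))\phi(d(x,y)))$.

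The hardest step is the Davies perturbation in the presence of a non-power $\phi$. One must show that the moment generator
\[
 \Lambda(\psi)^2 := \sup_{x \in M} \sum_{y \in M} \bigl(e^{\psi(x)-\psi(y)}-1\bigr)^2 J(x,y)\,\mu_y
\]
satisfies $\Lambda(\psi)^2 \lesssim 1/\phi(1/\lambda)$ uniformly in the truncation scale $R$, which requires a careful Karamata-style integration of the regularly varying tail against the exponential test function. Tracking the induced optimization in $\lambda$ (at scale $1/d(x,y)$) and in $t$ (at scale $n$) is exactly what forces $l_\#$ into the final on-diagonal denominator $V_h(n^{1/\beta} l_\#(n^{1/\beta}))$, and this is essentially the only point where the slow-variation theory enters in a nontrivial way.
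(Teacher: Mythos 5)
Your overall architecture (Nash $\Rightarrow$ on-diagonal bound, Meyer truncation $+$ Davies $\Rightarrow$ off-diagonal upper bound, near-diagonal estimate $+$ one big jump $\Rightarrow$ lower bound) is the right template, but you propose to push the general regularly varying $\phi$ through every step, whereas the paper first proves the theorem for $\phi(x)=(1+x)^\beta$ and then reduces the general case to it by a change of metric: Lemma \ref{l-concave} produces a concave increasing $g$ with $1+g \asymp \phi^{1/\delta}$ for some $\delta\in(\beta,2)$, one sets $d'=g\circ d$, and the jump kernel becomes power-like in $d'$; the de Bruijn conjugate then appears simply from inverting $g$ (via \cite[Proposition 1.5.15]{BGT}), not from a Karamata analysis inside the Davies optimization. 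This device is exactly what lets the paper avoid the step you yourself identify as hardest, namely bounding $\Lambda(\psi)^2$ for a non-power $\phi$; in your write-up that step is flagged but not carried out, and since the entire off-diagonal upper bound hinges on it, it is a real gap rather than a routine verification.

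Two further points would not survive scrutiny as written. First, the transfer $q_t \asymp h_{\lfloor t\rfloor}$ does not follow from reversibility and $\ell^2$-boundedness of $\mathcal{E}$: those only give $\operatorname{spec}(P)\subset[-1,1]$, and periodic examples show $-1$ can be attained. What actually rescues the argument here is the lower bound in \eqref{e-mn} applied at $x=y$, which gives $\inf_x h_1(x,x)\ge\kappa>0$ and hence $h_{n+1}\ge (\kappa/C_\mu)h_n$; the paper combines this with monotonicity of $n\mapsto h_{2n}(x,x)$ and Poisson concentration to move the on-diagonal bound from $q_t$ to $h_n$, and crucially it does \emph{not} transfer the off-diagonal upper bound from $q_t$ to $h_n$ by comparison at all (see Remark \ref{r-dis}); instead it derives hitting/exit time estimates from Theorem \ref{t-ubq} and runs a Bass--Levin iteration to get a parabolic Harnack inequality (Theorem \ref{t-phi}), from which the discrete-time off-diagonal upper bound follows. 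Second, your near-diagonal lower bound is asserted to come from ``contracting oscillations using \eqref{e-cont}''; that property (stability of $\mathcal{E}$ under normal contractions) gives no pointwise lower bound on $h_n(x,y)$ for $x\ne y$, and this is precisely the step where the paper has to invoke the Harnack inequality (Lemma \ref{l-ndlb}). Your one-big-jump chaining for $d(x,y)\gtrsim n^{1/\beta}l_\#(n^{1/\beta})$ is sound and close to the paper's probabilistic argument, but it is only as strong as the near-diagonal input feeding it. To make the proposal complete you would either need to supply the general-$\phi$ Davies estimate and a genuine near-diagonal lower bound mechanism, or adopt the paper's reduction to the power case.
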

\begin{rem} \label{r-dis}
 Similar estimates can be easily obtained for the continuous time kernel $q_t$ using \eqref{e-qdef} and the above Theorem. However, in general it is not easy to obtain estimates on $h_n$ given estimates on $q_t$.
\end{rem}

Such estimates were first obtained in \cite{BL} for discrete time Markov chain on $\Z^d$.
Other early works include \cite{CK1}, \cite{CK2} which  concerns jump process on
metric measure spaces with homogeneous volume growth that are subsets of metric spaces having a scaling structure (see (1.15) in \cite{CK2}).
We do not require any such scaling structure, however we require that our metric space is uniformly discrete.
The relationship between heat kernel upper bounds for jump processes and exit time estimates is explored in \cite{BGK}
and the relationship between parabolic Harnack inequality and heat kernel estimates for jump processes is studied in \cite{BBK}.
All these works with the exception of \cite{BL} are for continuous time jump processes.

In light of Remark \ref{r-dis}, we find it advantageous to work in discrete time setting.
It seems appropriate to have a detailed self-contained proof of Theorem \ref{t-main}.
It is a technically interesting open problem to generalize Theorem \ref{t-main} if we replace  homogeneous volume doubling assumption given by \eqref{e-hom} and \eqref{e-vd} by the more general volume doubling condition:
There exists $C_D >1$ such that $V(x,2r) \le C_D V(x,r)$ for all $x \in M$ and for all $r>0$.

It may be useful to comment on the detailed assumption \eqref{e-mn}
in Theorem 1.1 or, for simplicity, on \ref{ujp} and \ref{ljp}.
Roughly speaking, we require that the jump kernel $J$ has a uniform power
type decay with parameter $\beta\in (0,2)$ and it is natural to ask if
this assumption can be weakened. The answer to this question depends greatly on whether one insists on obtaining  matching two-sided bounds in time-space as we do here or if one is content with sharp information on the ``on diagonal behavior" of the iterated kernel $h_n$.
The answer also depend on how much one is willing to assume on the
underlying metric space.

In the context considered here where \eqref{e-count} and \eqref{e-vd} are the main assumptions
on the underlying metric space and if one insists
on obtaining  matching two-sided bounds in time-space,
it seems very difficult, both technically and conceptually,
to relax the assumption on $J$. See the related results in \cite{GHL}.

To help the reader gain some insight on the difficulties involved,
we consider several options and point to some related works.
\begin{enumerate}[(A)]
\item What happens if $\beta\ge 2$?   Even in the simplest setting of
$\mathbb Z$ or $\mathbb R$, no sharp two-sided time-space estimates
are available for the iterated kernel $h_n$ when $\beta \ge 2$ (especially, when $\beta=2$!).  In general,
in order to describe the ``on-diagonal" behavior of $h_n$,
very restrictive additional hypotheses on the underlying metric measure space are necessary. See \cite{BCG}, \cite{SZ1} and \cite{MS}.

\item What happens if $J(x,y)\simeq 1/(V_h(d(x,y))\phi((d(x,y)))$ with $\phi$
growing slower than a power function, e.g.,
$\phi (t)=(1+\log (1+t))^\gamma$, $\gamma>1$? Under some additional hypotheses
on the underlying space, it is possible to study the ``on-diagonal" behavior
of the iterated kernel $h_n$. In many cases, the ``on-diagonal" decay of $h_n$
is expected (or known) to be faster than any inverse power function.
In such cases, sharp two-sided time-space estimates are extremely difficult and not really expected (even the form such estimates should take is unknown). See \cite{SZ2} and \cite{SZ3}.

\item  What happens if $J$ oscillates? For instance $J$ could be
radial with a lacunary power like decay including long intervals on
which $J=0$. In such cases, as in the case when $\beta\ge 2$, the
on-diagonal behavior of $h_n$ will typically depend on making additional hypotheses on  the underlying space and sharp two-sided time-space estimates may be very difficult to obtain.
\end{enumerate}

The paper is organized as follows. The rest of the paper is devoted to the proof of Theorem \ref{t-main}. We first prove Theorem \ref{t-main} in a slightly restricted setting under the assumption that $J$ satisfies
\ref{ujp} and \ref{ljp}. In Section \ref{s-gen}, we use a change of metric argument to handle the general case. In Section \ref{s-ond}, we use a Nash inequality to obtain on-diagonal upper bounds, \emph{i.e.\ }upper bounds on $h_n(x,x)$ as $x \in M$ and  $n \in \N^*$ varies. In Section \ref{s-ub}, following \cite{BGK} we use  Meyer decomposition of the jump process along with Davies perturbation method to obtain upper bounds on the transition probability density. In Section \ref{s-phi}, we prove a parabolic Harnack inequality using an iteration argument due to Bass and Levin \cite{BL}. In
Section \ref{s-hke}, we use the parabolic Harnack inequality to obtain two-sided bounds on the transition probability density.

\section{On-diagonal upper bound}\label{s-ond}
In this section, we prove a Nash inequality using  `slicing techniques' developed in  \cite{BCLS}. This approach of proving Sobolev-type inequalities is outlined in Section 9 of \cite{BCLS}.
We remark that different Nash inequalities developed in \cite{CK1} and \cite{CK2} would yield the desired on-diagonal upper bounds as well.

We say $\mathcal{E}$ satisfies Nash inequality \ref{N}, if there exist constants $\alpha, C_1,C_2 \in (0,\infty)$ such that
\begin{equation} \label{N}
\norm{f}_2 \le  C_1 \left(\left(\frac{R^{\alpha}}{V_h(R)}\right)^{\frac{\beta}{\alpha}} \left( \mathcal{E}(f) + C_2 R^{-\beta} \norm{f}_2^2 \right) \right)^{\frac{\alpha}{2(\alpha+\beta)}} \norm{f}_1^ {\frac{\beta}{\alpha+\beta}} \tag*{ ($N(\beta)$)}
\end{equation}
for all $R >0$ and for all $f \in \ell^1(M,\mu)$.
We obtain, on-diagonal upper bound on $q_t(x,x)$ and $h_n(x,x)$ as a consequence of Nash inequality \ref{N}.
Before proving Nash inequality, we show that Nash inequality \ref{N} implies the desired on-diagonal estimate on $q_t$.

\begin{prop} \label{p-ultra}
If the Dirichlet form $\mathcal{E}$ satisfies \ref{N}, then there exists constant $C_4>0$ such that
\begin{equation}
 \label{e-c-dub} q_t(x,x) \le C_4 \frac{1}{V_h(t^{1/\beta})}
\end{equation}
for all $t >0$ and  for all $x \in M$.
\end{prop}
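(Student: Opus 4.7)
The plan is to set up and integrate a differential inequality for the on-diagonal $\ell^2$-quantity
$$u(t) := \norm{q_t(x,\cdot)}_2^2.$$
By the semigroup version of \eqref{e-ck} together with the symmetry of $q_t$ one has $u(t) = q_{2t}(x,x)$, while conservation of mass for $Y_t$ gives $\norm{q_t(x,\cdot)}_1 = 1$. The continuous-time semigroup $P_t = e^{-t(I-P)}$ and the Dirichlet form $\mathcal{E}$ yield $u'(t) = -2\mathcal{E}(q_t(x,\cdot))$. I would then apply \ref{N} to $f = q_t(x,\cdot)$, square, and raise to the power $(\alpha+\beta)/\alpha$; using $\norm{f}_1 \le 1$ this produces, for every fixed $R > 0$ and every $t \ge 0$,
$$u(t)^{(\alpha+\beta)/\alpha} \le C_3\, \frac{R^\beta}{V_h(R)^{\beta/\alpha}}\left(-\frac{u'(t)}{2} + \frac{C_2\, u(t)}{R^\beta}\right).$$

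The main technical point is absorbing the zero-order term $C_2 R^{-\beta} u$, which is the price one pays for the $R$-dependent correction in the imperfect Nash inequality. For a fixed $R$, I would set $v(t) := e^{-\lambda t} u(t)$ with $\lambda := 2C_2/R^\beta$, chosen precisely so that $-u'/2 + C_2 u/R^\beta = -e^{\lambda t}\, v'/2$. Dividing the previous inequality by $e^{\lambda t}$ and discarding the factor $e^{(\beta/\alpha)\lambda t}\ge 1$ on the left yields the clean autonomous differential inequality
$$v(t)^{1+\beta/\alpha} \le -\frac{C_3\, R^\beta}{2\, V_h(R)^{\beta/\alpha}}\, v'(t).$$
Separating variables and integrating from $0$ to $t$ (the quantity $-v'/v^{1+\beta/\alpha}$ is an exact derivative of $(\alpha/\beta)\, v^{-\beta/\alpha}$) gives $v(t) \le C_4\, R^\alpha\, t^{-\alpha/\beta}\, V_h(R)^{-1}$ for all $t > 0$, independently of the initial value $v(0)$.

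Finally, I would optimize in $R$ by choosing $R = t^{1/\beta}$: this makes $\lambda t = 2C_2$ a constant and $R^\alpha t^{-\alpha/\beta} = 1$, so
$$u(t) = e^{\lambda t}\, v(t) \le \frac{C_4\, e^{2C_2}}{V_h(t^{1/\beta})}.$$
Replacing $t$ by $t/2$ and invoking the doubling bound \eqref{e-vd} to compare $V_h((t/2)^{1/\beta})$ with $V_h(t^{1/\beta})$ (with a constant depending only on $C_D$ and $\beta$) converts this on-diagonal bound for $q_{2t}$ into the stated bound for $q_t(x,x)$. Apart from the exponential change of variable in the second paragraph, every step is a standard Nash-type manipulation, so I expect the absorption trick to be the only nontrivial ingredient.
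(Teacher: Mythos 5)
Your proof is correct and follows essentially the same route as the paper: your exponential tilting $v=e^{-\lambda t}u$ with $\lambda=2C_2R^{-\beta}$ is precisely the paper's perturbed semigroup $T_t^R=e^{-C_2R^{-\beta}t}e^{-t(I-P)}$, and the final choice $R=t^{1/\beta}$ is identical. The only difference is that the paper delegates the integration of the resulting Nash differential inequality to Proposition II.1 of \cite{Cou}, whereas you carry it out by hand; your version is self-contained but yields only the on-diagonal bound rather than the $\ell^1\to\ell^\infty$ operator bound, which is all that is needed here.
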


\begin{proof}
Let $C_1$ and $C_2$ be the constants from \ref{N}. Define the semigroup $T_t^R$ and an operator $A_R$ by
\begin{align*}
 T_t^R &= e^{-C_2 R^{-\beta }t} e^{-t(I-P)} \\
A_R &= (1+C_2 R^{-\beta}) I - P.
\end{align*}
It is easy to check that $-A_R$ is the infinitesimal generator of the semigroup $T_t^R$ and that $T_t^R$ is equicontinuous contraction on $\ell^1(M,\mu)$ and $\ell^\infty (M,\mu)$ with
\[
 \sup_{t} \norm{T_t^R}_{1 \to 1} , \sup_t \norm{T_t^R}_{\infty \to \infty} \le 1.
\]
By \ref{N}, we have
\[
 \theta_R(\norm{f}_2^2) \le \langle A_R f,f \rangle, \hspace{0.5cm} \forall f \in \ell^2(M,\mu), \norm{f}_1 \le 1
\]
where
\[
 \theta_R(t) = C_1^{-1} \left( \frac{V_h(R)}{R^\alpha} \right)^{\beta/\alpha} t^{1+\frac{\beta}{\alpha}}.
\]
Hence by Proposition II.1 of \cite{Cou}, there exists $C_3>0$ such that
\[
 e^{-C_2 R^{-\beta}t} \sup_{x} q_t(x,x) = \norm{T_t^R}_{1 \to \infty} \le C_3 t^{-\alpha/\beta} \left( \frac{R^\alpha}{V_h(R)}\right)
\]
for all $t,R >0$. Fixing $R= t^{1/\beta}$, we get
\[
\sup_{x} q_t(x,x) \le \frac{C_3 e^{C_2} }{V_h(t^{1/\beta})}
\]
which proves \eqref{e-c-dub}.
\end{proof}

Define
\[
 \F :=  \{ f\in \ell^1(M,\mu) : f \ge 0 \}
\]
to be the class of non-negative $\ell^1$ functions. It is easy to check that $\F$ satisfies the following properties:
\begin{enumerate}[(a)]
 \item (Stability under slicing) $f \in \F$ implies $(f-t)^+ \wedge s \in \F$ for all $s ,t \ge 0$.
 \item $\F$ is a cone, that is for any $t>0$ and $f \in \F$, we have $t f \in \F$.
 \item $\F \subset \ell^p(M,\mu)$ for all $p \in [1,\infty]$.
\end{enumerate}

Let $W(f)$ be a semi-norm on $\F$. We recall some properties introduced in \cite{BCLS}. We say $W$ satisfies
\ref{h-inf} if there exists a constant $A_\infty^+$ such that
\begin{equation}
\label{h-inf} \tag*{ ($H_\infty^+$)} W \left( (f-t)^+ \wedge s \right) \le A_\infty^+ W(f).
\end{equation}
 for all $f \in \F$ and for all $s,t \ge 0$.
For any $\rho > 1$, $k \in \Z$ and any function $f \in \F$, set
\[
 f_{\rho,k} = (f - \rho^k)^+ \wedge \rho^k ( \rho - 1)
\]
which is also in $\F$.
Fix $ l>0$ and $\rho >1$. We say that $W$ satisfies the condition \ref{h-lr} if there exists a constant $A_l(\rho)$ such that

\begin{equation}
 \label{h-lr} \tag*{ ($H^\rho_l$)} \left( \sum_{k \in \Z} W(f_{\rho,k})^l \right)^{1/l} \le A_l(\rho) W(f).
\end{equation}
for all $ f \in \F$.
The properties \ref{h-inf} and \ref{h-lr} are preserved under positive linear combinations of semi-norms as shown below.
\begin{lem} \label{l-H1}
Let $N_1$ and $N_2$ be semi-norms on $\F$ satisfying  \ref{h-inf} with constants $A_{\infty,1}$, $A_{\infty,2}$ such that for all
$f \in \mathcal{F}$ and for all $s,t \ge 0$
\begin{align*}
N_1( (f-t)^+ \wedge s ) &\le  A_{\infty,1} N_1(f) \\
N_2( (f-t)^+ \wedge s ) &\le  A_{\infty,2} N_2(f).
\end{align*}
Then for any $c_1,c_2 \ge 0$, the semi-norm $N= c_1 N_1 + c_2 N_2$  satisfies  $(H^+_\infty)$ with
\begin{equation*}
N( (f-t)^+ \wedge s ) \le  \max(A_{\infty,1},A_{\infty,2}) N(f)
\end{equation*}
for all $f \in \mathcal{F}$ and $s,t \ge 0$.
\end{lem}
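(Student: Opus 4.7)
The plan is a one-line computation. First, note that the class $\F$ is closed under slicing (property (a) listed just before the lemma), so $(f-t)^+ \wedge s \in \F$ and the hypotheses on $N_1$ and $N_2$ may legitimately be applied to it. Expanding $N = c_1 N_1 + c_2 N_2$, applying \ref{h-inf} to each $N_i$ separately, and using $c_1, c_2 \ge 0$ to replace each $A_{\infty,i}$ by $A_\infty := \max(A_{\infty,1}, A_{\infty,2})$, one obtains
\[
N((f-t)^+ \wedge s) = c_1 N_1((f-t)^+ \wedge s) + c_2 N_2((f-t)^+ \wedge s) \le A_\infty \bigl( c_1 N_1(f) + c_2 N_2(f) \bigr) = A_\infty N(f),
\]
which is exactly the stated bound.

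There is no genuine obstacle. The argument rests only on two elementary facts: the closure of $\F$ under the slicing map $f \mapsto (f-t)^+ \wedge s$, which guarantees that the hypotheses are applicable to $(f-t)^+ \wedge s$ in the first place, and the nonnegativity of the coefficients $c_1, c_2$, which ensures that taking the maximum of the two constants bounds both summands simultaneously without reversing any inequality. The constant $\max(A_{\infty,1}, A_{\infty,2})$ cannot be improved in general, as the degenerate cases $c_1 = 0$ or $c_2 = 0$ already attain it. The same style of argument would extend to positive linear combinations of any finite collection of semi-norms satisfying \ref{h-inf}, which is the use case later in the paper when several slicing-compatible semi-norms are combined to build the working functional on $\F$.
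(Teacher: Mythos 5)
Your proof is correct and is essentially identical to the paper's: expand $N$ as $c_1N_1+c_2N_2$, apply \ref{h-inf} to each term, and bound both constants by their maximum using $c_1,c_2\ge 0$. The remarks about closure of $\F$ under slicing and the extension to finite positive combinations are accurate but add nothing beyond the paper's argument.
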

\begin{proof}
\begin{align*}
N( (f-t)^+ \wedge s ) &= c_1 N_1 ( (f-t)^+ \wedge s ) + c_2 N_2 ( (f-t)^+ \wedge s ) \\
& \le c_1 A_{\infty,1} N_1 (f) + c_2 A_{\infty,2} N_2(f) \\
& \le  \max (A_{\infty,1},A_{\infty,2}) N(f).
\end{align*}

\end{proof}

\begin{lem} \label{l-H2}
Fix  $\rho > 1$ and $l > 0$.
Let $N_1$ and $N_2$ be semi-norms on $\F$ satisfying \ref{h-lr} with constants $A_{l,1}(\rho)$, $A_{l,2}(\rho)$ such that for all
$f \in \mathcal{F}$ and for all $s,t \ge 0$
\begin{align*}
 \left( \sum_{k \in \mathbb{Z}} N_1(f_{\rho,k})^l \right)^{1/l} &\le  A_{l,1}(\rho) N_1(f) \\
 \left( \sum_{k \in \mathbb{Z}} N_2 (f_{\rho,k})^l \right)^{1/l} &\le  A_{l,2}(\rho) N_2(f).
\end{align*}
Then for any $c_1,c_2 \ge 0$, the semi-norm $N= c_1 N_1 + c_2 N_2$  satisfies  \ref{h-lr}  with
\begin{equation*}
\left( \sum_{k \in \mathbb{Z}} N (f_{\rho,k})^l \right)^{1/l} \le  2^{(l+1)/l} \max(A_{l,1}(\rho),A_{l,2}(\rho)) N(f)
\end{equation*}
for all $f \in \mathcal{F}$ and $s,t \ge 0$.
\end{lem}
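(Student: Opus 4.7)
The plan is to combine the two hypotheses by a double application of the elementary inequality $(a+b)^l \le 2^l(a^l+b^l)$, which holds for every $l>0$ since $a+b\le 2\max(a,b)$. This trick replaces the Minkowski inequality, which would only be available for $l\ge 1$; using it twice (once to split the sum, once to recombine) accounts for the $2^{(l+1)/l}$ factor in the target inequality.

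More concretely, the first step is to write $N(f_{\rho,k}) = c_1 N_1(f_{\rho,k}) + c_2 N_2(f_{\rho,k})$ and to raise both sides to the power $l$, obtaining
\[
N(f_{\rho,k})^l \le 2^l\bigl(c_1^l N_1(f_{\rho,k})^l + c_2^l N_2(f_{\rho,k})^l\bigr).
\]
Summing over $k\in\mathbb Z$ and plugging in the assumed bounds for $N_1$ and $N_2$, with $A := \max(A_{l,1}(\rho), A_{l,2}(\rho))$, yields
\[
\sum_{k\in\mathbb Z} N(f_{\rho,k})^l \;\le\; 2^l A^l \bigl(c_1^l N_1(f)^l + c_2^l N_2(f)^l\bigr).
\]

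The second step is to recombine the right-hand side back into $N(f)^l$. Applying the same elementary inequality in the reverse direction, namely $a^l+b^l \le 2\max(a,b)^l \le 2(a+b)^l$ with $a=c_1 N_1(f)$ and $b=c_2 N_2(f)$, gives
\[
c_1^l N_1(f)^l + c_2^l N_2(f)^l \;\le\; 2\bigl(c_1 N_1(f) + c_2 N_2(f)\bigr)^l \;=\; 2\, N(f)^l.
\]
Combining the two estimates produces $\sum_k N(f_{\rho,k})^l \le 2^{l+1} A^l N(f)^l$, and extracting the $l$-th root delivers the stated bound $2^{(l+1)/l} \max(A_{l,1}(\rho), A_{l,2}(\rho)) N(f)$.

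There is no real obstacle here; the only subtlety is that the constant is not sharp for $l\ge 1$ (where Minkowski's inequality would give $\max(A_{l,1}(\rho),A_{l,2}(\rho))$ directly), but the above uniform argument handles all $l>0$ at once, which is what is needed since in the applications of the slicing method of \cite{BCLS} one typically uses $l<1$.
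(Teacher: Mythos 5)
Your proof is correct and is essentially the same as the paper's: both split $N(f_{\rho,k})$ with the elementary inequality $(a+b)^l\le 2^l(a^l+b^l)$, apply the two hypotheses termwise, and recombine with the reverse inequality $a^l+b^l\le 2(a+b)^l$, yielding exactly the factor $2^{(l+1)/l}$. The only difference is cosmetic (you work with $l$-th powers throughout while the paper works with $1/l$-th roots), so nothing further is needed.
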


\begin{proof}
\begin{align*}
 \left( \sum_{k \in \mathbb{Z}} N (f_{\rho,k})^l \right)^{1/l}  & \le  \left( \sum_{k \in \mathbb{Z}} ( c_1 N_1 (f_{\rho,k}) + c_2 N_2(f_{\rho,k}) )^l \right)^{1/l}  \\
& \le  2 \left( \sum_{k \in \mathbb{Z}} \left(  c_1^l N_1 (f_{\rho,k})^l + c_2^l N_2(f_{\rho,k})^l \right) \right)^{1/l}  \\
& \le  2  \left( A_{l,1}(\rho)^l c_1^l N_1 (f)^l + A_{l,2}(\rho)^l c_2^l N_2(f)^l \right)^{1/l}  \\
& \le    2^{1/l} 2 \left(  A_{l,1}(\rho) c_1 N_1 (f) + A_{l,2}(\rho) c_2 N_2(f) \right)  \\
&\le  2^{ (l+1)/l} \max(A_{l,1}(\rho),A_{l,2}(\rho)) N(f)
\end{align*}
We use the two assumptions and the two elementary inequalities $x+y \ge 2^{-1/l}(x^l + y^l)^{1/l}$ and $x+y \le 2 (x^l + y^l)^{1/l}$ for $x,y \ge 0$ and $l > 0 $.
\end{proof}
The important observation on Lemmas \ref{l-H1} and \ref{l-H2} is that the constants for properties \ref{h-lr} and \ref{h-inf} of $N$ does not depend on $c_1$ or $c_2$. We now prove the following pseudo-Poincar\'{e} inequality.
\begin{prop}[Pseudo-Poincar\'{e} inequality]\label{p-pp}
Let $(M,d,\mu)$ be a uniformly discrete, metric measure space satisfying \eqref{e-count},\eqref{e-vd} and \eqref{e-hom} and let $\mathcal{E}$ be a Dirichlet form whose jump kernel $J$ satisfies \ref{ljp}.
There exist constant $C_P>0$ such that
\begin{equation}
 \label{e-pp} \norm{f-f_r}_2^2 \le C_P r^\beta \mathcal{E}(f)
\end{equation}
 for all $f \in \ell^2(M,\mu)$ and for all $r >0$,
where $f_r(x) := \frac{1}{V(x,r)} \sum_{y \in B(x,r)} f(y) \mu_y$ is the $\mu$-average of $f$ in $B(x,r)$.
\end{prop}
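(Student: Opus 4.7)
The plan is the standard Jensen-then-swap argument, combined with the lower bound \ref{ljp} restricted to the ball of radius $r$. First, writing $f(x) - f_r(x) = V(x,r)^{-1} \sum_{y \in B(x,r)} (f(x) - f(y)) \mu_y$ and applying the Cauchy-Schwarz inequality yields
\[
|f(x) - f_r(x)|^2 \leq \frac{1}{V(x,r)} \sum_{y \in B(x,r)} (f(x) - f(y))^2 \mu_y.
\]
Multiplying by $\mu_x$, summing over $x$, and interchanging the order of summation gives
\[
\norm{f - f_r}_2^2 \leq \sum_{\substack{x,y \in M \\ d(x,y) \leq r}} (f(x) - f(y))^2 \frac{\mu_x \mu_y}{V(x,r)}.
\]

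Next, the homogeneous volume bound \eqref{e-hom} gives $1/V(x,r) \leq C_h/V_h(r)$, and the monotonicity of $V_h$ combined with \ref{ljp} yields, for $d(x,y) \leq r$,
\[
J(x,y) \geq \frac{c}{(1+d(x,y))^\beta V_h(d(x,y))} \geq \frac{c}{(1+r)^\beta V_h(r)},
\]
so $1/V_h(r) \leq c^{-1}(1+r)^\beta J(x,y)$ on the strip $\{d(x,y) \leq r\}$. Substituting this into the previous display and recognizing $\mathcal{E}(f) = \tfrac{1}{2} \sum_{x,y} (f(x)-f(y))^2 J(x,y) \mu_x \mu_y$ bounds $\norm{f - f_r}_2^2$ by a constant multiple of $(1+r)^\beta \mathcal{E}(f)$.

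Finally, I would convert $(1+r)^\beta$ into $r^\beta$ by exploiting the uniform discreteness of $(M,d)$. Let $a > 0$ be the separation constant from the definition. For $r < a$, every ball $B(x,r) = \{x\}$, so $f_r = f$ and the inequality is trivial; for $r \geq a$, we have $(1+r)^\beta \leq (1 + 1/a)^\beta r^\beta$, and the factor $(1+1/a)^\beta$ is absorbed into $C_P$. The whole argument is essentially routine; the only delicate point is this last reduction, where uniform discreteness is used in an essential way to prevent the blow-up of $(1+r)^\beta / r^\beta$ near $r = 0$ and to make the $\beta$-scaling uniform in $r > 0$.
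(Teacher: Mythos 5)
Your proof is correct and follows essentially the same route as the paper's: Jensen/Cauchy--Schwarz on the ball average, interchange of the sums, and then \ref{ljp} together with \eqref{e-hom} to dominate the resulting double sum by $\mathcal{E}(f)$. The only (harmless) difference is where the factor $r^\beta$ and the uniform discreteness enter: the paper inserts $r^\beta/\bigl(d(x,y)^\beta V(x,d(x,y))\bigr)$ pointwise and passes through the Besov seminorm $W_\beta$, using $d(x,y)\ge a$ to trade $d(x,y)^\beta$ for $(1+d(x,y))^\beta$, whereas you work at scale $r$ throughout, obtain $(1+r)^\beta$, and then use the case split $r<a$ (where $f_r=f$) versus $r\ge a$ to convert it to $r^\beta$ --- both are valid.
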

\begin{proof}
We have
\begin{align*}
| f(x) - f_r(x)|^2 & =   \left| \frac{1}{\mu(B(x,r))} \int_{B(x,r)} ( f(x) - f(y)) d\mu(y) \right|^2 \\
& \le  \frac{1}{\mu(B(x,r))} \int_{B(x,r)} |f(x) - f(y)|^2 d\mu_c(y) \\
& \le  r^\beta \int_{B(x,r) \setminus \{x\}} \frac{|f(x) - f(y)|^2}{d(x,y)^\beta} \frac{d\mu_c(y)}{\mu(B(x,d(x,y)))} \\
\end{align*}
The second line above follows from Jensen's inequality.
Hence for $0 < r < \infty$, we have
\begin{equation} \label{e-pp1}
\| f - f_r \|_2  \le  r^{\beta/2} W_\beta (f)
\end{equation}
for all $f \in \ell^2(M,\mu)$, where $W_\beta$ denotes the Besov semi-norm
 \[
  W_\beta(f) = \left( \sum_{x,y \in M: x\neq y} \frac{\abs{f(x)- f(y)}^2}{d(x,y)^\beta V(x,d(x,y))} \mu_x \mu_y \right)^{1/2}.
 \]
Combining $d(x,y) \notin (0,a)$, \eqref{e-hom} and \ref{ljp}, there exists $C_2 >0$ such that
\begin{equation}\label{e-pp2}
 W_\beta(f)^2 \le (1+a^{-1})^\beta \left( \sum_{x,y \in M} \frac{\abs{f(x)- f(y)}^2}{(1+d(x,y))^\beta V(x,d(x,y))} \mu_x \mu_y \right) \le C_2\mathcal{E}(f).
\end{equation}
 The pseudo-Poincar\'{e} inequality \eqref{e-pp} follows from \eqref{e-pp1} and \eqref{e-pp2}.
\end{proof}
We are now ready to prove the Nash inequality \ref{N}.
\begin{prop}[Nash inequality]\label{p-nash}
Let $(M,d,\mu)$ be a uniformly discrete, metric measure space satisfying \eqref{e-count},\eqref{e-vd} and \eqref{e-hom} and let $\mathcal{E}$ be a Dirichlet form whose jump kernel $J$ satisfies \ref{ljp}.
Then $\mathcal{E}$ satisfies the Nash inequality \ref{N}.
\end{prop}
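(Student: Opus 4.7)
My plan is to derive the Nash inequality \ref{N} from the pseudo-Poincar\'e inequality \eqref{e-pp} of Proposition \ref{p-pp}, together with simple $\ell^p$ bounds on the averaging operator $f \mapsto f_r$ and the power-type volume comparison \eqref{e-vc}. The slicing lemmas \ref{l-H1}--\ref{l-H2} give a clean way to bundle the Dirichlet form together with the lower-order term $C_2 R^{-\beta}\norm{f}_2^2$ into a single seminorm obeying \ref{h-inf}, but the conceptual core is a two-case optimization over the scale $r$.

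First I establish a one-scale Sobolev-type estimate valid for every $r>0$. The averaging operator satisfies $\norm{f_r}_\infty \le C_h V_h(r)^{-1}\norm{f}_1$ directly from \eqref{e-hom}, while a Fubini computation combined with \eqref{e-hom} yields $\norm{f_r}_1 \le C_h^2 \norm{f}_1$, so $\norm{f_r}_2^2 \le \norm{f_r}_\infty\norm{f_r}_1 \le C_h^3 V_h(r)^{-1}\norm{f}_1^2$. The triangle inequality $\norm{f}_2 \le \norm{f-f_r}_2 + \norm{f_r}_2$ and \eqref{e-pp} then give
\begin{equation*}
\norm{f}_2^2 \le C_4\bigl(r^\beta \mathcal{E}(f) + V_h(r)^{-1}\norm{f}_1^2\bigr) \quad \text{for all } r>0,
\end{equation*}
with $C_4 = 2\max(C_P, C_h^3)$.

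Given $R>0$ and $f \in \ell^1(M,\mu)$ with $\mathcal{E}(f)>0$, I introduce the balancing scale $r_*$ determined by $r_*^\beta V_h(r_*) = \norm{f}_1^2/\mathcal{E}(f)$, and split into two cases. When $r_* \le R$, the one-scale bound at $r_*$ gives $\norm{f}_2^2 \le 2C_4 r_*^\beta \mathcal{E}(f)$, and \eqref{e-vc} applied in the form $V_h(r_*)\ge C_D^{-1}V_h(R)(r_*/R)^\alpha$ yields $r_*^{\alpha+\beta}\le C_D R^\alpha V_h(R)^{-1}\norm{f}_1^2/\mathcal{E}(f)$; substituting back and raising to the power $1/2$ produces \ref{N} directly (after using $\mathcal{E}(f) \le \mathcal{E}(f) + C_2 R^{-\beta}\norm{f}_2^2$). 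When $r_*>R$, the definition of $r_*$ forces $R^\beta\mathcal{E}(f)\le V_h(R)^{-1}\norm{f}_1^2$, so the one-scale bound at $r=R$ collapses to $\norm{f}_2^2 \le 2C_4 V_h(R)^{-1}\norm{f}_1^2$. Raising this to the $\beta/\alpha$ power and multiplying by the tautology $\norm{f}_2^2 \le C_2^{-1}R^\beta(\mathcal{E}(f)+C_2 R^{-\beta}\norm{f}_2^2)$ manufactures the missing $C_2 R^{-\beta}\norm{f}_2^2$ term and delivers \ref{N}. The degenerate situations $\mathcal{E}(f)=0$ and $r_* \in \{0,\infty\}$ follow by taking limits or from the trivial bound $\norm{f}_2 \le \sqrt{\norm{f}_\infty \norm{f}_1}$.

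The main obstacle is the accounting in the second case: the role of the extra $C_2 R^{-\beta}\norm{f}_2^2$ term in \ref{N} is precisely to absorb contributions from functions concentrated on a scale larger than $R$, where $\mathcal{E}(f)$ alone is too small to control $\norm{f}_2$, and getting the powers of $R$, $V_h(R)$, $\norm{f}_1$ and $\norm{f}_2$ to line up with the exponent $\alpha/(2(\alpha+\beta))$ requires patient but routine manipulation. All constants depend only on $C_D, C_P, C_h$, $\alpha$ (any $\alpha \ge \log_2 C_D$), and a chosen value of $C_2$ (e.g.\ $C_2=1$), which is then absorbed into $C_1$.
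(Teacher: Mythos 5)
Your proof is correct, but it takes a genuinely different and more elementary route than the paper. The paper first converts the pseudo-Poincar\'e inequality \eqref{e-pp} and the $\ell^1\to\ell^\infty$ bound on $f\mapsto f_r$ into a \emph{weak-type} Sobolev inequality \eqref{e-ws} (a bound on $\sup_\lambda \lambda^{2\tau}\mu(f\ge\lambda)$, obtained by splitting on the size of the level $\lambda$ rather than on the scale $r$), and then upgrades it to the strong inequality \ref{N} via the slicing machinery of \cite{BCLS} — this is exactly why Lemmas \ref{l-H1} and \ref{l-H2} and the condition $\alpha>\beta$ appear, and it yields as a byproduct the whole family of $\norm{f}_r$--$\norm{f}_s$ interpolation inequalities. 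You instead run the classical direct argument: $\norm{f}_2\le\norm{f-f_r}_2+\norm{f_r}_2$ gives the one-scale bound $\norm{f}_2^2\le C_4(r^\beta\mathcal{E}(f)+V_h(r)^{-1}\norm{f}_1^2)$, and optimizing over $r$ with the volume comparison \eqref{e-vc} (case $r_*\le R$) or falling back on the tautological term $C_2R^{-\beta}\norm{f}_2^2$ (case $r_*>R$) produces \ref{N} directly; I checked that the exponents line up in both cases, and your argument needs neither the slicing lemmas nor $\alpha>\beta$. The only point worth tightening is the existence of the balancing scale: $V_h$ is merely non-decreasing, so $r\mapsto r^\beta V_h(r)$ may jump over the value $\norm{f}_1^2/\mathcal{E}(f)$; taking $r_*=\sup\{r: r^\beta V_h(r)\le \norm{f}_1^2/\mathcal{E}(f)\}$ and using \eqref{e-vd} to compare $V_h(r_*)$ with $V_h(2r_*)$ fixes this at the cost of a constant. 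In exchange for being less general than the BCLS route, your argument is shorter and self-contained; since the paper only ever uses \ref{N} (through Proposition \ref{p-ultra} and Davies' method), nothing downstream is lost.
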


\begin{proof}
 Since $\E(\abs{f}) \le \E (f)$, it suffices to show \ref{N} for all $f \in \F$.
 We fix $\alpha> \max(\beta, \log_2 C_D)$ where $C_D$ is from \eqref{e-vd}. By  \eqref{e-count}, \eqref{e-hom} and \eqref{e-vc} there exists $C_1>0$ such that
 \begin{equation} \label{e-n1}
  \norm{f_r}_\infty \le  C_1 \frac{R^\alpha}{V_h(R)} r^{-\alpha} \norm{f}_1
 \end{equation}
for all $f \in \F$ and for all $0<r \le R$. Set  $\tau = 1 + \frac{\beta}{2 \alpha}$ and let $\lambda >0$. We now consider two cases $\lambda$ small and $\lambda$ large.

If $\lambda \le 3 C_1 \norm{f}_1 / V_h(R)$, by Markov inequality
$ \lambda^2 \mu(f \ge \lambda) \le \norm{f}_2^2$,  we have
\begin{equation}\label{e-n2}
 \lambda^{2\tau} \mu(f \ge \lambda) \le \norm{f}_2^2 \left( \frac{3 C_1 \norm{f}_2}{V_h(R)}\right)^{\beta/\alpha}.
\end{equation}
for all $f \in \F$ and for all $\lambda \le 3 C_1 \norm{f}_1 / V_h(R)$.

Now suppose $\lambda > 3 C_1 \norm{f}_1/V_h(R)$. Choose $0 < r < R$ such that
\begin{equation} \label{e-n3}
 \left( \frac{r}{R}\right)^\alpha = \frac{3 C_1 \norm{f}_1}{\lambda V_h(R)}.
\end{equation}
By \eqref{e-n1}, we have $\norm{f_r}_\infty \le \lambda/3$. Therefore by union bound and Proposition \ref{p-pp}, we have
 \begin{align*}
\mu(f \ge \lambda) &\le  \mu\left( \abs{f - f_r} \ge \lambda/2 \right) + \mu \left( \abs{f_r} \ge \lambda/2 \right) \\
& =   \mu\left( \abs{f - f_r} \ge \lambda/2\right) \\
& \le  (2/\lambda)^2 \| f - f_r \|_2^2 \\
& \le  C_P (2 /\lambda)^2 r^\beta \E(f)
\end{align*}
 Substituting $\lambda$ from \eqref{e-n3} yields,
 \begin{equation}\label{e-n4}
  \lambda^{2\tau} \mu(f \ge \lambda) \le 4 C_P \left( \frac{3 C_1}{V_h(R)}\right)^{\beta/\alpha} R^\beta \E(f) \norm{f}_1^{\beta/\alpha}
 \end{equation}
for all $f \in \F$ and for all $\lambda > 3 C_1 \norm{f}_1/V_h(R)$.
Combining \eqref{e-n2} and \eqref{e-n4}, we obtain the following weak Sobolev-type inequality: there exist constants $C_2,C_3>0$ such that
\begin{equation} \label{e-ws}
\sup_{\lambda >0 } \lambda^{2\tau} \mu(f \ge \lambda) \le C_2 \left( \frac{R^\alpha}{V_h(R)}\right)^{\beta/\alpha} \left( \E(f) + C_3 R^{-\beta} \norm{f}_2^2 \right) \norm{f}_1^{\beta/\alpha}
\end{equation}
for all $f \in \F$ and for all $R>0$. Set
\[
\frac{1}{q}= \frac{1}{2} - \frac{\beta}{2 \alpha}.
\]
Since $\beta < \alpha$,  we have $q>0$.

Define the semi-norm on $\F$ by $N_R(f)= \left( \frac{R^\alpha}{V_h(R)}\right)^{\beta/2\alpha}\left(\E(f) + C_3 R^{-\beta} \norm{f}_2^2\right)^{1/2}$.  Note that
\begin{equation} \label{e-n5}
\frac{1}{\sqrt{2}}   \le  \frac{N_R(f)} {\left(\frac{R^\alpha}{V_h(R)}\right)^{\beta/2\alpha}\left( \sqrt{\E(f)} + \sqrt{C_3} R^{-\beta/2} \norm{f}_2 \right)} \le 1.
\end{equation}
Therefore by Lemmas 2.1 and 7.1 of \cite{BCLS} and Lemmas \ref{l-H1} and \ref{l-H2}, we have that there exists $\rho>0$ and constants $A_\infty, A_q >0$ such that
\begin{align*}
 N_R ( (f-t)^+ \wedge s ) &\le A_\infty N_R (f) \\
\left( \sum_{k \in \Z} N_R(f_{\rho,k})^q \right)^{1/q} &\le A_q N_R(f)
\end{align*}
for all $f \in \F$, for all $R >0$ and for all $s,t \ge 0$.  Hence by \eqref{e-ws}, \eqref{e-n5}, Theorem 3.1 and \cite[Proposition 3.5]{BCLS}, there exists constant $C_4>0$ such that
\begin{equation}
 \norm{ f} _r \le  \left( C_4 \left(\frac{R^{\alpha}}{V_h(R)} \right)^{\frac{\beta}{\alpha}} \left( \E(f) + C_3 R^{-\beta} \norm{f}_2^2\right)\right)^{\vartheta/2} \norm{f}_s^ {1 - \vartheta}
\end{equation}
for all $f \in \F$, for all $R>0$, for all $r,s \in (0,\infty)$ and for all $\vartheta \in (0,1)$ such that
\[
 \frac{1}{r}= \frac{\vartheta}{q} + \frac{1-\vartheta}{s}.
\]
In particular, the choice $r=2, \vartheta= \alpha/(\alpha+\beta), s=1$ yields the desired Nash inequality \ref{N}.
 \end{proof}

 We conclude this section with a diagonal estimate on $h_n$. We need the following standard lemma.
 \begin{lem}[Folklore] \label{l-nondec}
  For any $ x\in M$, the map $n \mapsto h_{2n}(x,x)$ is a non-increasing function of $n \in \N$. Further
  \begin{equation} \label{e-cs}
   h_{2n}(x,y) \le \left( h_{2n}(x,x) h_{2n}(y,y) \right)^{1/2}
  \end{equation}
 \end{lem}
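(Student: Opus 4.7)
The plan is to read both statements directly off the Chapman--Kolmogorov equation \eqref{e-ck}, the symmetry $h_n(x,y)=h_n(y,x)$, and the fact that $P$ is a contraction on $\ell^2(M,\mu)$.

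First I would prove the Cauchy--Schwarz-type bound \eqref{e-cs}. Using \eqref{e-ck} with $m=n$ and then the symmetry of $h_n$,
\[
h_{2n}(x,y)=\sum_{z\in M} h_n(x,z)\,h_n(z,y)\,\mu_z=\sum_{z\in M} h_n(x,z)\,h_n(y,z)\,\mu_z=\langle h_n(x,\cdot),h_n(y,\cdot)\rangle_{\ell^2(M,\mu)}.
\]
Applying the Cauchy--Schwarz inequality in $\ell^2(M,\mu)$ gives
\[
h_{2n}(x,y)\le \norm{h_n(x,\cdot)}_2\,\norm{h_n(y,\cdot)}_2,
\]
and the same identity applied with $y=x$ shows $\norm{h_n(x,\cdot)}_2^2=h_{2n}(x,x)$, which yields \eqref{e-cs}. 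Note that nonnegativity of $h_n$ makes the absolute value unnecessary.

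For the monotonicity statement, the key observation is that $h_{n+1}(x,\cdot)=Ph_n(x,\cdot)$, where $P$ acts on the second variable: indeed, for any $z\in M$,
\[
h_{n+1}(x,z)=\sum_{w\in M}h_n(x,w)\,J(w,z)\,\mu_w=\sum_{w\in M}J(z,w)\,h_n(x,w)\,\mu_w=(Ph_n(x,\cdot))(z),
\]
using the reversibility relation \eqref{e-reverse} to symmetrize $J$. Since $P$ is a Markov operator that is self-adjoint on $\ell^2(M,\mu)$, its operator norm satisfies $\norm{P}_{2\to 2}\le 1$ (a one-line consequence of $\sum_z p(x,z)=1$ and Cauchy--Schwarz, or of the spectral theorem together with $\norm{P}_{\infty\to\infty}\le 1$ and self-adjointness). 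Therefore
\[
h_{2(n+1)}(x,x)=\norm{h_{n+1}(x,\cdot)}_2^2=\norm{Ph_n(x,\cdot)}_2^2\le\norm{h_n(x,\cdot)}_2^2=h_{2n}(x,x),
\]
which is the claimed monotonicity.

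There is essentially no obstacle here; the only care needed is to identify $h_{n+1}(x,\cdot)$ as the image of $h_n(x,\cdot)$ under $P$ (rather than under some left action), for which the reversibility \eqref{e-reverse} is used. Everything else is Chapman--Kolmogorov plus $\ell^2$ Cauchy--Schwarz.
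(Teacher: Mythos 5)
Your proof is correct and is essentially the paper's argument: the paper writes $h_{2n}(x,x)\mu_x^2=\norm{P^n\ind_x}_2^2$ and uses that $P$ is a self-adjoint contraction plus Cauchy--Schwarz, which is the same computation as yours since $P^n\ind_x=\mu_x\,h_n(x,\cdot)$. The only cosmetic difference is that you phrase everything in terms of the kernel $h_n(x,\cdot)$ rather than $P^n\ind_x$.
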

\begin{proof}
 Let $\ind_x$ denote the indicator function at $x$ and $\langle \cdot , \cdot \rangle$ denote the inner product in $\ell^2(M,\mu)$. Since $P$ is self-adjoint, we have
 \[
  h_{2n}(x,x) \mu_x^2 = \langle P^{2n} \ind_x, \ind_x \rangle = \langle P^{n} \ind_x,  P^{n} \ind_x \rangle = \norm{ P^{n} \ind_x}_2^2
 \]
for all $n \in \N$ and for all $x \in M$. Since $P$ is a contraction, we have \[ \norm{ P^{n+1} \ind_x}_2 \le \norm{ P^{n} \ind_x}_2 \] for all $n \in N$ and for all $x \in M$.
Combining the two facts concludes the proof of the first assertion.

For the second statement, we use $P$ is self-adjoint along with Cauchy-Schwarz to obtain
\begin{align*}
 h_{2n}(x,y) \mu_x \mu_y &= \langle P^{2n} \ind_x, \ind_y \rangle = \langle P^n \ind_x ,P^n \ind_y\rangle \\
 &\le \norm{P^n \ind_x }_2 \norm{P^n \ind_y}_2 = \left( h_{2n}(x,x) h_{2n}(y,y) \right)^{1/2} \mu_x \mu_y
\end{align*}
for all $x,y \in M$.
\end{proof}

 \begin{theorem}\label{t-dub}
Let $(M,d,\mu)$ be a uniformly discrete, metric measure space satisfying \eqref{e-count},\eqref{e-vd} and \eqref{e-hom} and let $\mathcal{E}$ be a Dirichlet form whose jump kernel $J$ satisfies \ref{ujp} and \ref{ljp}.
Then there exists a constant $C>0$ such that
\begin{equation}
 \label{e-d-dub} h_n(x,y) \le \frac{C}{V_h(n^{1/\beta})}
\end{equation}
for all $n \in \N^*$ and for all $x,y \in M$.
\end{theorem}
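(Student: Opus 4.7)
The plan is to pass from the continuous-time on-diagonal estimate to the discrete-time one, using the monotonicity of $n \mapsto h_{2n}(x,x)$, and then use the Cauchy--Schwarz inequality to go off-diagonal. Under the hypotheses of the theorem, the Dirichlet form $\mathcal{E}$ satisfies the Nash inequality \ref{N} by Proposition \ref{p-nash}, so Proposition \ref{p-ultra} gives the bound $q_t(x,x) \le C_4/V_h(t^{1/\beta})$ for all $t>0$ and $x \in M$.

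I would then compare $h_{2n}(x,x)$ with $q_n(x,x)$ using \eqref{e-qdef}. Keeping only the even-indexed terms with $k \le n$ and invoking the monotonicity part of Lemma \ref{l-nondec}, which gives $h_{2k}(x,x) \ge h_{2n}(x,x)$ for $k \le n$, one obtains
\[
q_n(x,x) \;\ge\; \sum_{k=0}^{n} \frac{e^{-n} n^{2k}}{(2k)!}\, h_{2k}(x,x) \;\ge\; h_{2n}(x,x) \sum_{k=0}^{n} \frac{e^{-n} n^{2k}}{(2k)!}.
\]
The factor $\sum_{k=0}^{n} e^{-n} n^{2k}/(2k)!$ represents $\PP(N \le 2n,\ N \text{ even})$ for $N \sim \operatorname{Poisson}(n)$; since $\PP(N \text{ even}) \to 1/2$ and $\PP(N \le 2n) \to 1$ as $n \to \infty$, and the finitely many small values of $n$ can be checked by hand, this sum is bounded below by an absolute constant $c_0 > 0$. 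Hence $h_{2n}(x,x) \le c_0^{-1} C_4/V_h(n^{1/\beta})$ for all $n \ge 1$.

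To upgrade this to \eqref{e-d-dub} for every $n \in \N^*$ and every pair $(x,y)$, I would first handle parity by writing $n = 2m$ or $n = 2m+1$ and using the Cauchy--Schwarz-type inequality $h_{2n+1}(x,x) \le \sqrt{h_{2n}(x,x) h_{2n+2}(x,x)}$ (which follows from self-adjointness of $P$ exactly as in the proof of Lemma \ref{l-nondec}) to reduce odd indices to the even case; volume doubling \eqref{e-vd} then absorbs the loss from replacing $m^{1/\beta}$ by $n^{1/\beta}$. For the off-diagonal step, \eqref{e-cs} from Lemma \ref{l-nondec} yields $h_{2n}(x,y) \le (h_{2n}(x,x) h_{2n}(y,y))^{1/2}$, and the same Cauchy--Schwarz argument gives $h_{2n+1}(x,y) \le (h_{2n+2}(x,x) h_{2n}(y,y))^{1/2}$, combining to a uniform bound of the right form.

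The main obstacle is really just the Poisson-sum lower bound in the second step; all the hard analysis (Nash inequality, ultracontractivity, $\ell^2$-monotonicity) is already in place. Once the constant $c_0$ is pinned down, the rest of the argument is routine bookkeeping using the doubling property \eqref{e-vd} to compare $V_h(m^{1/\beta})$ with $V_h(n^{1/\beta})$ when $n$ and $m = \lfloor n/2 \rfloor$ differ by a factor of $2$.
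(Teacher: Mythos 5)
Your proposal is correct, and it follows the paper's overall skeleton (Nash inequality via Proposition \ref{p-nash}, ultracontractivity of $q_t$ via Proposition \ref{p-ultra}, a discrete/continuous comparison, then Cauchy--Schwarz to go off-diagonal), but it executes the comparison step and the parity reduction differently. The paper compares $h_n(x,x)$ for even $n$ with $q_{3n/4}(x,x)$ by concentrating the Poisson variable on $[n/2,n]$ via Chebyshev and then filling in the odd indices in that window using the ``laziness'' bound \eqref{e-1s}, which rests on $\inf_x h_1(x,x)\ge\kappa>0$, a consequence of \ref{ljp} at distance zero; the same bound \eqref{e-1s} is then reused to pass from even to odd $n$ both on and off the diagonal. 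You instead keep only the even Poisson indices $\le 2n$, so that the spectral monotonicity of $k\mapsto h_{2k}(x,x)$ from Lemma \ref{l-nondec} suffices, with the absolute lower bound $\PP(N\le 2n,\ N\ \mathrm{even})\ge c_0>0$ (which indeed holds: $\PP(N\ \mathrm{even})=(1+e^{-2n})/2$ and $\PP(N>2n)\le 1/n$, with $n=1,2$ checked directly); and you handle odd times by the interpolation $h_{2n+1}(x,y)\le (h_{2n+2}(x,x)h_{2n}(y,y))^{1/2}$, which follows from self-adjointness exactly as \eqref{e-cs} does. The net effect is that your version of this step uses only self-adjointness and positivity, not the pointwise lower bound on the diagonal of $J$ (though \ref{ljp} is still needed upstream for the Nash inequality), at the cost of a slightly more careful treatment of the base case $n=1$, where one should note that $V_h(1)$ is bounded above by a structural constant (since $B(x,r)=\{x\}$ for $r<a$ plus finitely many doublings) so that the square root appearing in $h_1(x,y)\le(h_2(x,x)h_0(y,y))^{1/2}$, or alternatively the direct bound from \ref{ujp}, still yields $C/V_h(1)$. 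Both routes then conclude with \eqref{e-vd} to absorb the factor-of-two discrepancies in the time variable.
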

 \begin{proof}
 We first consider the case $x=y$.
  By \ref{ljp} and \eqref{e-count}, there exists $\kappa > 0$ such that $\inf_{x \in M} h_1(x,x) \ge \kappa$. Therefore by Chapman-Kolmogorov equation \eqref{e-ck} and \eqref{e-count}
  \begin{equation} \label{e-1s}
   h_{n+1}(x,y) \ge h_n(x,y) \kappa \mu_y \ge h_n(x,y) \kappa /C_\mu.
  \end{equation}
  for all $n \in \N$ and for all $x,y \in M$.
By Chebyschev inequality
\begin{equation} \label{e-cheb}
 \PP \left(  \abs{ N(3n/4) - (3n/4) } < n/4  \right) \ge 1- \frac{12}{n},
\end{equation}
where $N(\cdot)$ denotes the standard Poisson process.
First consider the case where $n$ is even and $n \ge 20$.  By \eqref{e-qdef}, we have
\begin{align}
\nonumber q_{3n/4}(x,x) &= \sum_{k=0}^\infty h_k(x,x) \PP( N(3n/4) = k) \\
\nonumber  & \ge  \sum_{k=n/2}^n  h_k(x,x) \PP( N(3n/4) = k) \\
\nonumber  & \ge  \frac{\kappa}{C_\mu} h_n(x,x) \PP \left(  \abs{ N(3n/4) - (3n/4) } \le n/4  \right) \\
  & \ge \frac{2\kappa}{5 C_\mu} h_n(x,x) \label{e-dis-cont}
\end{align}
The third line above follows from first assertion of Lemma \ref{l-nondec} and \eqref{e-1s} and the last line follows from \eqref{e-cheb} and $n \ge 20$.
By Propositions \ref{p-nash} and \ref{p-ultra} along with \eqref{e-vc}, \eqref{e-dis-cont} there exists $C_1>0$ such that
\begin{equation}\label{e-nbigeven}
 h_n(x,x) \le \frac{C_1}{V_h(n^{1/\beta})}
\end{equation}
for all even $n \ge 20$.

The case $n$ is odd and $n \ge 19$ follows from \eqref{e-1s} and \eqref{e-nbigeven}. The case $n < 19$ follows from the observation that
\[
 \sup_{x} h_n(x,x) \le \sup_{x,y} h_1(x,y)
\]
for all $n \in \N$,  along with \ref{ujp}, \eqref{e-count} and \eqref{e-vc}. Combining all the cases, there exists $C_2>0$ such that
\begin{equation} \label{e-diag-ub1}
  h_n(x,x) \le \frac{C_2}{V_h(n^{1/\beta})}
\end{equation}
for all $x \in M$ and for all $n \in \N^*$.

By \eqref{e-cs} and \eqref{e-diag-ub1}, we have
\begin{equation*} \label{e-diag-ub2}
  h_n(x,y) \le \frac{C_2}{V_h(n^{1/\beta})}
\end{equation*}
for all $x,y \in M$ and for all $n \in \N^*$ and $n$ even.  If $n$ is odd, the desired estimate follows from \eqref{e-1s}.
 \end{proof}
\section{Upper bound on continuous time kernel}\label{s-ub}
In this section, we prove off-diagonal upper bound on $q_t$ using the method of \cite{BGK}.
As a consequence of this upper bound on $q_t$, we obtain estimates on hitting times and exit times for $X_n$.

The idea behind the approach of \cite{BGK} is to use Meyer's construction \cite{Mey} to split the jump kernel into small and large jumps and apply Davies' method for the smaller jumps (see \cite[Section 3]{BGK}).
We need the following estimates to show the upper bound on $q_t$.
\begin{lemma}\label{l-te} Let $(M,d,\mu)$ be a uniformly discrete, metric measure space satisfying \eqref{e-count},\eqref{e-vd} and \eqref{e-hom}.
 There exist constants $C_1,C_2 >0$ such that
 \begin{align}
  \label{e-te1} \sum_{y \in B(x,r)^c} \frac{\mu_y}{V_h(d(x,y)) d(x,y)^\beta} & \le  C_1 r^{-\beta} \\
  \label{e-te2} \sum_{y \in B(x,r)} d(x,y)^{2-\beta}\frac{\mu_y}{V_h(d(x,y))} &\le C_2 r^{2-\beta}
 \end{align}
for all $x \in M$, $r >0$ and $\beta \in (0,2)$.
\end{lemma}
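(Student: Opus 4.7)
\medskip
\noindent\textbf{Proof plan for Lemma \ref{l-te}.}

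The plan is to prove both inequalities by a standard dyadic annular decomposition, using the volume growth bounds \eqref{e-vd}, \eqref{e-hom} together with the trivial observation that on a dyadic annulus both $d(x,\cdot)$ and $V_h(d(x,\cdot))$ are essentially constant.

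For \eqref{e-te1}, I would partition the exterior $B(x,r)^c$ into the annuli $A_k := B(x,2^{k+1}r)\setminus B(x,2^k r)$ for $k\ge 0$. On $A_k$ every $y$ satisfies $d(x,y)>2^k r$, so by \eqref{e-hom} $V_h(d(x,y))\ge C_h^{-1}V_h(2^k r)$, and the volume bound gives
\[
\sum_{y\in A_k}\mu_y \;\le\; \mu(B(x,2^{k+1}r)) \;\le\; C_h V_h(2^{k+1}r) \;\le\; C_h C_D V_h(2^k r),
\]
using \eqref{e-vd}. Hence the contribution of $A_k$ is at most a constant times $(2^k r)^{-\beta}$, and summing the geometric series $\sum_{k\ge 0} 2^{-k\beta}$ (which converges because $\beta>0$) yields a bound of order $r^{-\beta}$.

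For \eqref{e-te2}, I would run the dyadic decomposition \emph{inwards}, writing $B(x,r)\setminus\{x\}$ as the union of the shells $\tilde A_k := B(x,2^{-k+1}r)\setminus B(x,2^{-k}r)$ for $k\ge 1$. The point $y=x$ may be discarded since $d(x,x)^{2-\beta}=0$ (using $\beta<2$). On $\tilde A_k$ one has $d(x,y)\le 2^{-k+1}r$ and $V_h(d(x,y))\ge C_h^{-1}V_h(2^{-k}r)$, while the volume of the shell is at most $C_h V_h(2^{-k+1}r)\le C_h C_D V_h(2^{-k}r)$ by \eqref{e-vd}. Consequently the contribution of $\tilde A_k$ is at most a constant multiple of $(2^{-k+1}r)^{2-\beta}$, and summing $\sum_{k\ge 1}2^{-k(2-\beta)}$ converges because $\beta<2$, producing a bound of order $r^{2-\beta}$. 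Uniform discreteness \eqref{e-count} causes no trouble: once $2^{-k}r<a/2$ the shell $\tilde A_k$ is empty, so only finitely many terms actually appear, which is in any case built into the convergent geometric series.

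There is no genuine obstacle here; the only minor subtlety is checking that \eqref{e-te2} is insensitive to the behavior at the pole $y=x$, which is immediate from $2-\beta>0$. All constants produced depend only on $C_h,C_D,\beta$ and the summation of two explicit geometric series, so one may take $C_1 = C_h C_D(1-2^{-\beta})^{-1}$ and $C_2 = 2^{2-\beta}C_h C_D(1-2^{\beta-2})^{-1}$.
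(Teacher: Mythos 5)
Your proof is correct and follows essentially the same route as the paper: a dyadic annular decomposition combined with the volume comparison \eqref{e-hom}, the doubling property \eqref{e-vd}, and summation of a geometric series convergent because $\beta>0$ (resp.\ $\beta<2$). The only cosmetic difference is that for \eqref{e-te2} the paper indexes the shells outward from the discreteness scale $a$ up to $r$ (so the increasing geometric series is dominated by its top term), whereas you index inward from $r$; the two are mirror images and both handle the pole at $y=x$ correctly.
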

\begin{proof}
For \eqref{e-te1}, observe that
 \begin{align*}
 \sum_{y \in B(x,r)^c} \frac{\mu_y}{V_h(d(x,y)) d(x,y)^\beta}  &\le \sum_{n=1}^{\infty} \sum_{y \in B(x,2^n r) \setminus B(x,2^{n-1}r)} \frac{\mu_y}{V_h(d(x,y)) d(x,y)^\beta} \\
&\le  \sum_{n=1}^\infty C_h \frac{V_h(2^n r)} { V_h(2^{n-1}r) \left( 2^{(n-1)}r \right)^{ \beta}}\\
&\le C_1 r^{-\beta}.
\end{align*}
We used \eqref{e-hom} in the second line above and \eqref{e-vd} in the last line. For  \eqref{e-te2}, note that
\begin{align*}
\lefteqn{\sum_{y \in B(x,r)} d(x,y)^{2-\beta} \frac{\mu_y}{V_h(d(x,y))}  }\\
&= \sum_{y \in B(x,r) \setminus B(x,a)} d(x,y)^{2-\beta} \frac{\mu_y}{V_h(d(x,y))} \\
&\le \sum_{n=1}^{\lceil \log_2 \left(r/a\right) \rceil} \sum_{y \in B(x,2^n a) \backslash B(x,2^{n-1}a)} d(x,y)^{2-\beta}\frac{\mu_y}{V_h(d(x,y))} \\
& \le   \sum_{n=1}^{\lceil \log_2\left( r/a \right) \rceil} C_h a^{2-\beta} \frac{2^{n(2-\beta)} V_h(2^n a )} { V_h(2^{n-1}a)}\\ \\
&\le C_2 r^{2-\beta}.
\end{align*}
In the second line above, we used that $(M,d,\mu)$ is uniformly discrete, in the fourth line we used  \eqref{e-hom} and in the last line we used \eqref{e-vd}.
\end{proof}
We now obtain the following off-diagonal estimate using Meyer's splitting of jump kernel and Davies' method as outlined in \cite{BGK}.
The main difference from \cite{BGK} is that the Nash inequality \ref{N} and volume growth we use are more general.
\begin{theorem} \label{t-ubq}
 Let $(M,d,\mu)$ be a uniformly discrete, metric measure space satisfying \eqref{e-count},\eqref{e-vd} and \eqref{e-hom} and
 let $\mathcal{E}$ be a Dirichlet form whose jump kernel $J$ with respect to $\mu$ satisfies \ref{ujp} and \ref{ljp} for some $\beta \in (0,2)$.
Then there exists $C>0$, such that the transition density $q_t$ satisfies
\begin{equation} \label{e-qub}
 q_t(x,y) \le C \left( \frac{1}{V_h(t^{1/\beta})} \wedge \frac{t}{V_h(d(x,y)) (1+d(x,y))^\beta} \right)
\end{equation}
for all $t >0$ and for all $x,y \in M$.
\end{theorem}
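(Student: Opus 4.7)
The plan is to run the Meyer--Davies scheme of \cite{BGK}, adapted to the uniformly discrete setting with general $V_h$. The on-diagonal bound $q_t(x,y) \le C/V_h(t^{1/\beta})$ is essentially free: Proposition \ref{p-ultra} gives $q_t(x,x) \le C/V_h(t^{1/\beta})$, and the continuous-time Cauchy--Schwarz $q_t(x,y) \le \sqrt{q_t(x,x) q_t(y,y)}$ (from self-adjointness of $P$ and the semigroup law, just as in Lemma \ref{l-nondec}) extends this to arbitrary $x,y$. The real content of the theorem is the off-diagonal bound $q_t(x,y) \le Ct/(V_h(d)(1+d)^\beta)$ in the regime $d := d(x,y) \gtrsim t^{1/\beta}$.

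Fix a truncation scale $K > 0$ and split $J = J^K + J_L^K$ with $J^K(x,y) = J(x,y) \ind_{d(x,y) \le K}$; write $\mathcal{E}^K$ and $q^K_t$ for the associated Dirichlet form and transition density. Since the pseudo-Poincar\'e inequality of Proposition \ref{p-pp} only uses values of $J(x,y)$ for $d(x,y) \le r$, it transfers to $\mathcal{E}^K$ for every scale $r \le K$; running the arguments of Proposition \ref{p-nash} and Proposition \ref{p-ultra} then yields $q^K_t(x,x) \le C/V_h(t^{1/\beta})$ uniformly in $K$, at least in the range $t \le K^\beta$. I would next apply Davies' perturbation to $q^K_t$: for a Lipschitz $\psi$ with $|\psi(y)-\psi(x)| \le d(x,y)$ and a parameter $\lambda > 0$, differentiating $\|e^{\lambda\psi} q^K_t f\|_2^2$ yields a Gronwall inequality governed by
\[
\Gamma(\lambda) \;:=\; \sup_x \sum_y J^K(x,y)\bigl(e^{\lambda(\psi(y)-\psi(x))}-1\bigr)^2 \mu_y.
\]
Because $J^K$ is supported on $d(x,y) \le K$ and $(e^u-1)^2 \le u^2 e^{2|u|}$, estimate \eqref{e-te2} of Lemma \ref{l-te} gives $\Gamma(\lambda) \le C \lambda^2 e^{2\lambda K} K^{2-\beta}$, and standard interpolation against the on-diagonal bound (via Chapman--Kolmogorov at time $t/2$) produces
\[
q^K_t(x,y) \;\le\; \frac{C}{V_h(t^{1/\beta})}\,\exp\!\Bigl(-\lambda d(x,y) + C t \lambda^2 e^{2\lambda K} K^{2-\beta}\Bigr).
\]

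Meyer's construction recovers $Y_t$ from the truncated process $Y^K_t$ by superimposing ``large'' jumps at Poisson rate $B_K(x) = \sum_{y:d(x,y)>K} J(x,y)\mu_y$, which by \eqref{e-te1} satisfies $\|B_K\|_\infty \le C K^{-\beta}$. Following \cite{BGK}, this produces a bound in which the large-jump contribution when $d(x,y) > K$ is controlled by $Ct/(V_h(d)(1+d)^\beta)$ via \ref{ujp}, while the no-large-jump piece is $q^K_t(x,y)$. It then remains to choose parameters: for $d > t^{1/\beta}$, take $K = t^{1/\beta}$ and $\lambda$ so that $\lambda K = c$ is a small fixed constant; the Davies exponent collapses to $-c\,d/t^{1/\beta} + O(1)$, and doubling \eqref{e-vc} converts the sub-Gaussian decay $V_h(t^{1/\beta})^{-1} e^{-c d/t^{1/\beta}}$ into a bound $\le Ct/(V_h(d) d^\beta)$ (the ratio $V_h(d)/V_h(t^{1/\beta})$ is polynomial in $d/t^{1/\beta}$ and is absorbed by the exponential). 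Combining with the Meyer term and the diagonal bound gives \eqref{e-qub}. The main obstacle is the bookkeeping: one must verify that Nash and pseudo-Poincar\'e transfer to $\mathcal{E}^K$ with $K$-independent constants in the range needed, that Davies' energy computation really produces $\Gamma(\lambda) \lesssim \lambda^2 e^{2\lambda K} K^{2-\beta}$, and that the two parameter choices $K$ and $\lambda$ can be made simultaneously so that both the sub-Gaussian profile and the Meyer large-jump term land on the target bound in every regime. The small-$t$ or small-$d$ ranges need only a short separate check, since there the claim reduces to $q_t \approx tJ$ and \ref{ujp}.
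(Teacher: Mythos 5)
Your overall architecture (Nash/ultracontractivity for the on-diagonal bound, then Meyer truncation plus Davies' perturbation for the off-diagonal bound) is the same as the paper's, and your energy estimate $\Gamma(\lambda)\lesssim (\lambda K)^2e^{2\lambda K}K^{-\beta}$ via \eqref{e-te2} is exactly the computation the paper performs. The gap is in the final parameter choice, and it is not mere bookkeeping: the two requirements you defer to the end are genuinely incompatible with $K=t^{1/\beta}$. The Meyer comparison you invoke (\cite[Lemma 3.1(c)]{BGK}) controls the large-jump contribution by $t$ times the \emph{supremum} of the discarded kernel $J-J^K$ over all pairs at distance $>K$, which by \ref{ujp} and \eqref{e-vc} is of order $t/\bigl(V_h(K)(1+K)^\beta\bigr)$. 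With $K=t^{1/\beta}$ this is $\asymp 1/V_h(t^{1/\beta})$ and has no decay in $d(x,y)$ at all; it is not bounded by $Ct/\bigl(V_h(d)(1+d)^\beta\bigr)$ when $d\gg t^{1/\beta}$. To make the Meyer error term land on the target one must take $K$ comparable to $d(x,y)$ (the paper takes $K=d(x,y)/\theta$ with $\theta=3(\alpha+\beta)/\beta$); keeping $K=t^{1/\beta}$ would require a pointwise, single-large-jump Duhamel analysis finer than the lemma you cite.

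Once $K\asymp d(x,y)$ is forced, your second choice $\lambda K=c$ small also fails: the Davies exponent becomes $-\lambda d + CtK^{-\beta}(\lambda K)^2e^{2\lambda K}=-c\theta+O(1)$, a bounded quantity, so no decay is extracted. The paper's resolution is to take $\lambda=\frac{1}{3K}\log(K^\beta/t)$, so that $e^{3\lambda K}=K^\beta/t$ keeps the perturbation term $O(1)$ while $-\lambda d(x,y)=-\frac{\alpha+\beta}{\beta}\log(K^\beta/t)$ produces the polynomial factor $(t/K^\beta)^{(\alpha+\beta)/\beta}$, which exactly cancels the Davies prefactor $K^\alpha t^{-\alpha/\beta}/V_h(K)$ and yields $q_t^K(x,y)\lesssim t/(V_h(K)K^\beta)\lesssim t/(V_h(d)(1+d)^\beta)$. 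So the off-diagonal bound is obtained as polynomial decay at truncation scale $\asymp d$, not as sub-exponential decay at scale $t^{1/\beta}$. The rest of your outline is sound: the on-diagonal bound via Proposition \ref{p-ultra} and Cauchy--Schwarz, the transfer of Nash to the truncated form (which the paper gets even more cheaply from $\mathcal{E}-\mathcal{E}_K\le 2C_2K^{-\beta}\|f\|_2^2$ together with the built-in $R^{-\beta}\|f\|_2^2$ term of \ref{N}, rather than by re-running the pseudo-Poincar\'e argument), and the reduction of the small-$d$ regime to the diagonal estimate.
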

\begin{proof}

By Lemma \ref{l-te} and \ref{ujp}, there exists $C_2,C_3 >0$ such that
\begin{eqnarray}
\label{e-qub2} \sum_{y \in B(x,r)^c} J(x,y) \mu_y &\le& C_2 r^{-\beta} \\
 \label{e-qub3} \sum_{y \in B(x,r)} d(x,y)^2 J(x,y)\mu_y &\le& C_3 r^{2-\beta}
\end{eqnarray}
for all $r >0$ and for all $x \in M$.

Let $J_K$ denote the jump density $J_K(x,y):= J(x,y) \ind_{d(x,y) \le K}$ and let $q^K_t(x,y)$ denote the corresponding transition density with respect to $\mu$.
Set $\E_K$, the corresponding Dirichlet form
\[
 \E_K(f,f) := \frac{1}{2} \sum_{x,y \in M} \abs{f(x) -f(y)}^2 J_K(x,y) \mu_x \mu_y.
\]
Note that
\begin{align}
\nonumber \E(f,f)-\E_K(f,f) &= \frac{1}{2} \sum_{x,y \in M: d(x,y) > K} \abs{f(x) -f(y)}^2 J(x,y) \mu_x \mu_y\\
\nonumber & \le  \sum_{x,y \in M: d(x,y) > K} \left( f(x)^2 +f(y)^2 \right) J(x,y) \mu_x \mu_y\\
\label{e-qub4} & \le  2 C_2 \norm{f}_2^2 K^{-\beta}.
\end{align}
In the last step above, we used symmetry of $J$ and \eqref{e-qub2}.
By Nash inequality (Proposition \ref{p-nash}) and \eqref{e-qub4}, there exists $\alpha,C_4,C_5>0$ such that
\[
 \norm{f}_2 \le C_4 \left(\left(\frac{K^{\alpha}}{V_h(K)}\right)^{\frac{\beta}{\alpha}}\mathcal{E}_K(f,f) + C_5 K^{-\beta} \|f\|_2^2  \right)^{\frac{\alpha}{2(\alpha+\beta)}} \| f \|_1^ {\frac{\beta}{\alpha+\beta}}
\]
for all $K>0$ and for all $f \in \ell^1(M,\mu)$. By Davies' method (\cite[Theorem 3.25]{CKS}) as described in \cite[Theorem 1.4]{BGK}, there exists a constant $C_6 >0$ such that
\begin{equation}\label{e-qub5}
 q_t^K(x,y) \le C_6 \frac{K^\alpha}{V_h(K)} t^{-\alpha/\beta} \exp \left( C_5 t K^{-\beta} - E_K(2t,x,y) \right)
\end{equation}
for all $x,y \in M$, for all $t >0$ and for all $K >0$, where $E_K$ is given by
\begin{align*}
 \Gamma_K(\psi)(x)&= \sum_{y \in M} \left(e^{(\psi(x) -\psi(y))}-1\right)^2 J_K(x,y) \mu_y,\\
\Lambda_K(\psi)^2&=  \norm{\Gamma_K(\psi)}_\infty \vee \norm{\Gamma_K(-\psi)}_\infty, \\
E_K(t,x,y) &= \sup \{ \abs{\psi(x)-\psi(y)}-t \Lambda_K(\psi)^2 : \psi \in C_c(M) \}.
\end{align*}
By Proposition \ref{p-ultra} and Cauchy-Schwarz inequality \[q_t(x,y) \le \left( q_t(x,x) q_t(y,y)\right)^{1/2},\] it suffices to show that there exists $C_1>0$ such that
\begin{equation}
\label{e-qub1} q_t(x,y) \le C_1 \frac{t}{V_h(d(x,y)) (1+ d(x,y))^{\beta}}
\end{equation}
for all $x,y \in M$ such that $d(x,y)^\beta \ge \theta^\beta t$ where $\theta= 3(\alpha+\beta)/ \beta$.

Let $x,y \in M$ be such that $K^\beta \ge t$ where $K=d(x,y)/\theta$.
Define $\psi(z) = \lambda( d(x,y) - d(x,z))_+$.
Using $\abs{e^t -1}^2 \le t^2 e^{2\abs{t}}$, $\abs{\psi(z)-\psi(w)} \le \lambda d(w,x)$  and \eqref{e-qub3}, we get
\begin{align*}
\Gamma_K(e^\psi)(z) &= \sum_{w \in B(z,K)} \left( e^{\psi(z)-\psi(w)} -1\right)^2J_K(z,w) \mu_w  \\
&\le e^{2 \lambda K} \lambda^2 \sum_{w \in B(z,K)} d(z,w)^2 J_K(z,w) \mu_w\\
& \le  C_3 (\lambda K )^2 e^{2 \lambda K} K^{-\beta} \\
&\le  C_3  e^{3 \lambda K} K^{-\beta}
\end{align*}
for all $z \in M$ and for all $\lambda,K>0$.
It follows that
\[
 -E_K(2t,x,y) \le -\lambda d(x,y) + 2 C_3 t e^{3 \lambda K} K^{-\beta}.
\]
We fix
\[
 \lambda= \frac{1}{3K} \log \left(\frac{K^\beta}{t} \right),
\]
so that
\begin{align*}
-E_K(2t,x,y) &\le \frac{-d(x,y)}{3K} \log \left(\frac{ K^\beta}{t} \right) + 2 C_3 t K^{-\beta} \frac{K^\beta}{t} \\
&\le 2C_3 - \left( \frac{\alpha +\beta}{\beta} \right) \log \left( \frac{K^\beta}{t} \right).
\end{align*}
By \eqref{e-qub5} and $K^\beta \ge t$,
\begin{align}
 \nonumber q_t^K(x,y) &\le C_6 \frac{K^\alpha}{V_h(K)} t^{-\alpha/\beta} \exp\left( C_5 +2 C_3 \right) \left( \frac{t}{K^\beta}\right)^{(\alpha+\beta)/\beta} \\
\label{e-qub6} &\le C_7 \frac{t}{V_h(K) K^\beta} \le C_8 \frac{t}{V_h(d(x,y)) (1+d(x,y))^{1+\beta}}
\end{align}
In the last step, we used that $d(x,y) \ge a$, $K=d(x,y)/\theta$ and \eqref{e-vc}. By \ref{ujp}, $d(x,y) >a$ and \eqref{e-vc}, there exists $C_9>0$ such that
\[
J(x,y)-J_K(x,y) \le C_9  \frac{1}{ V_h(d(x,y)) (1+d(x,y))^\beta}
\]
for all $x,y \in M$.
Therefore by \cite[Lemma 3.1(c)]{BGK}, there exists $C_1>0$ such that
\[
 q_t(x,y) \le C_{1} \frac{t}{V_h(d(x,y)) (1+d(x,y))^\beta}
\]
for all $x,y \in M$, for all $t>0$  such that $d(x,y)^\beta \ge \theta^\beta t$ which proves \eqref{e-qub1} and hence \eqref{e-qub}.
\end{proof}
\subsection{Exit time and Hitting time estimates}
 In this subsection, we  apply Theorem \ref{t-ubq} to estimate hitting time and exit time of balls for the discrete time Markov chain $X_n$ and the corresponding continuous time chain $Y_t$.
 \begin{notn}
  We denote  exit time and hitting time of the ball $B(x,r)$ by
  \begin{align*}
   \tau_X(x,r) &= \inf \{ k : X_k \notin B(x,r) \} \\
    \tau_Y(x,r) &= \inf \{ t : Y_t \notin B(x,r) \} \\
    T_X(x,r) &= \inf \{ k : X_k \in B(x,r) \} \\
    T_Y(x,r) &= \inf \{ t : Y_t \in B(x,r) \}
  \end{align*}
for all $x \in M$ and for all $r>0$.
 \end{notn}
We start with exit and hitting time estimates for continuous time Markov chain $Y_t$.
 \begin{prop}
  \label{p-exY} Under the assumptions of Theorem \ref{t-ubq}, there exists $C_1>0$ such that
  \[
   \PP^x \left( \tau_Y(x,r) \le t \right) \le C_1 \frac{t}{r^\beta}
  \]
for all $x \in M$ and for all $t,r >0$.
 \end{prop}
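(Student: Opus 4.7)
The plan is to derive the exit time bound from the off-diagonal heat kernel upper bound in Theorem \ref{t-ubq} via a standard L\'evy-type inequality. First I would observe that bounding $q_s(z,y)$ by the off-diagonal factor in \eqref{e-qub} and applying the tail estimate \eqref{e-te1} of Lemma \ref{l-te} (with $(1+d(z,y))^\beta \ge d(z,y)^\beta$ for $d(z,y)>r/2>0$) gives
\begin{equation*}
\PP^z(Y_s \notin B(z, r/2)) = \sum_{y:\, d(z,y) > r/2} q_s(z,y)\, \mu_y \le C' \frac{s}{r^\beta}
\end{equation*}
for some $C' > 0$ depending only on the constants from Theorem \ref{t-ubq} and Lemma \ref{l-te}, uniformly in $z \in M$ and $s, r > 0$.

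Next, write $\tau = \tau_Y(x,r)$. On the event $\{\tau \le t\}$ one has $d(x, Y_\tau) > r$, so whenever $d(Y_\tau, Y_{2t}) < r/2$ the triangle inequality forces $Y_{2t} \notin B(x, r/2)$. Applying the strong Markov property at $\tau$ (valid since $Y$ is obtained from the discrete-time chain $X$ by subordination with an independent Poisson process), I would deduce
\begin{equation*}
\PP^x\bigl(Y_{2t} \notin B(x, r/2)\bigr) \ge \EE^x\!\left[ \mathbf{1}_{\tau \le t}\, \PP^{Y_\tau}\bigl( Y_{2t - \tau} \in B(Y_\tau, r/2)\bigr)\right].
\end{equation*}
Since $2t - \tau \in [t, 2t]$ on $\{\tau \le t\}$, the estimate from the previous paragraph bounds the inner probability below by $1 - 2 C' t / r^\beta$.

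To finish, I would split into two regimes. If $r^\beta \ge 4 C' t$, the inner probability is at least $1/2$, so combining with the first step applied with $z=x$ and $s = 2t$,
\begin{equation*}
\PP^x(\tau \le t) \le 2 \PP^x\bigl(Y_{2t} \notin B(x, r/2)\bigr) \le \frac{4 C' t}{r^\beta}.
\end{equation*}
In the complementary regime $r^\beta < 4 C' t$, the inequality $\PP^x(\tau \le t) \le 1 \le 4 C' t / r^\beta$ is trivial. Taking $C_1 = 4 C'$ completes the argument.

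The heavy lifting has already been done in Theorem \ref{t-ubq} and Lemma \ref{l-te}; the present argument is essentially bookkeeping. I do not anticipate any real obstacle, the only point requiring any thought being the L\'evy-type reduction via the strong Markov property, which is standard for this continuous-time chain.
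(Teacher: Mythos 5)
Your proposal is correct and follows essentially the same route as the paper: the tail bound $\PP^z(Y_s\notin B(z,r/2))\le C's/r^\beta$ from Theorem \ref{t-ubq} and \eqref{e-te1}, followed by the strong Markov property at $\tau$ applied to the position at time $2t$. The only difference is cosmetic: the paper bounds $\PP^x(\tau\le t,\,d(Y_{2t},Y_\tau)\ge r/2)$ from above directly by a supremum, while you lower-bound the complementary inner probability and divide, which forces your (harmless) extra case split on $r^\beta$ versus $4C't$.
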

\begin{proof}
 By Theorem \ref{t-ubq}, there exists $C_2>0$ such that
 \begin{align}
\PP^x \left( d(Y_t,x) \ge r \right) &= \sum_{y \in B(x,r)^c} q_t(x,y) \mu_y \nonumber \\
&\le Ct \sum_{y \in B(x,r)^c} \frac{\mu_y}{V_h(d(x,y))d(x,y)^{\beta} } \nonumber \\
 \label{c1} & \le  C_2 \frac{t}{r^\beta}
\end{align}
for all $x \in M$ and for all $r,t >0$.
The last line follows \eqref{e-te1} of Lemma \ref{l-te}.

Set $\tau=\tau_Y(x,r)$. There exists $C_1>0$ such that
\begin{align*}
\mathbb{P}^x(\tau \le t) &\le \mathbb{P}^x\left( \tau \le t , d(Y_{2t},x) \le r/2 \right) + \mathbb{P}^x \left( d(Y_{2t},x) > r/2\right) \\
&\le \mathbb{P}^x\left( \tau \le t, d(Y_{2t},Y_\tau) \ge r/2 \right) + 8 C_2 t/r^\beta \\
&= \mathbb{E}^x \left( \ind_{\tau \le t} \mathbb{P}^{Y_\tau} \left( d (Y_{2t-\tau},Y_0) \ge r/2 \right) \right) + 8C_2 t /r^\beta \\
& \le \sup_{y \in B(x,r)^c} \sup_{s \le t} \mathbb{P}^y\left( d (Y_{2t -s },y) \ge r/2 \right) + 8 C_2 t / r^\beta \\
& \le  C_1 t/r^\beta
\end{align*}
for all $x \in M$ and for all $r,t> 0$.
The second and fifth lines follow from (\ref{c1}) and the third line above follows from strong Markov property.
\end{proof}
Similarly, we have the following estimate for the hitting time $T_Y$.
\begin{lem} \label{l-hitY}
Under the assumptions of Theorem \ref{t-ubq}, there exists $C_1>0$ such that
 \[
  \PP^x \left( T_Y(y, t^{1/\beta}) \le t \right) \le C_1 \left( \frac{t V_h(t^{1/\beta})}{V_h(d(x,y)) d(x,y)^\beta} \right)
 \]
for all $x,y \in M$ and for all $t>0$.
\end{lem}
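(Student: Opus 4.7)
The plan is to reduce the hitting-time event to an ``endpoint'' event $\{Y_{2t}\in B(y,R)\}$ for a conveniently chosen radius $R=\Lambda t^{1/\beta}$, and then to control the latter using the off-diagonal upper bound of Theorem~\ref{t-ubq}. This mirrors the strategy used in the proof of Proposition~\ref{p-exY}: once the process hits the target ball, a matching exit-time estimate shows that it has a uniformly positive chance of still being ``nearby'' at time $2t$, so the endpoint probability is comparable to the hitting probability.

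First I would dispose of the near regime $d(x,y)\le 2\Lambda t^{1/\beta}$ separately: in this case, iterated volume doubling \eqref{e-vd} gives $V_h(d(x,y))\,d(x,y)^\beta \le C\,t\,V_h(t^{1/\beta})$, so the claimed right-hand side already exceeds a positive absolute constant and the bound holds trivially after enlarging $C_1$, since probabilities are at most $1$.

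In the far regime $d(x,y)>2\Lambda t^{1/\beta}$, set $r=t^{1/\beta}$ and $T=T_Y(y,r)$. Using Proposition~\ref{p-exY}, fix $\Lambda\ge 2$ large enough that $\sup_{z\in M}\PP^z(\tau_Y(z,\Lambda r)\le 2t)\le 1/2$. If $T\le t$, then $Y_T\in B(y,r)$, so $B(Y_T,\Lambda r)\subseteq B(y,(\Lambda+1)r)$, and $2t-T\le 2t$. Invoking the strong Markov property at time $T$ together with the exit-time bound yields
\[
\PP^x\bigl(Y_{2t}\in B(y,(\Lambda+1)r)\bigr) \ge \EE^x\bigl[\ind_{T\le t}\,\PP^{Y_T}\bigl(\tau_Y(Y_0,\Lambda r)>2t-T\bigr)\bigr] \ge \tfrac12\,\PP^x(T\le t).
\]

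To bound the left-hand side from above, observe that every $z\in B(y,(\Lambda+1)r)$ satisfies $d(x,z)\ge d(x,y)/2$ by the far-regime assumption. Theorem~\ref{t-ubq} combined with \eqref{e-vd} then yields $q_{2t}(x,z)\le C'\,t/(V_h(d(x,y))\,d(x,y)^\beta)$; summing over $B(y,(\Lambda+1)r)$ and using $\mu(B(y,(\Lambda+1)r))\le C\,V_h(t^{1/\beta})$ (from \eqref{e-hom} and \eqref{e-vd}) produces the desired upper bound. The only mildly delicate point is calibrating $\Lambda$ so that the exit-time probability lies below $1/2$, which is immediate from Proposition~\ref{p-exY}; beyond that, the argument is essentially routine once the off-diagonal estimate and the exit-time estimate are in hand.
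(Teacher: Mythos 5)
Your argument is correct and essentially identical to the paper's: both reduce the hitting event, via the strong Markov property and the exit-time bound of Proposition \ref{p-exY}, to the event that the process lies in a ball of radius comparable to $t^{1/\beta}$ around $y$ at a deterministic time, and then sum the off-diagonal bound of Theorem \ref{t-ubq} over that ball (the paper evaluates at time $t$ rather than $2t$, an immaterial difference). The only nit is that your far-regime threshold should be $d(x,y)>2(\Lambda+1)t^{1/\beta}$ rather than $2\Lambda t^{1/\beta}$ so that $d(x,z)\ge d(x,y)/2$ actually holds for all $z\in B(y,(\Lambda+1)t^{1/\beta})$; the near-regime argument absorbs this adjustment without change.
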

\begin{proof}
 By Proposition \ref{p-exY}, there exists $C_2>0$ such that
 \begin{equation}\label{e-hY1}
  P^z\left( \tau_Y(z, C_2 t^{1/\beta}) > t\right) \ge \frac{1}{2}
 \end{equation}
for all $z \in M$ and for all $t>0$. By \eqref{e-vc}, it suffices to consider the case $ d(x,y) > 2(1+C_2)t^{1/\beta} $.

Set $S=T_Y(y, t^{1/\beta})$.
By \eqref{e-hY1} and strong Markov property,
\[
 \PP^x\left(S \le t, \sup_{S \le k \le t+ S} d(Y_k,Y_S) \le C_2 t^{1/\beta} \right) \ge \frac{1}{2} \PP^x(S \le t)
\]
for all $x,y \in M$ and $t>0$.
Therefore
\begin{align*}
\mathbb{P}^x(S \le t) &\le 2 \mathbb{P}^x \left(S\le t , \sup_{S \le k \le S+t }  \left| Y_k - Y_S\right| \le C_2 t^{1 / \beta}  \right) \\
& \le  2\mathbb{P}^x \left( Y_{t} \in B(y,(1+C_2) t^{1 / \beta}) \right) \\
&= 2\sum_{z \in B(y,(1+C_2) t^{1 / \beta}) } q_{t} (x, z) \mu_z \\
& \le  2C t \sum_{ z\in B(y,(1+C_2) t^{1 / \beta}) } \frac{1}{ V_h(d(x,z)) d(x,z)^{\beta}} \mu_z \\
& \le  2C t \sum_{z \in B(y,(1+C_2) t^{1 / \beta}) } \frac{1}{ V_h(d(x,y)/2) (d(x,y) /2)^{\beta}}\mu_z \\
& \le  2^{1+\beta} C t   \frac{V_h( (1+C_2) t^{1/ \beta})}{V_h(d(x,y)/2) d(x,y)^{\beta}} \\
& \le  C_1  \frac{t  V_h(  t^{1/ \beta})}{ d(x,y)^{\beta} V_h(d(x,y))} \\
\end{align*}
for all $x ,y \in M$ and for all $t>0$.
The fourth line above follows from Theorem \ref{t-ubq}. The fifth line follows from $d(x,z) \ge d(x,y)/2$ which is a consequence of  $ (1+C_2) t^{1 / \beta} \le \frac{d(x,y)}{2}$ and triangle inequality.
 The last line follows from \eqref{e-vc}.
\end{proof}
Now we prove similar estimates for $X_n$. The strategy is to compare the behavior of $X_n$ with $Y_t$ using the equation $Y_t=X_{N(t)}$, where $N(t)$ is a standard Poisson process independent of $(X_k)_{k \in \N}$.
Define $T_k$ as the arrival times of Poisson process defined by  $N(t)=k$ for all $t \in [T_k,T_{k+1})$ and for all $k \in \N$. Then $T_k$ is an exponential random variable with mean $k$ and independent of $(X_n)_{n \in \N}$.
\begin{prop}\label{p-hitx}
Under the assumptions of Theorem \ref{t-ubq}, there exists $C_1>0$  such that
\[
 \PP^x \left( T_X(y,n^{1/\beta}) \le n \right) \le C_1 \frac{ n V_h (n^{1/\beta})}{ V_h(d(x,y))(1+d(x,y))^\beta}
\]
for all $ n \in \N$ and for all $x,y \in M$ with $x \neq y$.
\end{prop}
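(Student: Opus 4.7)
The plan is to derive this discrete-time hitting bound from its continuous-time counterpart Lemma \ref{l-hitY} via the Poisson subordination $Y_t = X_{N(t)}$, exploiting the independence of the arrival times $(T_k)_{k \in \N}$ of $N$ from the discrete chain $(X_k)_{k \in \N}$.

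Writing $\sigma := T_X(y, n^{1/\beta})$, the key pathwise observation is that on $\{\sigma \le n\}$ we have $X_\sigma \in B(y, n^{1/\beta})$, and since $Y_{T_\sigma} = X_{N(T_\sigma)} = X_\sigma$, this forces $T_Y(y, n^{1/\beta}) \le T_\sigma \le T_n$. The event $\{\sigma \le n\}$ is $(X_0, \ldots, X_n)$-measurable and hence independent of $T_n$, so I obtain the factorization
\[
\PP^x\bigl(T_X(y, n^{1/\beta}) \le n\bigr)\, \PP(T_n \le 2n) \;=\; \PP^x(\sigma \le n,\ T_n \le 2n) \;\le\; \PP^x\bigl(T_Y(y, n^{1/\beta}) \le 2n\bigr).
\]
Since $T_n$ is a sum of $n$ i.i.d.\ $\mathrm{Exp}(1)$ variables, with mean and variance both $n$, Chebyshev gives $\PP(T_n \le 2n) \ge 1 - 1/n \ge 1/2$ for $n \ge 2$, while $\PP(T_1 \le 2) = 1 - e^{-2} > 1/2$, so this factor is bounded below by a positive absolute constant for all $n \ge 1$.

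To control $\PP^x(T_Y(y, n^{1/\beta}) \le 2n)$, the radius--time scales do not match Lemma \ref{l-hitY} exactly, but the monotonicity $B(y, n^{1/\beta}) \subset B(y, (2n)^{1/\beta})$ yields $T_Y(y, n^{1/\beta}) \ge T_Y(y, (2n)^{1/\beta})$, so applying Lemma \ref{l-hitY} with $t = 2n$ gives
\[
\PP^x\bigl(T_Y(y, n^{1/\beta}) \le 2n\bigr) \;\le\; \PP^x\bigl(T_Y(y, (2n)^{1/\beta}) \le 2n\bigr) \;\le\; C\, \frac{2n\, V_h((2n)^{1/\beta})}{V_h(d(x,y))\, d(x,y)^\beta}.
\]
Volume doubling \eqref{e-vd} absorbs $V_h((2n)^{1/\beta})$ into a constant times $V_h(n^{1/\beta})$, and uniform discreteness ($d(x,y) \ge a$) makes $d(x,y)^\beta$ comparable to $(1+d(x,y))^\beta$. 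Combining all the above gives the claimed estimate for $n \ge 1$; the case $n = 0$ is immediate since $\PP^x(T_X(y, 0) \le 0) = \PP^x(X_0 = y) = 0$ when $x \neq y$.

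The main delicacy is purely the scaling mismatch between the target (ball radius $n^{1/\beta}$, time horizon $n$) and the available continuous-time lemma (where the radius $t^{1/\beta}$ is tied to the time $t$). Enlarging the ball by a factor $2^{1/\beta}$ and invoking doubling once resolves this painlessly, so the essential content of the proof is the independence-based factorization together with the standard Chebyshev bound on $T_n$.
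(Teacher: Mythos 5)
Your proposal is correct and follows essentially the same route as the paper: reduce to the continuous-time hitting estimate of Lemma \ref{l-hitY} via the independence of $(X_k)$ from the Poisson arrival time $T_n$, bound $\PP(T_n\le 2n)$ from below by a constant, and absorb the resulting $(2n)^{1/\beta}$ scales using \eqref{e-vc} and uniform discreteness. The only differences are cosmetic: the paper uses Markov's inequality where you use Chebyshev, and you make explicit the ball-enlargement step $T_Y(y,n^{1/\beta})\ge T_Y(y,(2n)^{1/\beta})$ that the paper leaves implicit.
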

\begin{proof}
 It suffices to consider the case $n \ge 1$. By Markov inequality $\PP(T_n > 2n) \le 1/2$. Therefore by independence of  $(X_n)_{n \in \N}$ and the arrival time $T_n$, we have
 \begin{align*}
  \frac{1}{2} \PP^x ( T_X(y,n^{1/\beta}) \le n ) &\le \PP^x \left( T_X(y,n^{1/\beta}) \le n , T_n \le 2n\right) \\
& \le  \PP^x \left( T_Y(y,n^{1/\beta}) \le 2n\right)  \\
& \le  C_2 \frac{ 2n V_h( (2n)^{1/\beta})}{d(x,y)^\beta V_h(d(x,y))}
 \end{align*}
 for all $x,y \in M$ with $x \neq y$ and for all $ n \in \N^*$.
The last line above follows from Lemma \ref{l-hitY}. The conclusion then follows from \eqref{e-vc}.
\end{proof}
We conclude the section, with an exit time estimate for $X_n$.
\begin{prop} \label{p-exitX}
 Under the assumptions of Theorem \ref{t-ubq}, there exists $\gamma>0$ such that
 \begin{equation}\label{e-exitX}
   \PP^x\left( \max_{0\le k \le \lfloor \gamma r^\beta \rfloor} d(X_k,x) > r/2 \right) \le 1/4
 \end{equation}
for all $x \in M$ and for all $r >0$.
\end{prop}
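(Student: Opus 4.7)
The plan is to transfer the exit-time estimate for the continuous-time chain $Y_t$ from Proposition \ref{p-exY} to the discrete chain $X_k$ via the Poisson subordination $Y_t = X_{N(t)}$.

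Set $n := \lfloor \gamma r^\beta \rfloor$. If $\gamma r^\beta < 1$, then $n=0$ and the maximum in question equals $d(X_0,x)=0\le r/2$, so the probability is $0$ and there is nothing to do. Thus we may assume $n\ge 1$. Let $T_n$ denote the $n$-th arrival time of the Poisson process $N(\cdot)$, which is independent of $(X_k)_{k\in\N}$ and satisfies $Y_{T_k}=X_k$. The key observation is the inclusion
\[
\Big\{ \max_{0\le k\le n} d(X_k,x) > r/2,\ T_n\le t \Big\} \;\subset\; \{\tau_Y(x,r/2)\le t\},
\]
because on the left-hand event there is some $k\le n$ with $X_k=Y_{T_k}\notin B(x,r/2)$ and $T_k\le T_n\le t$. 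Consequently,
\[
\PP^x\Big( \max_{0\le k\le n} d(X_k,x) > r/2 \Big) \;\le\; \PP^x(\tau_Y(x,r/2)\le t) \;+\; \PP(T_n>t).
\]

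I will take $t=\lambda n$ for a fixed, sufficiently large absolute constant $\lambda>1$ to be chosen. For the first term, Proposition \ref{p-exY} gives
\[
\PP^x(\tau_Y(x,r/2)\le \lambda n) \;\le\; C_1\,\lambda n\,(r/2)^{-\beta} \;\le\; C_1\,2^{\beta}\,\lambda\,\gamma,
\]
since $n\le \gamma r^\beta$. For the second term, writing $T_n=\sum_{i=1}^n E_i$ with $E_i$ i.i.d.\ $\mathrm{Exp}(1)$ and applying the standard Chernoff bound yields $\PP(T_n>\lambda n)\le (\lambda e^{1-\lambda})^n$, which is at most $\lambda e^{1-\lambda}$ for every $n\ge 1$ and tends to $0$ as $\lambda\to\infty$.

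Therefore, I first fix $\lambda$ large enough that $\lambda e^{1-\lambda}\le 1/8$, and then choose $\gamma>0$ small enough that $C_1\,2^{\beta}\lambda\gamma\le 1/8$. With these choices the two terms sum to at most $1/4$, uniformly in $x\in M$ and $r>0$, establishing \eqref{e-exitX}. There is no genuinely hard step here; the only point requiring a little care is handling all $n\ge 1$ simultaneously (including small $n$), which is why I prefer the exponential Chernoff bound for $T_n$ over Chebyshev's inequality (the latter would only control the tail of $T_n$ for large $n$).
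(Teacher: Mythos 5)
Your proof is correct and follows the same route as the paper's: decompose on whether the Poisson arrival time $T_n$ exceeds a fixed multiple of $n$, bound the first piece by the continuous-time exit estimate of Proposition \ref{p-exY}, and control the arrival-time tail. The only difference is technical polish: the paper bounds $\PP(T_n - n > n)$ by Chebyshev, yielding a $1/n$ error that forces a separate treatment of small $r$ (introducing an auxiliary $\gamma_1$ and shrinking $\gamma$), whereas your Chernoff bound $(\lambda e^{1-\lambda})^n$ is uniform over all $n\ge 1$ and lets you avoid that case split entirely.
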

\begin{proof}
Choose $\gamma_1>0$ such that $2^{\beta+1} C_1 \gamma_1 = 1/8$, where $C_1$ is the constant from Proposition \ref{p-exY}. By Proposition \ref{p-exY},
\[
 \PP^x\left( \sup_{s \le 2 \gamma_1 r^\beta} d(Y_s,x) > r/2\right) = \PP^x \left( \tau_Y(x,r/2) \le 2\gamma_1 r^\beta \right) \le 2^{\beta+1} C_1 \gamma_1 \le 1/8.
\]
for all $x \in M$ and for all $r>0$.
 Therefore
\begin{eqnarray}
\nonumber \lefteqn{\mathbb{P}^x \left( \max_{s \le \lfloor \gamma_1 r^\beta \rfloor} d(X_k,x) > r/2\right) } \\
\nonumber &=&  \mathbb{P}^x \left( \max_{s \le \lfloor \gamma_1 r^\beta \rfloor} d(X_k,x) > r/2,T_{\lfloor \gamma_1 r^\beta \rfloor}  \le 2 \lfloor \gamma_1 r^\beta \rfloor   \right) \\
\nonumber& & + \mathbb{P}^x \left( \max_{s \le \lfloor \gamma_1 r^\beta \rfloor} d(X_k,x) > r/2,T_{\lfloor \gamma_1 r^\beta \rfloor}  > 2 \lfloor \gamma_1 r^\beta \rfloor   \right) \\
\nonumber &\le& \mathbb{P}^x \left( \sup_{s \le 2 \gamma_1 r^\beta} d(Y_s,x) > r/2 \right) \\
\nonumber &&+ \mathbb{P}^x \left( T_{\lfloor \gamma_1 r^\beta \rfloor}  - \lfloor \gamma_1 r^\beta \rfloor > \lfloor \gamma_1 r^\beta \rfloor \right) \\
\label{e-exitX1} & \le & \frac{ 1}{8} + \frac{1}{\gamma_1 r^\beta}
\end{eqnarray}
for all $x \in M$ and for all $r >0$.
In the last line above, we used Markov's inequality $P(\left| X\right| >a ) \le \frac{\mathbb{E}X^2}{a^2}$ for $X=T_{\lfloor \gamma_1 r^\beta \rfloor}  - \lfloor \gamma_1 r^\beta \rfloor$.
Fix $r_0$ so that $\gamma_1 r_0^\beta = 8$ and choose $\gamma \in (0, \gamma_1/8)$, so that $\gamma r_0^\beta <1$.

If $r < r_0$ , then
$\gamma r^\beta <1$ and $\mathbb{P}^x \left( \max_{s \le \lfloor \gamma r^\beta \rfloor} d(X_k,x) > r/2\right) =0 \le 1/4$.

If $r \ge r_0$, then by \eqref{e-exitX1}
\[
\mathbb{P}^x \left( \max_{s \le \lfloor \gamma r^\beta \rfloor} d(X_k,x) > r/2\right) \le  \mathbb{P}^x \left( \max_{s \le \lfloor \gamma_1 r^\beta \rfloor} d(X_k,x) > r/2\right) \le \frac{1}{4}.
\]
Combining the cases $r<r_0$ and $r \ge r_0$ gives the desired result.
\end{proof}
\section{Parabolic Harnack inequality}\label{s-phi}
In this section, we follow an iteration argument due to Bass and Levin \cite{BL} to prove a parabolic Harnack inequality.

Let $\mathcal{T} = \{0,1,2,\ldots \} \times M$ denote the discrete space-time. We will study the
$\mathcal{T}$-valued Markov chain $(V_k,X_k)$, where the time component $V_k=V_0+k$ is deterministic and the space component  $X_k$ is same as the discrete time Markov chain with transition density $J$ with respect to $\mu$. We write
$\mathbb{P}^{(j,x)}$ for the law of $(V_k,X_k)$ started at $(j,x)$.
Let $\mathcal{F}_j = \sigma\left( (V_k,X_k): k \le j \right)$ denote the natural filtration associated with $(V_k,X_k)$. Given $D \subset \mathcal{T}$, we denote by $\tau_D$ the exit time
\[
 \tau_D := \min \{ k \ge 0: (V_k,X_k) \notin D \}.
\]

\begin{definition}
A bounded
function $u(k,x)$ on $\T$ is said to be \emph{parabolic} on $D \subset \T$
if $u(V_{k \wedge \tau_D},X_{k \wedge \tau_D})$ is a martingale. In other words, $u$ satisfies the discrete time backwards heat equation
\[
 u_{n}(x)= P u_{n+1} (x)
\]
for all $(n,x) \in D$, where $u_k(x)=u(k,x)$ for all $(k,x) \in \T$.
\end{definition}
It is immediate that if $D_1 \subset D_2$  and if $u$ is parabolic on $D_2$, then $u$ is parabolic on $D_1$. The main example of parabolic function that we have in mind is the heat kernel as demonstrated in the following lemma.

 \begin{lem} \label{l-parab}
For each $n_0$ and $x_0 \in M$, the function $q(k,x)= h_{n_0 -k}(x,x_0)=h_{n_0 -k}(x_0,x) $ is parabolic on $\{ 0,1,\ldots,n_0\} \times M$.
\end{lem}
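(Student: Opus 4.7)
The plan is to unpack the definition and reduce everything to the Chapman--Kolmogorov equation \eqref{e-ck}. First I would observe that $q$ is bounded on $\T$: by \eqref{e-count} and the fact that $p_{n_0-k}(x,x_0) \le 1$ (as a probability), one has
\[
 0 \le q(k,x) = h_{n_0-k}(x,x_0) = \frac{p_{n_0-k}(x,x_0)}{\mu_{x_0}} \le C_\mu
\]
for every $(k,x) \in \T$ with $0 \le k \le n_0$ (and we may extend $q$ by $0$ outside this range), giving the integrability needed for the martingale claim.

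Next I would verify the backwards heat equation $q_n(x) = P q_{n+1}(x)$ for $(n,x) \in D$ with $n < n_0$. Setting $m = n_0 - n - 1 \ge 0$, this equation becomes
\[
 h_{m+1}(x,x_0) \;=\; \sum_{y \in M} p(x,y)\, h_{m}(y,x_0) \;=\; \sum_{y \in M} h_{1}(x,y)\, h_{m}(y,x_0)\, \mu_y,
\]
where I used $p(x,y) = J(x,y)\mu_y = h_1(x,y)\mu_y$. This is precisely the Chapman--Kolmogorov identity \eqref{e-ck} applied to $h_{1+m}(x,x_0)$, so the equation holds.

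Finally, I would deduce the martingale property from the Markov property of $(V_k,X_k)$. For $k < \tau_D$ one has $V_k \le n_0-1$ and, conditional on $\F_k$, $V_{k+1} = V_k + 1$ while $X_{k+1}$ has law $p(X_k,\cdot)$, so
\[
 \EE^{(j,x)}\bigl[q(V_{k+1},X_{k+1}) \,\big|\, \F_k\bigr]
 = \sum_{y \in M} p(X_k,y)\, q(V_k+1,y)
 = q(V_k,X_k)
\]
by the identity just verified. For $k \ge \tau_D$ the stopped process $q(V_{k \wedge \tau_D},X_{k \wedge \tau_D})$ is constant, which trivially preserves the martingale property. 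Since $q$ is bounded, this yields the martingale in the sense of the definition. There is essentially no obstacle: the content of the lemma is a direct translation of \eqref{e-ck} into the language of the space--time chain running backwards in time from $(n_0,x_0)$.
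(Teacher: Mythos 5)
Your proof is correct and follows essentially the same route as the paper's: reduce the martingale/backwards-heat-equation claim to the Chapman--Kolmogorov identity \eqref{e-ck} via the Markov property of the space--time chain. The extra remarks on boundedness and on the stopped process after $\tau_D$ are fine but not a different argument.
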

\begin{proof}
\begin{eqnarray*}
\mathbb{E}\left[ q(V_{k+1},X_{k+1}) \vert \mathcal{F}_k\right] &=& \mathbb{E}\left[ h_{n_0 -V_{k+1}} (X_{k+1},x_0) \vert \mathcal{F}_k\right]  \\
&=& \mathbb{E}^{(V_k,X_k)} \left[ h_{n_0-V_1}( X_1,x_0 ) \right] \\
&=& \sum_z h_1(X_k,z)h_{n_0-V_k-1}(z,x_0) \\
&=& h_{n_0-V_k}(X_k,x_0) = q(V_k,X_k).
\end{eqnarray*}
The second equation follows from Markov property and last equation follows from  Chapman-Kolmogorov equation \eqref{e-ck}.
\end{proof} For $(k,x) \in \T$ and $A \subset \T$, define $N_A(k,x) := \PP^{(k,x)} \left( X_1 \in A(k+1) \right)$ if $(k,x) \notin A$ and $0$ otherwise.
\begin{lem}
 \label{l-mart} For the $\T$-valued Markov chain $(V_k,X_k)$, let $A \subset \T$ and
\[J_n= \ind_A(V_n,X_n) - \ind_A(V_0,X_0) -\sum_{k=0}^{n-1} N_A(V_k,X_k).
\]
Then $J_{n \wedge T_A}$ is a martingale.
\end{lem}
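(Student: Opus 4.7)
The plan is to recognize this as a standard compensator / Doob decomposition argument: $\sum_{k=0}^{n-1} N_A(V_k,X_k)$ is precisely the predictable compensator of the submartingale $\ind_A(V_n,X_n) - \ind_A(V_0,X_0)$ up to the first entry into $A$, so subtracting it produces a martingale, and stopping at $T_A$ only helps. I would split the analysis into the cases $n \ge T_A$ and $n < T_A$ and check the one-step martingale increment $\EE[J_{(n+1)\wedge T_A} - J_{n\wedge T_A} \mid \F_n] = 0$ directly.

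On the event $\{n \ge T_A\}$ the stopped times coincide, so $J_{(n+1)\wedge T_A} - J_{n \wedge T_A} = 0$ trivially. On $\{n < T_A\}$, $(V_n,X_n) \notin A$ forces $\ind_A(V_n,X_n)=0$, so
\[
J_{n+1}-J_n \;=\; \ind_A(V_{n+1},X_{n+1}) - N_A(V_n,X_n).
\]
Here I would use the two defining features of the space-time chain: the time component satisfies $V_{n+1}=V_n+1$ deterministically, and the space component $X_{n+1}$ is obtained from $X_n$ via one step of the original Markov kernel. Consequently, by the Markov property,
\[
\EE\bigl[\ind_A(V_{n+1},X_{n+1}) \,\bigm|\, \F_n\bigr] \;=\; \PP^{(V_n,X_n)}\bigl(X_1 \in A(V_n+1)\bigr),
\]
where $A(j) := \{y \in M : (j,y) \in A\}$ is the time-slice notation implicit in the definition of $N_A$. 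Since $(V_n,X_n)\notin A$ on this event, the right-hand side equals $N_A(V_n,X_n)$ by definition, and $N_A(V_n,X_n)$ is $\F_n$-measurable, so the conditional expectation of the increment vanishes.

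Combining the two cases gives $\EE[J_{(n+1)\wedge T_A} \mid \F_n] = J_{n\wedge T_A}$, and integrability is automatic because $\ind_A$ is bounded by $1$ and the compensator sum, when stopped at $T_A$, has at most $T_A$ terms each bounded by $1$. A brief measurability check -- that $T_A$ is an $(\F_n)$-stopping time and that $N_A(V_k,X_k)$ is $\F_k$-measurable -- completes the argument. There is no genuine obstacle here; the only content is to unwind the notation $A(k+1)$ and to use that the time coordinate is deterministic so that the conditional law of $(V_{n+1},X_{n+1})$ given $\F_n$ is determined by $X_n$ alone through the one-step kernel.
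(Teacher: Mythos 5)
Your proof is correct and follows essentially the same route as the paper's: the one-step increment of $J_{n\wedge T_A}$ vanishes trivially on $\{T_A\le n\}$, and on $\{T_A>n\}$ the Markov property together with the deterministic time coordinate identifies $\EE[\ind_A(V_{n+1},X_{n+1})\mid\F_n]$ with $N_A(V_n,X_n)$. The only additions are your explicit remarks on integrability and measurability, which the paper leaves implicit.
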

\begin{proof}
 We have
\begin{eqnarray*}
\lefteqn{\mathbb{E}\left[ J_{(k+1)\wedge T_A} - J_{k \wedge T_A} | \mathcal{F}_k \right] } \\
 &=&
\mathbb{E} [ \ind_A \left( V_{(k+1) \wedge T_A},X_{(k+1)\wedge T_A} \right) - \ind_A \left( V_{k \wedge T_A},X_{k \wedge T_A} \right) 
- N_A(V_{k \wedge T_A},X_{k\wedge T_A}) | \mathcal{F}_k ].
\end{eqnarray*}
On the event $\{T_A \le k \}$, this is 0. If $T_A > k$, this is equal to
\begin{eqnarray*}
\lefteqn{\mathbb{P}^{(V_k,X_k)} \left( (V_1,X_1) \in A \right) - N_A(V_k,X_k)} \\ &=& \mathbb{P}^{(V_k,X_k)} \left( X_1 \in A(V_k +1) \right) - N_A(V_k,X_k) = 0.
\end{eqnarray*}
\end{proof}
The next three technical lemmas are needed for the proof of parabolic Harnack inequality. They compare various hitting and exit times for the $\T$-valued Markov chain $(V_k,X_k)$.

We introduce a few notations.
Let $\gamma$ be a constant satisfying \eqref{e-exitX} from Proposition \ref{p-exitX}.
Define \[Q(k,x,r) := \{ k,k+1,\ldots,k+\lfloor \gamma r^\beta \rfloor \} \times B(x,r). \]
For the $\T$-valued Markov chain $(V_k,X_k)$ defined above, we denote the exit time of $Q(0,x,r)$ by \[\tau(x,r):= \min\{ k : (V_k,X_k) \notin Q(0,x,r). \]
Note that $\tau(x,r) \le \lfloor \gamma r^\beta \rfloor +1$ is a bounded stopping time.

For $A \subset \T$ and $k \in \N$, we set $A(k):= \{ y \in M : (k,y) \in A \}$.

Given a set $A \subset T$, we denote the hitting time by $T_A= \min \{ k : (V_k,X_k) \in A \}$ and the cardinality of $A$ by $\abs{A}$.
\begin{lem} \label{l-t1}
 Under the assumptions of Theorem \ref{t-ubq}, there exists $\theta_1>0$ such that
 \[
  \PP^{(0,x)} \left( T_A < \tau(x,r) \right) \ge \theta_1 \frac{\abs{A}}{V_h(r)r^\beta}
 \]
for all $x \in M$, for all $r>0$ and for all $A \subset Q(0,x,r)$ satisfying $A(0) = \emptyset$.
\end{lem}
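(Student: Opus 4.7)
The plan is to apply optional stopping to the martingale $J_n$ of Lemma \ref{l-mart} at the bounded stopping time $S := T_A \wedge \tau(x,r)$. Because $A(0)=\emptyset$ gives $\ind_A(V_0,X_0)=0$, and because $A \subset Q(0,x,r)$ while $(V_{\tau(x,r)},X_{\tau(x,r)}) \notin Q(0,x,r)$ by definition of $\tau(x,r)$, one sees that $\ind_A(V_S,X_S) = \ind_{\{T_A < \tau(x,r)\}}$ almost surely (the case $T_A=\tau(x,r)$ being impossible). Taking expectations in $J_S=0$ therefore reduces the lemma to a lower bound for
\[
 \PP^{(0,x)}(T_A < \tau(x,r)) \;=\; \EE^{(0,x)}\Big[\sum_{k=0}^{S-1} N_A(V_k,X_k)\Big].
\]

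The regime $r < \gamma^{-1/\beta}$ is vacuous: $\lfloor \gamma r^\beta \rfloor = 0$ together with $A(0) = \emptyset$ forces $A=\emptyset$. Assume henceforth $r \geq \gamma^{-1/\beta}$, so that $1+r$ and $\lfloor \gamma r^\beta \rfloor + 1$ are comparable to $r$ and $r^\beta$ respectively. Introduce the good event
\[
 E := \Big\{\max_{0 \leq k \leq \lfloor \gamma r^\beta \rfloor} d(X_k,x) \leq r/2\Big\},
\]
for which Proposition \ref{p-exitX} gives $\PP^{(0,x)}(E) \geq 3/4$ and on which the chain stays inside $Q(0,x,r)$ up to time $\lfloor \gamma r^\beta \rfloor$, forcing $\tau(x,r) = \lfloor \gamma r^\beta \rfloor + 1$. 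On the subevent $E \cap \{T_A > \tau(x,r)\}$ the sum above runs over all $k = 0, \ldots, \lfloor \gamma r^\beta \rfloor$, with $X_k \in B(x,r/2)$; for any $z \in A(k+1) \subset B(x,r)$ the triangle inequality gives $d(X_k,z) \leq 3r/2$, so \ref{ljp}, \eqref{e-vd}, and \eqref{e-count} yield $J(X_k,z)\mu_z \geq c/(V_h(r) r^\beta)$ uniformly. Summing over $z \in A(k+1)$ and then over $k$, and using $A(0) = \emptyset$ to identify $\sum_k |A(k+1)| = |A|$, produces
\[
 \sum_{k=0}^{S-1} N_A(V_k,X_k) \;\geq\; \frac{c\,|A|}{V_h(r)\,r^\beta} \qquad \text{on } E \cap \{T_A > \tau(x,r)\}.
\]

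A dichotomy then finishes the proof. If $\PP^{(0,x)}(E \cap \{T_A > \tau(x,r)\}) \geq 1/2$, the optional stopping identity combined with the preceding display directly yields $\PP(T_A < \tau(x,r)) \geq c |A|/(2 V_h(r) r^\beta)$. Otherwise, $\PP(E) \geq 3/4$ forces $\PP(E \cap \{T_A < \tau(x,r)\}) \geq 1/4$, and the crude bound $|A| \leq |Q(0,x,r)| \leq C' V_h(r) r^\beta$, coming from \eqref{e-count}, \eqref{e-hom}, and $r \geq \gamma^{-1/\beta}$, upgrades the constant $1/4$ into a bound of the required form $\theta_1 |A|/(V_h(r) r^\beta)$. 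I expect the only real technical obstacle to be the uniform lower bound $J(X_k,z)\mu_z \geq c/(V_h(r) r^\beta)$: it requires combining \ref{ljp} with volume doubling and using the constraint $r \geq \gamma^{-1/\beta}$ to neutralize the non-homogeneous factor $(1+d(X_k,z))^\beta$ appearing in the jump kernel hypothesis.
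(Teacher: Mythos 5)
Your proposal is correct and follows essentially the same route as the paper: optional stopping of the martingale from Lemma \ref{l-mart} at $S=T_A\wedge\tau(x,r)$, the lower bound $N_A(k,w)\gtrsim |A(k+1)|/(V_h(r)r^\beta)$ from \ref{ljp} plus doubling and uniform discreteness, the exit-time estimate of Proposition \ref{p-exitX} to guarantee the sum covers all of $A$ with probability bounded below, and the trivial case via $|A|\le|Q(0,x,r)|\lesssim V_h(r)r^\beta$. The only cosmetic difference is how the dichotomy is organized (you split on $\PP(E\cap\{T_A>\tau\})\ge 1/2$, the paper on $\PP(T_A\le\tau)\ge 1/4$), which changes nothing of substance.
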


\begin{proof}
 Since $A(0)=\emptyset$ and $A \subset Q(0,x,r)$, it suffices to consider the case $\gamma r^\beta \ge 1$. We abbreviate $\tau(x,r)$ by $\tau$.  Since $A \subset Q(0,x,r)$, $T_A \neq \tau$.

 By \eqref{e-hom}, \eqref{e-vd}, there exists $C_1>0$ such that
 \[
  \frac{\abs{A}}{V_h(r)r^\beta} \le \frac{\abs{Q(0,x,r)}}{V_h(r)r^\beta} \le C_1
 \]
for all $x \in M$ and for all $r>0$. Therefore if $\PP^{(0,x)}(T_A \le \tau ) \ge 1/4$, we are done.

We may assume, without loss of generality that $\PP^{(0,x)}(T_A \le \tau) <1/4$. Define the stopping time $S= T_A \wedge \tau$. By Lemma \ref{l-mart} and optional stopping theorem, we have
\begin{equation}\label{e-t1-1}
  \mathbb{P}^{(0,x)}(T_A < \tau_r)=\mathbb{E}^{(0,x)}\ind_A(S,X_S) \ge \mathbb{E}^{(0,x)} \sum_{k=0}^{S-1}N_A(k,X_k).
\end{equation}
By \eqref{e-count} and \ref{ljp} there exists $\kappa >0$ such that $p(x,x) > \kappa$ for all $x \in M$.

There exist $c_1,c_2>0$ such that,
\begin{eqnarray}
\nonumber N_A(k,w)&=&\mathbb{P}^{(k,w)}(X_1 \in A(k+1))\\
\nonumber & \ge& \sum_{y \in A(k+1),y \neq w} \frac{c_1}{V_h(d(w,y)) d(w,y)^{\beta}} + \ind_{A(k+1)}(w) \kappa\\
\label{e-t1-2} &\ge& \frac{c_2}{V_h(r) r^\beta} \left|A(k+1) \right|
\end{eqnarray}
for all $x \in M$, $r>0$ and for all $(k,w) \in Q(0,x,r) \setminus A$. In the second line above we used, \ref{ljp} and that $d$ is uniformly discrete. For the last line, we used $d(w,y) \le 2r$, \eqref{e-vd}, \eqref{e-count} and $\gamma r^\beta \ge 1$.

On the event that $S \ge \lfloor \gamma r^\beta \rfloor$,  by $A(0)=\emptyset$, $A \subset Q(0,x,r)$, \eqref{e-t1-1} and \eqref{e-t1-2}, there exists $c_3>0$ such that
\[
\sum_{k=0}^{S-1}N_A(k,X_k) \ge c_3 \frac{\abs{A}}{V_h(r)r^\beta}.
\]
Since $\tau \le \lfloor \gamma r^\beta \rfloor +1$ and $T_A \neq \tau$, we have
\begin{eqnarray*}
\mathbb{E}^{(0,x)} \ind_A(S,X_S)&\ge& c_3 \frac{\left|A\right|}{V_h(r) r^{\beta}} \mathbb{P}^x (S \ge \lfloor \gamma r^\beta \rfloor) \\
&\ge & c_3 \frac{\left|A\right|}{V_h(r) r^{\beta}} \left(1 - \mathbb{P}^x (T_A \le \tau ) - \mathbb{P}^x( \tau \le \lfloor \gamma r^\beta \rfloor )\right) \\
& \ge & c_3 \frac{\left|A\right|}{2 V_h(r) r^{\beta}}.
\end{eqnarray*}
The second line follows from the union bound by observing $ \{ S < \lfloor \gamma r^\beta \rfloor \}  \subseteq \{ T_A \le \tau \} \cup \{ \tau \le \lfloor \gamma r^\beta \rfloor) \}$.
The last inequality is due to our choice of $\gamma$ satisfying \eqref{e-exitX}
and the assumption that  $\mathbb{P}^x (T_A \le \tau ) < 1/4$.
\end{proof}
Define the set $U(k,x,r)=\{k\} \times B(x,r)$.
\begin{lem}\label{l-t2}
 Under the assumptions of Theorem \ref{t-ubq}, there exists $\theta_2 >0$ such that, for all $(k,x) \in Q(0,z,R/2)$, for all $r \le R/2$ and for all $k \ge \lfloor \gamma r^\beta \rfloor +1$, we have
 \[
  \PP^{(0,z)} \left(T_{U(k,x,r)} < \tau(z,R) \right) \ge \theta_2 \frac{V_h(r) r^{\beta}}{V_h(R) R^{\beta}}
 \]
for all $z \in M$ and for all $R >0$.
\end{lem}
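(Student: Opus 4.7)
\emph{Plan.} The strategy is to reduce to Lemma \ref{l-t1} by choosing a suitable target set $A$ and then use Proposition \ref{p-exitX} together with the strong Markov property to convert ``hitting an enlarged spacetime cylinder around $(k,x)$'' into ``hitting the thin time slice $U(k,x,r)$.'' The argument naturally splits into two regimes depending on whether $\lfloor \gamma r^\beta \rfloor$ vanishes.

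If $\gamma r^\beta < 1$, then $\lfloor \gamma r^\beta \rfloor = 0$ and the hypothesis reduces to $k\ge 1$. Here I would apply Lemma \ref{l-t1} directly to $A = U(k,x,r)$ itself, checking that $A \subset Q(0,z,R)$ (from $r \le R/2$ and $k \le \lfloor\gamma(R/2)^\beta\rfloor$) and $A(0)=\emptyset$ (from $k \ge 1$). Together with the volume estimate $|B(x,r)| \ge c V_h(r)$ coming from \eqref{e-count} and \eqref{e-hom}, this already gives $\PP^{(0,z)}(T_{U(k,x,r)} < \tau(z,R)) \ge c' V_h(r)/(V_h(R)R^\beta)$, and the extra $r^\beta$ factor in the target bound is immediately absorbed since $\gamma r^\beta < 1$.

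If $\gamma r^\beta \ge 1$, I would instead apply Lemma \ref{l-t1} to the enlarged cylinder
\[ A := \{k - \lfloor \gamma r^\beta \rfloor + 1,\ldots,k\}\times B(x,r/2). \]
The hypothesis $k \ge \lfloor \gamma r^\beta \rfloor + 1$ forces $A(0)=\emptyset$, and $r \le R/2$ together with $(k,x)\in Q(0,z,R/2)$ gives $A \subset Q(0,z,R)$. The bound $|A| \ge \lfloor \gamma r^\beta \rfloor \cdot |B(x,r/2)| \ge c r^\beta V_h(r)$ (using $\lfloor \gamma r^\beta \rfloor \ge \gamma r^\beta / 2$ together with \eqref{e-vd}, \eqref{e-count}, \eqref{e-hom}) combined with Lemma \ref{l-t1} yields $\PP^{(0,z)}(T_A < \tau(z,R)) \ge c' r^\beta V_h(r)/(V_h(R)R^\beta)$. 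On the event $\{T_A < \tau(z,R)\}$, letting $j^\ast := T_A \in [k-\lfloor\gamma r^\beta\rfloor+1,\,k]$, we have $X_{j^\ast} \in B(x,r/2)$, and the remaining time $k-j^\ast$ is at most $\lfloor \gamma r^\beta \rfloor - 1$; Proposition \ref{p-exitX} applied at $X_{j^\ast}$ with radius $r$ and the strong Markov property at $T_A$ then guarantee that, with conditional probability at least $3/4$, $X_m \in B(X_{j^\ast}, r/2) \subset B(x,r) \subset B(z,R)$ for every $m\in[j^\ast,k]$. On this combined event $X_k \in B(x,r)$ and the path stays in $B(z,R)$ up to time $k$, so $T_{U(k,x,r)} = k < \tau(z,R)$, which suffices.

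The main technical point is keeping the spacetime inclusions, the cardinality bound for $A$, and the constants $\gamma$, $C_D$, $C_h$, $C_\mu$ consistent, together with the case split at $\gamma r^\beta = 1$ that reconciles the ``thin slice vs.~enlarged cylinder'' tradeoff and produces a lower bound of the claimed form $V_h(r) r^\beta/(V_h(R) R^\beta)$ uniformly in both regimes.
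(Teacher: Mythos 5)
Your proposal is correct and follows essentially the same route as the paper: apply Lemma \ref{l-t1} to the backward spacetime cylinder $\{k-\lfloor\gamma r^\beta\rfloor,\ldots,k\}\times B(x,r/2)$ and then use Proposition \ref{p-exitX} with the strong Markov property to keep the chain in $B(x,r)$ until time $k$. The only difference is your explicit case split at $\gamma r^\beta=1$, which the paper absorbs implicitly since its cylinder has $\lfloor\gamma r^\beta\rfloor+1\ge\max(1,\gamma r^\beta)$ time slices.
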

\begin{proof}
 Let $Q'= \{ k,k-1,\ldots,k-\lfloor \gamma r^\beta \rfloor \} \times B(x,r/2)$. By triangle inequality $B(x,r/2) \subset B(z,R)$. Therefore $Q' \subset Q(0,z,R)$ and $Q'(0) =\emptyset$.
 By Lemma \ref{l-t1} and \eqref{e-vc}, there exists $c_1>0$ such that
\[
\PP^{(0,z)} (T_{Q'}< \tau(z,R)) \ge c_1 \frac{V_h(r) r^{\beta}}{ V_h(R) R^{\beta}}
\]
for all $z \in M$, for all $R>0$ and for all $r \in (0, R]$.

By the choice of $\gamma$ satisfying \eqref{e-exitX}, starting at a point in $Q'$ there is a probability of at least $3/4$ that the chain stays in $B(x,r)$ for at least time
$\lfloor \gamma r^\beta \rfloor$. By strong Markov property, there is a probability of at least $\frac{3}{4}c_1 \frac{V_h(r) r^{\beta}}{ V_h(R) R^{\beta}}$ that
the chain hits $Q'$ before exiting $Q(0,z,R)$  and stays within $B(x,r)$ for an additional time $\lfloor \gamma r^\beta \rfloor$, hence hits $U(k,x,r)$ before exiting $Q(0,z,R)$.
\end{proof}
\begin{lem} \label{l-t3}
Suppose $H(k,w)$ is nonnegative and $0$ if $w \in B(x,2r)$.  Under the assumptions of Theorem \ref{t-ubq}, there exists $\theta_3$ (not depending on $x,r,H$) such that
\begin{equation} \label{e-t3}
\mathbb{E}^{(0,x)} \left[ H(V_{\tau(x,r)},X_{\tau(x,r)})\right] \le \theta_3 \mathbb{E}^{(0,y)} \left[  H(V_{\tau(x,r)},X_{\tau(x,r)}) \right]
\end{equation}
for all $y \in B(x,r/2)$.
\end{lem}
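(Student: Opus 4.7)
The plan is to exploit that $H$ vanishes on $B(x,2r)$, so only exits via a long jump from some $u \in B(x,r)$ directly to some $w \in B(x,2r)^c$ can contribute; the one-step Markov property will then factor each such contribution into a survival probability times a starting-point-independent quantity built out of $J(x,\cdot)$ and $H$.

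First, for any $z \in B(x,r)$, any $1 \le k \le \lfloor \gamma r^\beta \rfloor + 1$, and any $w \in B(x,2r)^c$, the event $\{\tau(x,r) = k,\, X_\tau = w\}$ coincides with $\{\tau(x,r) \ge k,\, X_k = w\}$, since $w \notin B(x,r)$ forces $k$ to be the first exit time. Conditioning on $X_{k-1}$,
\[
\PP^{(0,z)}(\tau = k,\, X_\tau = w) \;=\; \sum_{u \in B(x,r)} \PP^{(0,z)}(\tau \ge k,\, X_{k-1} = u)\, p(u,w).
\]
For $u \in B(x,r)$ and $w \in B(x,2r)^c$, the triangle inequality yields $d(x,w)/2 \le d(u,w) \le 3\,d(x,w)/2$; combined with \eqref{e-vd} and the two-sided bound \ref{ujp}--\ref{ljp}, this produces a constant $C_J \ge 1$, depending only on the structural constants, such that $C_J^{-1} J(x,w) \le J(u,w) \le C_J J(x,w)$. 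Summing the displayed identity over $u$ and using $\sum_{u \in B(x,r)} \PP^{(0,z)}(\tau \ge k,\, X_{k-1}=u) = \PP^{(0,z)}(\tau \ge k)$ then gives
\[
C_J^{-1} J(x,w)\, \mu_w\, \PP^{(0,z)}(\tau \ge k) \;\le\; \PP^{(0,z)}(\tau = k,\, X_\tau = w) \;\le\; C_J J(x,w)\, \mu_w\, \PP^{(0,z)}(\tau \ge k).
\]

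Second, since $H \ge 0$ and $H = 0$ on $B(x,2r)$, multiplying the previous two-sided bound by $H(k,w)$ and summing yields
\[
\mathbb{E}^{(0,z)}[H(V_\tau, X_\tau)] \;\asymp\; \sum_{k=1}^{\lfloor \gamma r^\beta \rfloor + 1} A_k\, \PP^{(0,z)}(\tau \ge k),
\qquad
A_k := \sum_{w \in B(x,2r)^c} H(k,w)\, J(x,w)\, \mu_w,
\]
with multiplicative constants $C_J$ (upper) and $C_J^{-1}$ (lower). It remains to compare the survival probabilities from $x$ and $y$. The upper bound $\PP^{(0,x)}(\tau \ge k) \le 1$ is trivial. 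For the lower bound, for $y \in B(x,r/2)$, applying Proposition \ref{p-exitX} at $y$ with radius $r$ and using $d(X_j, x) \le d(X_j, y) + r/2$ forces $\tau(x,r) = \lfloor \gamma r^\beta \rfloor + 1$ with probability at least $3/4$; in particular $\PP^{(0,y)}(\tau \ge k) \ge 3/4$ for every $1 \le k \le \lfloor \gamma r^\beta \rfloor + 1$.

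Chaining the inequalities gives
\[
\mathbb{E}^{(0,x)}[H(V_\tau, X_\tau)] \;\le\; C_J \sum_k A_k \;\le\; \tfrac{4}{3} C_J \sum_k A_k\, \PP^{(0,y)}(\tau \ge k) \;\le\; \tfrac{4}{3} C_J^2\, \mathbb{E}^{(0,y)}[H(V_\tau, X_\tau)],
\]
so one may take $\theta_3 = (4/3) C_J^2$, which depends neither on $x$, $r$, nor $H$. The only substantive step is the comparison $J(u,w) \asymp J(x,w)$ for $u \in B(x,r)$ and $w \in B(x,2r)^c$; this is an immediate consequence of \ref{ujp}, \ref{ljp}, and \eqref{e-vd}, so no real obstacle is anticipated beyond careful bookkeeping of the summation ranges.
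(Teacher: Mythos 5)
Your proof is correct and follows essentially the same route as the paper's: both condition one step before the exit time, use the triangle inequality together with \ref{ujp}, \ref{ljp} and volume doubling to get $p(u,w)\asymp p(x,w)$ uniformly for $u\in B(x,r)$ and $w\notin B(x,2r)$, and invoke Proposition \ref{p-exitX} to bound the survival probability from $y\in B(x,r/2)$ below by $3/4$. The only cosmetic difference is that the paper first reduces to indicators $H=\ind_{(k,w)}$ by linearity, whereas you keep $H$ general and package the exit contributions into the starting-point-independent sums $A_k$.
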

\begin{proof}
 By the linearity of expectation and the inequality $1 \le \tau(x,r) \le \lfloor \gamma r^\beta \rfloor +1$,
 it suffices to verify \eqref{e-t3} for  indicator functions $H= \ind_{(k,w)}$ for all $x \in M$, for all $r >0$, for all $y \in B(x,r/2)$, for all $w \notin B(x,2r)$ and for all $1 \le k \le \lfloor \gamma r^\beta \rfloor +1$.

 Let $x \in M, r>0, y \in B(x,r/2), w \notin B(x,2r)$ and $ 1 \le k \le  \lfloor \gamma r^\beta \rfloor +1$.
There exists $c_1>0$ such that
\begin{eqnarray}
 \nonumber \EE^{(0,y)} \left[ \ind_{(k,w)} (V_{\tau(x,r)}, X_{\tau(x,r)} )\right] &=& \EE^{(0,y)} \left[ \EE^{(0,y)} \left[ \ind_{(k,w)} (V_{\tau(x,r)} , X_{\tau(x,r)})\vline \F_{k-1} \right] \right] \\
\label{e-t3-1} &=& \EE^{(0,y)} \left[ \ind_{\tau(x,r) > k-1} p(X_{k-1},w)\right]\\
\nonumber &\ge & \PP^{(0,y)} (\tau(x,r) > k-1) \inf_{z \in B(x,r)} p(z,w) \\
\nonumber & \ge &   \PP^{(0,y)} (\tau(x,r) = \lfloor \gamma r^\beta \rfloor+1 ) \inf_{z \in B(x,r)} p(z,w) \\
& \ge & c_1 \frac{1}{V_h(d(x,w))d(x,w)^\beta} \label{e-t3-2}
\end{eqnarray}
for all $x \in M$, for all $r >0$, for all $y \in B(x,r/2)$, for all $w \notin B(x,2r)$ and for all $1 \le k \le \lfloor \gamma r^\beta \rfloor +1$.
The last line follows from \eqref{e-exitX}, \ref{ljp}, \eqref{e-count}, \eqref{e-vc} and the triangle inequality $3 d(x,w) \ge d(z,w)$ for all $z \in B(x,r)$ and for all $w \notin B(x,2r)$.

By \eqref{e-t3-1},\ref{ujp}, \eqref{e-vc} and the triangle inequality $d(z,w) \ge d(x,w)/2$ for all $z \in B(x,r)$, there exists $C_1>0$ such that
\begin{equation}
 \label{e-t3-3} \EE^{(0,x)} \left[ \ind_{(k,w)} (V_{\tau(x,r)}, X_{\tau(x,r)})\right] \le \sup_{z \in B(x,r)} p(z,w) \le  \frac{C_1}{V_h(d(x,w))d(x,w)^\beta}
\end{equation}
for all $x \in M$,  for all $r>0$, for all $w \notin B(x,2r)$ and for all $k \in \N$.

By \eqref{e-t3-2} and \eqref{e-t3-3}, the choice $\theta_3=C_1/c_1$ satisfies \eqref{e-t3}.
\end{proof}

We need the following exit time definition:
\[
 \tau(k,x,r)= \min \{ n \in \N: (V_n,X_n) \notin Q(k,x,r) \}.
\]
As before, we abbreviate $\tau(0,x,r)$ by $\tau(x,r)$.
We are now ready to prove the following parabolic Harnack inequality.
\begin{theorem}[Parabolic Harnack inequality] \label{t-phi}
Under the assumptions of Theorem \ref{t-ubq}, there exist $C_H,R_0 >0$ such that if $q$ is bounded, non-negative on $\T$ and parabolic on $\{0,1,\ldots, \lfloor 8 \gamma R^\beta \rfloor \} \times M$, then
 \begin{equation} \label{e-phi}
  \max_{(k,y) \in Q( \lfloor \gamma R^\beta \rfloor , z ,R/3)} q(k,y) \le C_H \min_{w \in B(z,R/3)} q(0,w)
 \end{equation}
for all $R \ge R_0$, for all $q$ and for all $z \in M$.
\end{theorem}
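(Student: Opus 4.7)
The plan is to follow the iteration scheme of Bass and Levin \cite{BL}, adapted to the jump-process setting and using Lemmas \ref{l-t1}--\ref{l-t3} as the basic inputs. By the linearity of the equation $u_k = P u_{k+1}$, I may rescale $q$ so that $\min_{w \in B(z, R/3)} q(0, w) = 1$; the theorem then reduces to showing that $\max_{(k, y) \in Q(\lfloor \gamma R^\beta \rfloor, z, R/3)} q(k, y) \le C_H$ for an absolute constant $C_H$. I will argue by contradiction: suppose that some $(k_0, y_0)$ in the upper cylinder has $q(k_0, y_0) = \lambda$ with $\lambda$ arbitrarily large, and derive a contradiction with the boundedness of $q$.

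The heart of the argument is a \emph{growth lemma}: there exist $\eta \in (0, 1)$ and $K_0 > 0$, depending only on the structural constants (in particular on $\theta_1, \theta_2, \theta_3$ from Lemmas \ref{l-t1}--\ref{l-t3}), such that whenever $q(k^*, y^*) \ge \lambda \ge K_0$ at some point $(k^*, y^*)$ in a suitable sub-cylinder of the parabolicity domain, there exists $(k'', y'')$ within a bounded space-time shift of $(k^*, y^*)$ with $q(k'', y'') \ge (1+\eta)\lambda$. To prove it, fix $w_0 \in B(z, R/3)$ realizing $q(0, w_0) = 1$, pick a cylinder $Q(0, z, R_*)$ containing $(k^*, y^*)$ in its interior with exit time $\tau = \tau(z, R_*)$, and apply the martingale stopping identity coming from the parabolic property:
\begin{equation*}
1 \;=\; q(0, w_0) \;=\; \EE^{(0, w_0)}\bigl[q(V_\tau, X_\tau)\bigr].
\end{equation*}
Split the right-hand side into three pieces according to whether the chain (a) hits a small cylinder $U(k^*, y^*, r)$ with $r \ll R_*$ before $\tau$, (b) exits $Q(0, z, R_*)$ through the lateral spatial boundary via a long jump, or (c) exits through the top time boundary. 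Lemma \ref{l-t2} lower-bounds the probability of (a) by $\theta_2 V_h(r) r^\beta / (V_h(R_*) R_*^\beta)$; Lemma \ref{l-t3}, together with the tail sum \eqref{e-te1} (which uses $\beta < 2$), controls the contribution of (b) in terms of the sup of $q$ off $B(z, 2R_*)$; the time-exit contribution (c) is bounded by $\|q\|_\infty$ times the probability estimate from Proposition \ref{p-exitX}. If it were the case that $q < (1+\eta)\lambda$ throughout a suitable enlargement of $(k^*, y^*)$, then these three estimates combine to give an upper bound on $\lambda$, which is a contradiction once $\eta$ is chosen small enough and $\lambda \ge K_0$.

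Given the growth lemma, iteration produces a sequence $(k_j, y_j)$ with $q(k_j, y_j) \ge \lambda(1+\eta)^j$. The geometric growth of the values together with the geometric summability of the space-time displacements keeps all iterates inside the parabolicity cylinder $\{0, \ldots, \lfloor 8 \gamma R^\beta \rfloor\} \times M$ provided $R \ge R_0$ is taken large enough to absorb discrete rounding errors from $\lfloor \gamma r^\beta \rfloor$. Since $q$ is bounded, this gives the sought contradiction, and $\lambda$ is forced to be bounded by an absolute constant $C_H$.

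The main obstacle, as in the original Bass--Levin scheme, is the precise calibration of the radii $R_*, r$ at each step of the growth lemma: one must simultaneously (i) make the hitting probability in Lemma \ref{l-t2} large enough that the contribution from $U(k^*, y^*, r)$ dominates, (ii) use Lemma \ref{l-t3} and the power tail estimate \eqref{e-te1} to control long-range exits by a quantity strictly smaller than the hitting contribution (this is where $\beta < 2$ enters in an essential way), and (iii) ensure the iterated points remain within the parabolicity cylinder. Each of these is a routine calibration given the tools of Lemmas \ref{l-t1}--\ref{l-t3}, but their interplay is the delicate ingredient of the proof.
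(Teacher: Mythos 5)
You have the right architecture --- normalize so that $\min_{w\in B(z,R/3)}q(0,w)=1$, run a Bass--Levin iteration producing points where $q$ grows geometrically while the space--time displacements are geometrically summable, and contradict boundedness --- and this is exactly the paper's scheme. The gap is in the growth lemma, which is the heart of the matter and whose proof as you sketch it does not close. You perform the martingale decomposition at $(0,w_0)$ over the large cylinder $Q(0,z,R_*)$ and suggest the contribution of hitting the small cylinder $U(k^*,y^*,r)$ can be made to dominate; but Lemma \ref{l-t2} only gives that hitting probability a lower bound of order $V_h(r)r^\beta/(V_h(R_*)R_*^\beta)$, which is small when $r\ll R_*$, and the lateral and top exits of the \emph{large} cylinder are not controlled by the local hypothesis $q<(1+\eta)\lambda$ near $(k^*,y^*)$. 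Worse, bounding the time-exit term by $\|q\|_\infty$ and the long-jump term by the sup of $q$ off $B(z,2R_*)$ is circular: those are precisely the quantities the Harnack inequality is meant to control, and the point of the iteration is never to need them quantitatively (boundedness of $q$ enters only qualitatively, at the very end).

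The actual mechanism, absent from your sketch, is the following. The radius at step $i$ is coupled to the current value by $r_i/R=C_1K_i^{-1/(\alpha+\beta)}$, the exponent being dictated by the factor $V_h(r)r^\beta$ in Lemmas \ref{l-t1} and \ref{l-t2}; this coupling is the central calibration, not a routine afterthought. The identity $1=q(0,v)=\EE^{(0,v)}[\cdots]$ is used twice, both times in the contrapositive: with Lemma \ref{l-t2} it produces a point $y_i$ close to $x_i$ with $q(k_i,y_i)<\zeta K_i$, and with Lemma \ref{l-t1} it shows that the set $A_i$ where $q\ge\zeta K_i$ occupies at most a third of $Q(k_i+1,x_i,r_i/3)$. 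The growth then comes from decomposing $K_i=q(k_i,x_i)$ at the exit of the \emph{small} cylinder $Q(k_i,x_i,r_i)$: the contribution from first hitting $C_i=Q(k_i+1,x_i,r_i/3)\setminus A_i$ is at most $\zeta K_i$; the contribution from jumping out of $B(x_i,2r_i)$ is less than $\eta K_i$ by Lemma \ref{l-t3} applied with the comparison point $y_i$; and since $\PP^{(k_i,x_i)}(T_{C_i}<\tau)\ge c_1$ by Lemma \ref{l-t1}, the remaining mass is at most $(1-c_1)\max_{Q(k_i+1,x_i,2r_i)}q$. Only this combination yields $\max_{Q(k_i+1,x_i,2r_i)}q\ge(1+\rho)K_i$ with $\rho$ depending solely on structural constants, which is the geometric growth your iteration needs; Proposition \ref{p-exitX} and \eqref{e-te1} enter through Lemmas \ref{l-t1}--\ref{l-t3} rather than as separate boundary estimates.
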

\begin{proof}
Let $a>0$ be such that $d(x,y) \notin (0,a)$ for all $x,y \in M$. Since $(M,d)$ is uniformly discrete such a constant exists. Choose $R_0 \ge \max(3a, 1)$ such that
 \begin{equation} \label{e-phi1}
  \lfloor \gamma R^\beta \rfloor \ge \lfloor \gamma (R/3)^\beta \rfloor +1
 \end{equation}
for all $R \ge R_0$.

Using Lemma \ref{l-t1} and \eqref{e-phi1}, there exists $c_1\in(0,1)$ such that
\begin{equation} \label{e-phi2}
 \PP^{(k,x)} (T_C <\tau(k,x,r)) \ge c_1
\end{equation}
 for all $r\ge R_0$, for all $(k,x) \in \T$, for all $C \subseteq Q(k+1,x,r/3)$ such that  \[\abs{C}/ \abs{Q(k+1,x,r/3)} \ge 1/3.\]

 By multiplying $q$ by a constant, we may assume that
\[
 \min_{w \in B(z,R/3)} q(0,w) =  q(0,v) = 1
\]
for some $v \in B(z,R/3)$.

 Let $\theta_1,\theta_2,\theta_3$ be the constants from Lemmas \ref{l-t1}, \ref{l-t2} and \ref{l-t3} respectively. Define the constants
 \begin{equation} \label{e-phi3}
\eta:=\frac{c_1}{3}, \hspace{1cm} \zeta:= \frac{c_1}{3} \wedge \frac{\eta}{\theta_3}, \hspace{1 cm} \lambda:= \frac{R_0}{a}.
\end{equation}
Let $\alpha>0$, be a constant satisfying \eqref{e-vc}. By \eqref{e-vc} and \eqref{e-hom}, there exists $C_1>0$ large enough such that, for any $r,R,K>0$ that satisfies
\begin{equation}\label{e-phi4}
\frac{r}{R}=C_1 K^{-1/(\alpha+\beta)} < 1,
\end{equation}
we have
\begin{eqnarray}
\label{e-phi5} \frac{\abs{Q(0,x,r/3)}}{V_h(3^{1/\beta} R) R^{\beta}} &>& \frac{9 }{\theta_1 \zeta K},  \\
\label{e-phi6} \frac{V_h(r/\lambda) (r/\lambda)^{\beta}}{V_h(2^{1+1/\beta}R) (2^{1+1/\beta}R)^{\beta}}  &>& \frac{1}{ \theta_2 \zeta K}.
\end{eqnarray}

We now iteratively choose points $(k_i,x_i) \in Q(\lfloor \gamma R^\beta \rfloor , z, 2R/3)$ for $i=1,2,\ldots$ follows:
The sequence $(k_i,x_i)_{i \in \N^*}$  is chosen such that $K_i=q(k_i,x_i)$ is strictly increasing, that is $K_i < K_{i+1}$ for all $i \in \N^*$.
The starting point  $(k_1,x_1) \in Q(\lfloor \gamma R^\beta \rfloor , z, R/3)$ is chosen such that
\[
 K_1= q(k_1,x_1)= \max_{(k,y) \in Q( \lfloor \gamma R^\beta \rfloor , z ,R/3)} q(k,y).
\]
If $C_1 K_1^{-1/(\alpha+\beta)} \ge 1/3$, then we have \eqref{e-phi}.

Consider the case: $C_1 K_1^{-1/(\alpha+\beta)} < 1/3$.
We now describe a procedure to obtain $(k_{i+1},x_{i+1}) \in Q(\lfloor \gamma R^\beta \rfloor , z, 2R/3)$ given $(k_i,x_i) \in Q(\lfloor \gamma R^\beta \rfloor , z, 2R/3)$ for $i=1,2,3,\ldots$.

Let $r_i$ be defined by
\begin{equation}\label{e-phi0}
 \frac{r_i}{R}= C_1 K_i^{-1/(\alpha+\beta)}.
\end{equation}
Assume that $q \ge \zeta K_i$ on $U_i:= \{ k_i \} \times B(x_i,r_i/\lambda)$.
Since $R \ge R_0$, $\lambda \ge 3$ and $r_i \le R$, we have $(k_i,x_i) \in Q(0,v,2^{1/\beta}R)$, $r_i/\lambda \le 2^{1 + (1/\beta)} R$ and $k \ge 1 + \lfloor \gamma (r_i/3)^\beta \rfloor $.
Therefore by Lemma \ref{l-t2},
\begin{eqnarray*}
1 &=& q(0,v) = \EE^{(0,v)} q\left(V_{T_{U_i} \wedge \tau(0,v,2^{1+(1/\beta)}R)},X_{T_{U_i} \wedge \tau(0,v, 2^{1+(1/\beta)}R) }\right)\\
& \ge & \zeta K \PP^{(0,v)} ( T_U < \tau_{Q(0,v,2^{1+1/\beta}R)})  \ge \frac{\theta_2 \zeta K V_h(r/\lambda) (r/\lambda)^{\beta} }{V_h( 2^{1+1/\beta}R)( 2^{1+1/\beta}R) ^ {\beta}}
\end{eqnarray*}
a contradiction to \eqref{e-phi6}. Therefore there exists $y_i \in B(x_i,r/\lambda)$ such that $q(k_i,y_i)< \zeta K$. Since $\zeta <1$, we have that $y_i \neq x_i$.
Since $(M,d,\mu)$ is uniformly discrete $x_i \neq y_i \in B(x_i,r_i/\lambda)$, we have $r_i/\lambda = (a r_i )/R_0 \ge a$. Hence
\begin{equation}
 \label{e-phi7} r_i \ge R_0.
\end{equation}

If $\EE^{(k_i,x_i)} \left[ q(V_{\tau(k_i,x_i,r_i)},X_{\tau(k_i,x_i,r_i)}); X_{\tau(k_i,x_i,r_i)} \notin B(x,2r) \right] \ge \eta K_i$, we get
\begin{eqnarray*}
\zeta K_i > q(k_i,y_i)&\ge& \mathbb{E}^{(k_i,y_i)} \left[ q(V_{\tau(k_i,x_i,r_i)},X_{\tau(k_i,x_i,r_i)}); X_{\tau_{k,r}} \notin B(x_i,2r_i) \right]\\
& \ge & \theta_3^{-1} \mathbb{E}^{(k_i,x_i)} \left[  q(V_{\tau(k_i,x_i,r_i)},X_{\tau(k_i,x_i,r_i)}); X_{\tau(k_i,x_i,r_i)} \notin B(x_i,2r_i) \right] \\
&\ge& \theta_3^{-1} \eta K_i \ge \zeta K_i
\end{eqnarray*}
a contradiction. In the second line above, we used Lemma \ref{l-t3} and the last line follows from the definition of $\zeta$ in \eqref{e-phi3}. Therefore
\begin{equation}\label{e-phi8}
\EE^{(k_i,x_i)} \left[ q(V_{\tau(k_i,x_i,r_i)},X_{\tau(k_i,x_i,r_i)}); X_{\tau(k_i,x_i,r_i)} \notin B(x,2r) \right] < \eta K_i.
\end{equation}

Define the set
\[
A_i := \{ (j,y) \in Q(k_i+1,x_i,r_i/3) :q(j,y) \ge \zeta K_i \}.
\]
Note that $(k_i,x_i) \in Q(\lfloor \gamma R^\beta \rfloor , z, 2R/3)$, $r_i \le R/3$, $R \ge R_0$, \eqref{e-phi1}, $v \in B(z,R/3)$ and
triangle inequality implies $Q(k_i+1,x_i,r_i/3) \subseteq Q(0,v,3^{1/\beta}R)$.
Therefore by Lemma \ref{l-t1}, we have
\begin{eqnarray*}
1=q(0,v) & \ge & \EE^{(0,v)} \left[ q(V_{T_{A_i} }, X_{T_{A_i}}); T_{A_i} < \tau(v, 3^{1/\beta}R)  \right] \\
& \ge & \zeta K_i \mathbb{P}^{(0,v)} (T_{A_i} <  \tau(v, 3^{1/\beta}R) ) \ge \frac{ \zeta K_i \theta_1 \abs{A_i} }{3V_h(3^{1/\beta} R) R^{\beta}}.
\end{eqnarray*}
This along with \eqref{e-phi5} yields
\[
 \frac{\abs{A_i}}{\abs{Q(k_i+1,x_i,r_i/3)}} \le \frac{3V_h(3^{1/\beta} R) R^{\beta}}{\zeta K_i \theta_1\abs{Q(k_i+1,x_i,r_i/3)}} \le \frac{1}{3}.
\]
Define $C_i= Q(k_i+1,x_i,r_i/3) \setminus A_i$ and $M_i = \max_{Q(k_i+1,x_i,2r_i)}q$.
We write $q(k_i,x_i)$ as
\begin{eqnarray*}
\lefteqn{K_i=q(k_i,x_i)=}\\
&& \EE^{(k_i,x_i)} \left[ q(V_{T_{C_i}},X_{T_{C_i}}); T_{C_i} < \tau(k_i,x_i,r_i) \right] \\
&&+   \EE^{(k_i,x_i)} \left[q(V_{\tau(k_i,x_i,r_i)},X_{\tau(k_i,x_i,r_i)});  \tau(k_i,x_i,r_i) <T_{C_i} , X_{\tau(k_i,x_i,r_i)} \notin B(x_i,2r_i) \right] \\
&&+ \EE^{(k_i,x_i)} \left[q(V_{\tau(k_i,x_i,r_i)},X_{\tau(k_i,x_i,r_i)});  \tau(k_i,x_i,r_i) <T_{C_i} , X_{\tau(k_i,x_i,r_i)} \in B(x_i,2r_i) \right].
\end{eqnarray*}
We use the bound \eqref{e-phi8} for the second term above, to obtain
\begin{equation} \label{e-phi9}
 K_i \le \zeta K_i + \eta K_i + M_i \left( 1 - \PP^{(k_i,x_i)} \left( T_{C_i} < \tau(k_i,x_i,r_i) \right)  \right).
\end{equation}
Combining $\abs{C_i}/\abs{Q(k_i+1,x_i,r_i/3)} \ge 1/3$, \eqref{e-phi7} and \eqref{e-phi2}, we have
\begin{equation}
 \label{e-phi10} \PP^{(k_i,x_i)} \left( T_{C_i} < \tau(k_i,x_i,r_i) \right) \ge c_1.
\end{equation}
By \eqref{e-phi9},\eqref{e-phi10}, \eqref{e-phi3}, we get
\[
 K_i \le \frac{c_1}{3} K_i +\frac{c_1}{3} K_i +(1-c_1) M_i.
\]
It follows that
\begin{equation}\label{e-phi11}
 \frac{M_i}{K_i} \ge 1+ \rho
\end{equation}
where $\rho= c_1/ (3(1-c_1)) >0$.

The point $(k_{i+1},x_{i+1}) \in Q(k_i+1,x_i,2r_i)$ is chosen such that
\[
K_{i+1}= q(k_{i+1},x_{i+1})= M_i = \max_{(j,w) \in Q(k_i+1,x_i,2r_i)} q(j,w).
\]
This along with \eqref{e-phi0} and \eqref{e-phi11} gives
\begin{equation}\label{e-phi12}
 K_{i+1} \ge K_i (1+\rho), \hspace{1cm} r_{i+1} \le r_i (1+\rho)^{-1/(\alpha+\beta)}
\end{equation}
for all $i \in \N^*$.
We will now verify that $(k_{i+1},x_{i+1}) \in Q(\lfloor \gamma R^\beta \rfloor, z, 2R/3)$ for all $i \in \N^*$, if  $K_1$ is sufficiently large.
Using $(k_{i+1},x_{i+1}) \in Q(k_i+1,x_i,2r_i)$, $R_0 \ge 1$ and \eqref{e-phi7}, we have
\begin{eqnarray*}
|k_{i+1} - k_i| &\le& 1 + (2r_i)^{\beta} \le  \left( \frac{2^\beta R_0^\beta +1}{R_0^\beta} \right) r_i^\beta \le 5 r_i^\beta\\
d(x_{i+1},x_i) &\le& 2r_i
\end{eqnarray*}
for all $i \in \N^*$.

Therefore by \eqref{e-phi12} and $(k_1,x_1) \in Q(\lfloor \gamma R^\beta \rfloor, z, R/3)$, we have
\begin{eqnarray}
\label{e-phi13} k_i &\le& \lfloor \gamma R^\beta \rfloor + \gamma (R/3)^\beta +  \frac{5r_1^\beta}{1 - \kappa_1^\beta}, \\
\label{e-phi14} d(x_i,z) &\le& \frac{R}{3} + \frac{2r_1}{1- \kappa_1}
\end{eqnarray}
for all $i \in \N^*$, where $\kappa_1 = (1+\rho)^{-1/(\alpha+\beta)} \in (0,1)$.
Set
\[
 c_2 = \min \left( \frac{1- \kappa_1}{3} , \left( \frac{(1-\kappa_1^\beta)(2^\beta -1)}{5}\right)^{1/\beta} \frac{1}{3}\right).
\]
If $r_1 \le c_2 R$, then by \eqref{e-phi13} and \eqref{e-phi14}, we have that $(k_{i+1},x_{i+1}) \in Q(\lfloor \gamma R^\beta \rfloor, z, 2R/3)$ for all $i \in \N^*$.

If $K_1 \ge (C_1/c_2)^{(\alpha+\beta)}$ by \eqref{e-phi0}, we have $r_1 \le c_2 R$ and therefore $(k_{i+1},x_{i+1}) \in Q(\lfloor \gamma R^\beta \rfloor, z, 2R/3)$ for all $i \in \N^*$.
However \eqref{e-phi7} and \eqref{e-phi12} holds for all $i \in \N^*$, which is a contradiction. Therefore
\[
  \max_{(k,y) \in Q( \lfloor \gamma R^\beta \rfloor , z ,R/3)} q(k,y) = K_1 < (C_1/c_2)^{(\alpha+\beta)}
\]
Therefore \eqref{e-phi} holds with $C_H= (C_1/c_2)^{(\alpha+\beta)}$.
\end{proof}

\section{Heat kernel estimates}\label{s-hke}
In this section, we prove the heat kernel estimates $HKP(\beta)$ for $\beta \in (0,2)$ using the parabolic Harnack inequality \eqref{e-phi}.
We start with the proof of \ref{uhkp}.
\begin{theorem} \label{t-uhkp}
 Under the assumptions of Theorem \ref{t-ubq}, $h_n$ satisfies \ref{uhkp}.
\end{theorem}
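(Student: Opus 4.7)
The plan is to combine the on-diagonal estimate of Theorem~\ref{t-dub} with a mean-value argument built from the parabolic Harnack inequality (Theorem~\ref{t-phi}) and the hitting-time bound (Proposition~\ref{p-hitx}). Since Theorem~\ref{t-dub} already supplies the first half $h_n(x,y)\le C/V_h(n^{1/\beta})$ of \ref{uhkp}, only the off-diagonal half $h_n(x,y)\le Cn/(V_h(d(x,y))(1+d(x,y))^\beta)$ remains, and I would split its proof into three regimes. In the near-diagonal regime $d(x,y)\le c\,n^{1/\beta}$, this off-diagonal bound is a formal consequence of Theorem~\ref{t-dub}: \eqref{e-vd} gives $V_h(d(x,y))\le C_1V_h(n^{1/\beta})$, and $n\ge 1$ together with $d(x,y)\le c\,n^{1/\beta}$ gives $(1+d(x,y))^\beta\le C_2 n$, so $1/V_h(n^{1/\beta})\le C_1C_2\cdot n/(V_h(d(x,y))(1+d(x,y))^\beta)$.

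The substantive step is the far off-diagonal regime $d(x,y)>c\,n^{1/\beta}$ with $n\ge N_0$. Here I would apply Theorem~\ref{t-phi} to $q(k,z):=h_{N-k}(z,x)$, which is parabolic on $\{0,\dots,N\}\times M$ by Lemma~\ref{l-parab}. Choose $R=c'\,n^{1/\beta}$ with $c'<c/2$ small enough that $8\gamma R^\beta\le n$, set $N=n+\lfloor\gamma R^\beta\rfloor\in[n,2n]$, and take $N_0$ large enough that $R\ge R_0$ whenever $n\ge N_0$. The PHI at the space-time point $(\lfloor\gamma R^\beta\rfloor,y)\in Q(\lfloor\gamma R^\beta\rfloor,y,R/3)$, multiplied by $V(y,R/3)$ and combined with the reversibility identity $h_N(w,x)\mu_w=p_N(x,w)$, then yields
\[
V(y,R/3)\,h_n(y,x)\le C_H\sum_{w\in B(y,R/3)}p_N(x,w)=C_H\,\PP^x\bigl(X_N\in B(y,R/3)\bigr).
\]
The choice $c'<c/2$ forces $d(x,y)\ge 2R$, and $R/3\le N^{1/\beta}$ gives $B(y,R/3)\subset B(y,N^{1/\beta})$, so $\{X_N\in B(y,R/3)\}\subset\{T_X(y,N^{1/\beta})\le N\}$. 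Proposition~\ref{p-hitx} then bounds this probability by a constant multiple of $NV_h(N^{1/\beta})/(V_h(d(x,y))(1+d(x,y))^\beta)$, and the uniform comparisons $V(y,R/3)\ge c''V_h(n^{1/\beta})$, $N\asymp n$, and $V_h(N^{1/\beta})\asymp V_h(n^{1/\beta})$ (all from \eqref{e-vd}--\eqref{e-hom}) produce the desired bound.

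For the finitely many values $1\le n<N_0$, I would induct on $n$ to show $h_n(x,y)\le C_n/(V_h(d(x,y))(1+d(x,y))^\beta)$ for $x\ne y$. The base case $n=1$ is \ref{ujp}. For the step, write $h_{n+1}(x,y)=\sum_z h_n(x,z)J(z,y)\mu_z$ and split at $d(x,z)\le d(x,y)/2$ vs.\ $d(x,z)>d(x,y)/2$. On the near-$x$ half, $d(z,y)\ge d(x,y)/2$ together with \ref{ujp} and \eqref{e-vd} gives $J(z,y)\le C/(V_h(d(x,y))(1+d(x,y))^\beta)$, which pulls out of the sum and is controlled by $\sum_z h_n(x,z)\mu_z=1$; the near-$y$ half is symmetric, using the inductive bound on $h_n(x,z)$ and $\sum_z J(z,y)\mu_z=1$. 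Since $n<N_0$ is bounded, the constants $C_n$ are bounded by some $C_{N_0}$, and $n\ge 1$ allows absorbing a factor of $n$ trivially into the constant.

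The main obstacle will be the parameter balancing in the second regime: $R$ and $N$ must be chosen so that simultaneously the PHI is admissible ($R\ge R_0$ and $8\gamma R^\beta\le N$), $N\asymp n$ and $V_h(N^{1/\beta})\asymp V_h(n^{1/\beta})$, the ball $B(y,R/3)$ avoids a neighbourhood of $x$, and $R/3\le N^{1/\beta}$ so that Proposition~\ref{p-hitx} applies with its rigid link between ball radius and time. Each of these reduces to \eqref{e-vd}--\eqref{e-vc}, but fixing $c,c',N_0$ so that all conditions hold at once is where the care is required.
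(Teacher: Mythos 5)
Your proof is correct and follows essentially the same route as the paper: both derive the off-diagonal bound by feeding the hitting-time estimate of Proposition \ref{p-hitx}, averaged over a ball around $y$, into the parabolic Harnack inequality (Theorem \ref{t-phi}) applied to the parabolic function $q(k,\cdot)=h_{N-k}(\cdot,x)$, and both quote Theorem \ref{t-dub} for the on-diagonal half. The only differences are bookkeeping: the paper takes $\gamma R^\beta=n$ and lands on $h_{7n}$, then interpolates using the one-step comparison $h_n\le(C_\mu/\kappa)h_{n+1}$ (which also disposes of the small-$n$ case more quickly than your Chapman--Kolmogorov induction), whereas you take $R$ a small multiple of $n^{1/\beta}$ so as to land on $h_n$ directly.
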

\begin{proof}
 By Proposition \ref{p-hitx},
 \begin{eqnarray}
  \nonumber \sum_{z \in B(y,k^{1/\beta})} h_k(x,z) \mu_z &=& \PP^{x}(X_k \in B(y,k^{1/\beta}) \le \PP^{x} (T_X(y,k^{1/\beta}) \le k) \\
\label{e-uh1}  & \le & C_1 \frac{k V_h(k^{1/\beta})}{V_h(d(x,y))(1+d(x,y))^{\beta}}
 \end{eqnarray}
for all $k \in \N^*$ and for all $x,y \in M$. By \eqref{e-uh1} and \eqref{e-hom}, there exists $C_2>0$ such that
\begin{equation} \label{e-uh2}
\min_{z \in B(y,k^{1/\beta})} h_k(x,z) \le  C_2 \frac{k }{V_h(d(x,y))(1+d(x,y))^{\beta}}
\end{equation}
for all $x,y \in M$ and for all $k \in \N^*$.

 Let $R>0$ be defined to satisfy $\gamma R^\beta= n$.
 Since we can take $\gamma < 3^{-2} \le 3^{-\beta}$ without loss of generality, we have $R/3 \ge n^{1/\beta}$.

 By Lemma \ref{l-parab},  $q(k,w)=h_{8n-k}(x,w)$ is parabolic on
 $\{0,1,\ldots, \lfloor 8 \gamma R^\beta \rfloor \} \times M$. By \eqref{e-uh2} and $R/3 \ge n^{1/\beta}$, we have
 \begin{equation}\label{e-uh3}
  \min_{z \in B(y,R/3)} q(0,z) \le \min_{z \in B(y,n^{1/\beta})} h_{8n}(x,z) \le  C_2 \frac{8n }{V_h(d(x,y))(1+d(x,y))^{\beta}}.
 \end{equation}
Since
\begin{equation*}
 h_{7n}(x,y) \le \max_{(k,w) \in Q(\lfloor \gamma R^\beta \rfloor, y, R/3)} q(k,w),
\end{equation*}
by \eqref{e-uh3} and parabolic Harnack inequality \eqref{e-phi}, there exist $C_3,N_0>0$ such that
\begin{equation}\label{e-uh4}
 h_{7n} (x,y) \le C_3 \frac{n }{V_h(d(x,y))(1+d(x,y))^{\beta}}
\end{equation}
for all $x,y \in M$ and for all $n \in \N$ with $n \ge N_0$. Combining \eqref{e-1s}, \eqref{e-uh4} along with Theorem \ref{t-dub} yields \ref{uhkp}.
\end{proof}
\begin{rem} We sketch an alternate proof of Theorem \ref{t-uhkp} that doesn't require parabolic Harnack inequality.
 Using the comparison techniques between discrete time and corresponding continuous time Markov chains
 developed by T. Delmotte(\cite{Del}), we can prove \ref{uhkp} for the kernel $h_n$ using the upper bound for $q_t$ given in Theorem \ref{t-ubq}.
 (see Lemma 3.5 and Theorem 3.6 of \cite{Del})
\end{rem}
We now obtain a near diagonal lower estimate for $h_n$ using parabolic Harnack inequality.
\begin{lem}\label{l-ndlb}
Under the assumptions of Theorem \ref{t-ubq}, there exist $c_1,c_2>0$ such that
\begin{equation}\label{e-ndlb}
 h_n(x,y) \ge \frac{c_1}{V_h(n^{1/\beta})}
\end{equation}
for all $n \in \N^*$ and for all $x,y \in M$ such that $d(x,y) \le c_2 n^{1/\beta}$.
\end{lem}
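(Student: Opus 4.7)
The plan is to combine the global upper bound of Theorem \ref{t-uhkp} with the parabolic Harnack inequality (Theorem \ref{t-phi}) via a two-stage argument: first establish an on-diagonal lower bound $h_{2m}(x,x)\gtrsim 1/V_h(m^{1/\beta})$, then propagate it to a near-diagonal bound using PHI.

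For the on-diagonal lower bound, I would exploit the exit time estimate of Proposition \ref{p-exitX} together with Cauchy--Schwarz. Pick $r_n \asymp n^{1/\beta}$ with $\lfloor \gamma r_n^\beta \rfloor = n$. Proposition \ref{p-exitX} gives $\PP^x(d(X_n,x)\le r_n/2)\ge 3/4$, that is,
\[
\sum_{y\in B(x,r_n/2)} h_n(x,y)\,\mu_y \ge 3/4.
\]
Applying Cauchy--Schwarz in $\ell^2(B(x,r_n/2),\mu)$ and using \eqref{e-hom}, \eqref{e-vd} to bound $\mu(B(x,r_n/2)) \lesssim V_h(n^{1/\beta})$,
\[
\sum_{y\in B(x,r_n/2)} h_n(x,y)^2\,\mu_y \ge \frac{(3/4)^2}{\mu(B(x,r_n/2))} \ge \frac{c}{V_h(n^{1/\beta})}.
\]
By Chapman--Kolmogorov \eqref{e-ck} and symmetry of $h_n$, $h_{2n}(x,x)=\sum_y h_n(x,y)^2\mu_y$, so $h_{2n}(x,x)\ge c/V_h(n^{1/\beta})$. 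The case of odd indices follows from the one-step estimate \eqref{e-1s} together with volume doubling, yielding $h_m(x,x)\ge c_3/V_h(m^{1/\beta})$ for every $m\in\N^*$ and $x\in M$.

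For the near-diagonal extension, fix $c_2\in(0,\,1/(3(8\gamma)^{1/\beta}))$ (so that $8\gamma(3c_2)^\beta<1$) and let $y\in M$ with $d(x,y)\le c_2 n^{1/\beta}$. Set $R:=3c_2 n^{1/\beta}$, so that $y\in B(x,R/3)$ and $\lfloor 8\gamma R^\beta\rfloor \le n$. By Lemma \ref{l-parab}, $q(k,w):=h_{n-k}(x,w)$ is parabolic on $\{0,\dots,n\}\times M$, hence on $\{0,\dots,\lfloor 8\gamma R^\beta\rfloor\}\times M$. Provided $n$ is large enough that $R\ge R_0$, Theorem \ref{t-phi} applied with $z=x$ and evaluated at the point $(\lfloor\gamma R^\beta\rfloor,x)\in Q(\lfloor \gamma R^\beta\rfloor,x,R/3)$ gives
\[
h_{n-\lfloor \gamma R^\beta\rfloor}(x,x) = q(\lfloor \gamma R^\beta\rfloor,x) \le C_H \min_{w\in B(x,R/3)} h_n(x,w).
\]
Since $\lfloor \gamma R^\beta\rfloor\le n/2$ by the choice of $c_2$, the remaining time $n-\lfloor \gamma R^\beta\rfloor$ is comparable to $n$, and the on-diagonal lower bound together with volume doubling gives $q(\lfloor \gamma R^\beta\rfloor,x)\ge c_4/V_h(n^{1/\beta})$. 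Therefore $h_n(x,y)\ge (c_4/C_H)/V_h(n^{1/\beta})$ for every $y\in B(x,c_2 n^{1/\beta})$.

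The remaining case is small $n$: if $R=3c_2 n^{1/\beta}<R_0$, then $n$ is bounded and $d(x,y)\le c_2 n^{1/\beta}$ lies in a bounded range. Iterated application of \eqref{e-1s}, combined with $h_1(x,x)\ge \kappa>0$ from \ref{ljp} and \eqref{e-count} when $x=y$, or with the pointwise lower bound \ref{ljp} on $J=h_1$ when $x\ne y$, produces a uniform positive lower bound on $h_n(x,y)$, which dominates $c_1/V_h(n^{1/\beta})$ for $c_1$ small since $V_h(n^{1/\beta})$ is bounded. The chief technical obstacle is calibrating the parameters in the PHI step so that the evaluation point $(\lfloor \gamma R^\beta\rfloor,x)$ genuinely lies in the Harnack cylinder and the remaining time is $\asymp n$; once this is arranged the rest is bookkeeping.
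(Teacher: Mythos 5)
Your proposal is correct and follows essentially the same route as the paper: the exit-time estimate of Proposition \ref{p-exitX} plus Cauchy--Schwarz yields the on-diagonal lower bound, and the parabolic Harnack inequality then transfers it to the near-diagonal regime. The only (cosmetic) difference is in the PHI step: the paper runs the chain for time $8n$ with $R\asymp n^{1/\beta}$ and a cylinder centred at $y$, so it bounds $h_{8n}$ and must fill in the remaining indices with \eqref{e-1s}, whereas you take $R=3c_2n^{1/\beta}$ small, centre the cylinder at $x$, and bound $h_n$ directly --- a slightly cleaner bookkeeping of the same argument.
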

\begin{proof}
 By Proposition \ref{p-exitX}, there exists $C_1>0$ such that
 \begin{equation*}
\PP^{x}\left(X_n \notin B(x,C_1 n^{1/\beta})\right) \le \PP^{x}( \max_{k \le n}d(X_k,x) > C_1 n^{1/\beta}) \le 1/2
 \end{equation*}
for all $x \in M$ and for all $n \in \N^*$.
Thus
\begin{equation}\label{e-ndl1}
 \sum_{y \in B(x,C_1 n^{1/\beta})} h_n(x,y) \mu_y = 1- \PP^{x}\left(X_n \notin B(x,C_1 n^{1/\beta})\right) \ge \frac{1}{2}
\end{equation}
for all $x \in M$ and for all $n \in \N^*$. Hence, there exists $c_3>0$ such that
\begin{eqnarray}
 h_{2n}(x,x) &= &\sum_{y \in M} h_n^2(x,y) \mu_y \ge \sum_{y \in  B(x,C_1 n^{1/\beta})} h_n^2(x,y) \mu_y \nonumber \\
 & \ge & \frac{1}{V(x,C_1n^{1/\beta})} \left( \sum_{y \in B(x,C_1 n^{1/\beta})} h_n(x,y) \mu_y  \right)^2 \ge \frac{1}{4 V(x,C_1 n^{1/\beta})} \nonumber \\
 & \ge & \frac{c_3}{V_h(n^{1/\beta})} \label{e-ndl2}
\end{eqnarray}
for all $x \in M$ and for all $n \in \N^*$. The second line above follows from Cauchy-Schwarz and \eqref{e-ndl1} and last line follows from \eqref{e-hom} and \eqref{e-vc}.
Combining \eqref{e-ndl2}, \eqref{e-1s}, \eqref{e-vc} and \ref{ljp}, there exists $c_4>0$ such that
\begin{equation}\label{e-ndl3}
 h_n(x,x) \ge  \frac{c_4}{V_h(n^{1/\beta})}
\end{equation}
for all $n \in \N^*$ and for all $x \in M$.

Let $n \in \N^*$ and let $R$ be defined by $n= \gamma R^\beta$.
As in the proof of Theorem \ref{t-uhkp}, we define the function $q(k,w)= h_{8n-k}(x,w)$ which is parabolic on
$\{0,1,\ldots, \lfloor 8 \gamma R^\beta \rfloor\}\times B(x,r)$.
By \eqref{e-phi}, \eqref{e-ndl3} and \eqref{e-vc},  there exist $c_5,C_2, N_0>0$ such that
\begin{eqnarray}
 \min_{z \in B(y,C_2 n^{1/\beta})} h_{8n}(x,z) &\ge&   \min_{z \in B(y,R/3)} q(0,z) \ge  C_H^{-1} h_{7n}(x,x)  \nonumber \\
\label{e-ndl4} &\ge& C_H^{-1} \frac{c_4}{V_h((7n)^{1/\beta})} \ge \frac{c_5}{V_h(n^{1/\beta})}
\end{eqnarray}
for all $n \in \N^*$ with $n \ge N_0$ and for all $x \in M$. Combining \eqref{e-1s}, \eqref{e-ndl4} and \ref{ljp}, we get the desired near diagonal lower bound \eqref{e-ndlb}.
\end{proof}
Next, we prove the full lower bound \ref{lhkp}. This can be done using parabolic Harnack inequality as in Theorem 5.2 of \cite{BL}.
However, we prove the off-diagonal lower bound using a probabilistic argument which relies on the exit time estimate of Proposition \ref{p-exitX}.
\begin{theorem}\label{t-lhkp}
Under the assumptions of Theorem \ref{t-ubq}, $h_n$ satisfies \ref{lhkp}.
\end{theorem}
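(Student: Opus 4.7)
The plan is to handle two regimes of $r := d(x,y)$ separately. In the near-diagonal regime $r \le c_2 n^{1/\beta}$ for a suitably chosen $c_2 > 0$, Lemma \ref{l-ndlb} directly delivers $h_n(x,y) \ge c/V_h(n^{1/\beta})$; since $n/(V_h(r)(1+r)^\beta) \ge c'/V_h(n^{1/\beta})$ throughout this regime by the volume-doubling bound \eqref{e-vc}, this already matches the lower bound demanded by \ref{lhkp}.

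For the off-diagonal regime $r > c_2 n^{1/\beta}$, I would use a ``one big jump'' decomposition in which the factor of $n$ emerges from summation over the step at which the single big jump occurs. Fix a scale $s = C n^{1/\beta}$ with $C$ large enough that $\lfloor \gamma(2s)^\beta \rfloor \ge n$, and take $c_2 \ge 2C$ so that $B(x,s) \cap B(y,s) = \emptyset$. For each $k \in \{1,\ldots,n\}$ define the event
\[
 F_k = \{X_j \in B(x,s) \text{ for } 0 \le j \le k-1;\ X_j \in B(y,s) \text{ for } k \le j \le n-1;\ X_n = y\}.
\]
The events $\{F_k\}_{k=1}^n$ are pairwise disjoint, because on $F_k \cap F_{k'}$ with $k<k'$ the value $X_k$ would have to lie in the two disjoint balls simultaneously; hence $h_n(x,y)\mu_y = \PP^x(X_n=y) \ge \sum_{k=1}^n \PP^x(F_k)$.

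To bound each $\PP^x(F_k)$ from below, I would apply the Markov property at times $k-1$ and $k$ to split the event into three factors: (i) $\PP^x(\tau_X(x,s) > k-1) \ge 3/4$, by Proposition \ref{p-exitX} together with the choice of $s$; (ii) a single transition from some $w \in B(x,s)$ to some $z \in B(y,s)$ contributing $J(w,z)\mu_z \ge c\mu_z/(V_h(r)(1+r)^\beta)$ uniformly in $w,z$, via \ref{ljp} combined with volume doubling (using $d(w,z) \in [r/2,3r/2]$); and (iii) after summing over $z$ and invoking the reversibility (symmetry) of the killed chain on $B(y,s)$, a factor $\mu_y\PP^y(\tau_X(y,s) > n-k) \ge (3/4)\mu_y$, again by Proposition \ref{p-exitX}. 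Multiplying these three lower bounds yields $\PP^x(F_k) \ge c_3 \mu_y/(V_h(r)(1+r)^\beta)$ uniformly for $k \in \{1,\ldots,n\}$, and summation over $k$ gives $h_n(x,y) \ge c_4 n/(V_h(r)(1+r)^\beta)$, as required by \ref{lhkp}.

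The main obstacle is calibrating the scale $s$ against the valid range of $k$: to collect $n$ terms, the exit-time bound must apply for every $k-1,n-k \le n$, which forces $s \asymp n^{1/\beta}$ and thereby pins down the threshold $c_2$ in the case split. A secondary bookkeeping issue is reconciling this $c_2$ with the constant appearing in Lemma \ref{l-ndlb}; any mismatch lies in the window $r \asymp n^{1/\beta}$, where the two terms inside the minimum in \ref{lhkp} are comparable by volume doubling, and can be bridged by slightly enlarging the radius in Lemma \ref{l-ndlb} (for example by chaining the parabolic Harnack inequality \eqref{e-phi} across a finite number of overlapping balls).
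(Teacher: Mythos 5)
Your proposal is correct and follows essentially the same route as the paper: near-diagonal bound from Lemma \ref{l-ndlb}, and for large $d(x,y)$ a sum over $n$ disjoint ``one big jump'' events whose probabilities are bounded below via the exit-time estimate of Proposition \ref{p-exitX}, reversibility for the return leg, and \ref{ljp} for the single long step. The only (minor) divergence is the intermediate window $c_2 n^{1/\beta} \lesssim d(x,y) \lesssim C n^{1/\beta}$, which the paper bridges by a Chapman--Kolmogorov convolution of the far-off-diagonal and near-diagonal bounds rather than by enlarging the radius in Lemma \ref{l-ndlb}; both fixes are standard and your acknowledged constant-calibration issues are genuinely only bookkeeping.
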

\begin{proof}
  By Proposition \ref{p-exitX}, there exists $C_1>0$ such that
 \begin{equation} \label{e-lb1}
\PP^{x}\left(X_n \notin B(x,C_1 n^{1/\beta})\right) \le \PP^{x}( \max_{k \le n}d(X_k,x) > C_1 n^{1/\beta}) \le 1/2
 \end{equation}
for all $x \in M$ and for all $n \in \N^*$.

We will first handle the case $d(x,y) \ge 3 C_1  n^{1/\beta}$. Define the event
\[
A_k :=  \{ X_0=x, X_n=y, \tau_{X}(x,C_1 n^{1/\beta})=k ,X_j \in B(y,C_1 n^{1/\beta}) \forall j \in [k,n] \cap \N\}
\]
for $k=1,2,\ldots,n$.
By \eqref{e-count}, we have
\begin{equation}\label{e-lb2}
 h_n(x,y) = p_n(x,y) \mu_y^{-1} \ge C_\mu^{-1} p_n(x,y) \ge C_\mu^{-1} \sum_{k=1}^n \PP(A_k).
\end{equation}
By reversibility \eqref{e-reverse} and \eqref{e-count},
\begin{eqnarray}
\nonumber \lefteqn{\PP(X_0=z_0,X_1=z_1, \ldots X_r=z_r)} \\
\nonumber &=& \frac{\mu_{z_r}}{\mu_{z_0}} \PP(X_0=z_r,X_1=z_{r-1}, \ldots X_r=z_0)  \\
& \ge & C_\mu^{-2} \PP(X_0=z_r,X_1=z_{r-1}, \ldots X_r=z_0) \label{e-lb3}
 \end{eqnarray}
for all $r \in \N$  and for all $z_i$'s in $M$. By \eqref{e-lb3}, there exists $c_3>0$ such that
\begin{eqnarray}
 \PP(A_k) &\ge& C_\mu^{-2} \sum_{x_{k-1}, x_k \in M } \left\{ \PP^{x}( \tau_X(x,C_1n^{1/\beta}) > k-1, X_{k-1}=x_{k-1} ) \right.  \nonumber\\
& &  \times \left. p(x_{k-1},x_k) \times \PP^{y}( \tau_X(y,C_1n^{1/\beta}) > n- k, X_{n-k}=x_k) \right\}  \nonumber \\
&\ge& \frac{c_3}{V_h(d(x,y)) (1+d(x,y))^{1/\beta}} \label{e-lb4}
\end{eqnarray}
for all $n \in \N^*$ , for all $1 \le k \le n$ and for all $x,y \in M$ with $d(x,y) \ge 3 C_1 n^{1/\beta}$.
The last line follows from triangle inequality $d(x_{k-1},x_k) \ge d(x,y) - 2C_1 n^{1/\beta} \ge d(x,y)/3$, along with \ref{ljp}, \eqref{e-vc}, \eqref{e-count} and \eqref{e-lb1}.
Combining \eqref{e-lb2} and \eqref{e-lb4}, there exists $c_4>0$ such that
\begin{equation}\label{e-lb5}
 h_n(x,y) \ge \frac{c_4 n}{V_h(d(x,y)) (1+d(x,y))^\beta}
\end{equation}
for all $n \in \N^*$ and for all $x,y \in M$ with $d(x,y) \ge 3C_1 n^{1/\beta}$.
Thus we get the desired lower bound for case $d(x,y) \ge 3C_1 n^{1/\beta}$.

Let $c_2 >0$ be the constant from Lemma \ref{l-ndlb}. By Lemma \ref{l-ndlb}, it remains to check the case $c_2 n^{1/\beta} < d(x,y) < 3C_1 n^{1/\beta}$. Choose $K \in \N$, so that
\begin{equation*}
 K \ge \left( \frac{6C_1}{c_2}\right)^\beta \vee (1 - 2^{-\beta})^{-1}.
\end{equation*}
Then if $n \in \N^*$ with  $d(x,z) \ge c_2 n^{1/\beta}/2 $, we have that
\[
 d(x,z) \ge 3 C_1 \left\lfloor \frac{n}{K} \right\rfloor ^{1/\beta},   \left( n -\left\lfloor \frac{n}{K} \right\rfloor  \right)^{1/\beta} \ge \frac{n^{1/\beta}}{2}.
\]
If $d(x,y) \ge c_2 n^{1/\beta}$ and $z \in B(y, c_2 n^{1/\beta}/2)$, then $d(x,z) \ge c_2 n^{1/\beta}/2 \ge d(y,z)$.
Therefore by the above inequalities, we have
\[
 d(x,z) \ge 3 C_1 \left\lfloor \frac{n}{K} \right\rfloor ^{1/\beta},  d(z,y) \le c_2 \left( n -\left\lfloor \frac{n}{K} \right\rfloor  \right)^{1/\beta}
\]
for all $z \in B(y,c_2n^{1/\beta}/2)$ and for all $x$ such that $d(x,y) > c_2 n^{/\beta}$.
By Chapman-Kolmogorov equation \eqref{e-ck}
\begin{eqnarray}
 h_n(x,y) \ge \sum_{z \in B(y, c_2 n^{1/\beta}/2)} h_{\left\lfloor \frac{n}{K} \right\rfloor } (x,z) h_{n - \left\lfloor \frac{n}{K} \right\rfloor} (z,y) \mu_z
\end{eqnarray}
We use \eqref{e-lb5} to estimate  $h_{\left\lfloor \frac{n}{K} \right\rfloor } (x,z)$ and Lemma \ref{l-ndlb} to estimate $h_{n - \left\lfloor \frac{n}{K} \right\rfloor} (z,y)$ and use \eqref{e-vc}, to get a constant $c_5>0$ such that
\begin{eqnarray}
 h_n(x,y) \ge \frac{c_5}{V_h(n^{1/\beta})}
\end{eqnarray}
for all $n \in \N$ with $n \ge K$ and for all $x,y \in M$ with $c_2 n^{1/\beta} < d(x,y) < 3C_1 n^{1/\beta}$.
The case $n < K$ follows from \eqref{e-1s} along with \ref{ljp}.
\end{proof}
\section{Generalization to Regularly varying functions }\label{s-gen}
In this section, we replace $(1+d(x,y))^\beta$ in \ref{ljp} and \ref{ujp} by a general regularly varying function $\phi$ of index $\beta$. For a comprehensive introduction to regular variation, we refer the reader to  \cite{BGT}.
\begin{definition}
 Let $\rho \in \R$. We say $\phi:[0,\infty) \to \R$ is \emph{regularly varying of index $\rho$}, if $\lim_{x \to \infty} \phi(\lambda x)/ \phi(x) \to \lambda^\rho$ for all $\lambda >0$.
 A function $l:[0,\infty) \to \R$ is \emph{slowly varying} if $l$ is regularly varying of index $0$.
\end{definition}
We generalize Theorems \ref{t-uhkp} and \ref{t-lhkp} for more general jump kernels using a change of metric argument. We change the metric $d$ by composing it with an appropriate concave function, so that under the changed metric
the jump kernel satisfies \ref{ljp} and \ref{ujp}. The following Lemma provides us with the concave function we need.
\begin{lemma} \label{l-concave}
 Let $\rho \in (0,1)$ and $f:[0,\infty) \to (0,\infty)$ be a continuous, positive regularly varying function with index $\rho$. Then there exists a concave, strictly increasing function $g$ and a constant $C >0$ such that
 $g(0)=0$, $\lim_{x \to \infty} g(x)/f(x)=1$ and
 \[
 C^{-1} \le \frac{ 1+g(x)}{f(x)} \le C
 \]
for all $x \in [0,\infty)$.
\end{lemma}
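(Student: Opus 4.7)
The plan is to use the Karamata representation for regularly varying functions, together with the smooth variation theorem, to replace $f$ by an asymptotically equivalent \emph{smooth} function whose second derivative has a computable sign. Since $\rho\in(0,1)$, this smooth version will be eventually strictly concave and strictly increasing, and a simple linear modification near the origin will then yield the desired globally concave $g$.

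Concretely, I would write $f(x) = x^\rho L(x)$ on $(0,\infty)$ with $L$ slowly varying, and invoke the smooth variation theorem (\cite[Theorem~1.8.2]{BGT}) to produce a $C^2$ slowly varying function $\tilde L$ with $\tilde L(x)/L(x)\to 1$, $x\tilde L'(x)/\tilde L(x)\to 0$, and $x^2\tilde L''(x)/\tilde L(x)\to 0$ as $x\to\infty$. Setting $\tilde f(x):=x^\rho \tilde L(x)$, a short direct computation gives
\[
\tilde f''(x)=x^{\rho-2}\bigl[\rho(\rho-1)\tilde L(x)+2\rho\,x\tilde L'(x)+x^2\tilde L''(x)\bigr],
\]
which is asymptotic to $\rho(\rho-1)\,x^{\rho-2}\tilde L(x)<0$; similarly $\tilde f'(x)\sim\rho x^{\rho-1}\tilde L(x)>0$. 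Hence there exists $X_0>0$ with $\tilde f'>0$ and $\tilde f''<0$ on $[X_0,\infty)$, and in particular $\tilde f'(x)\sim \rho\tilde f(x)/x$.

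Since $\rho<1$, I can then choose $X_1\ge X_0$ so large that $\tilde f'(X_1)\le \tilde f(X_1)/X_1$, and define
\[
g(x):=\begin{cases}\tilde f(X_1)\,x/X_1 & \text{if }0\le x\le X_1,\\ \tilde f(x) & \text{if }x>X_1.\end{cases}
\]
By construction $g(0)=0$ and $g$ is continuous; it is strictly increasing on each piece (the linear one has positive slope; $\tilde f$ increases on $[X_1,\infty)$) and concave on each piece, and concavity across the join at $X_1$ follows because the left slope $\tilde f(X_1)/X_1$ dominates the right slope $\tilde f'(X_1)$. The asymptotic $g(x)/f(x)=\tilde f(x)/f(x)\to 1$ is immediate from $\tilde L\sim L$.

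It remains to verify the two-sided bound $C^{-1}\le (1+g(x))/f(x)\le C$. At infinity the ratio tends to $1$; on the compact interval $[0,X_1]$ both $1+g$ and $f$ are continuous and strictly positive, so the ratio is bounded above and below by positive constants there, and the two regimes match up. I expect the only delicate step to be the appeal to the smooth variation theorem; avoiding it would force one to mollify $L$ by hand on a logarithmic scale and re-prove the three asymptotics above, which is standard but notationally awkward. Once this smoothing ingredient is accepted, the remainder of the argument is elementary calculus.
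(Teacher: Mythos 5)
Your proposal is correct and follows essentially the same route as the paper: both invoke the smooth variation theorem of \cite{BGT} to replace $f$ by a smooth, asymptotically equivalent function that is eventually increasing and concave, then splice in a linear piece through the origin and get the two-sided bound by compactness. The only (immaterial) difference is how concavity at the join is arranged — you push the gluing point out until $x\tilde f'(x)\le\tilde f(x)$ (using $\rho<1$), whereas the paper instead adds a constant $B$ to the smooth piece so that the chord from the origin is steep enough.
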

\begin{proof}
 By Theorem 1.8.2 and Proposition 1.5.1 of \cite{BGT}, there exists $A>0$ and a smooth function $f_1:(A,\infty) \to (0,\infty)$ such that
 \[
\lim_{x \to \infty}  x^nf_1^{(n)}(x)/f_1(x) \to \rho(\rho -1)\ldots (\rho -n +1), \hspace{1cm} \forall n = 1,2,\ldots,
 \]
 $\lim_{x\to \infty} f_1(x)/f(x)=1$.

 Let $B = (A+2) f_1^{(1)} (A+1)$. Then the function
 \begin{equation}
  g(x) = \begin{cases} x (B+f_1(A+1))/(A+1) &\mbox{if } x \le A+1 \\
 B+ f_1(x) & \mbox{if } x \ge A+1 \end{cases}
 \end{equation}
is concave. It is clear the $g$ is strictly increasing, continuous, $g(0)=0$ and $\lim_{x \to \infty} g(x)/f(x)=1$.
The constant $C>0$ can be obtained from the fact the $f$ is continuous and positive and $g$ is continuous and non-negative.
\end{proof}
\begin{proof}[Proof of Theorem \ref{t-main}]
 Choose $\delta \in (\beta,2)$. Then $\phi^{1/\delta}$ is regularly varying with index $\beta/\delta \in (0,1)$. By Lemma \ref{l-concave}, there exists a
 continuous, concave, strictly increasing function $g$ and a constant $C_3>0$ such that $g(0)=0$, \[ \lim_{x\to \infty} g(x)/f^{1/\delta}(x)=1 \] and $C_3^{-1} \le (1+g(x))^\delta/f(x) \le C_3$ for all $x \ge 0$.
 Define the new metric $d'(x,y) = g(d(x,y))$. Since $g$ is strictly increasing, $d'$ is uniformly discrete metric. By $V'(x,r)$, we denote the volume of balls of radius $r$ for the metric measure space $(M,d',\mu)$, that is
 \[
  V'(x,r)= \mu \left( \{ y \in M : d'(x,y) \le r \} \right).
 \]
By \eqref{e-hom}, there exists $C_4>0$
\[
C_4^{-1} V_h'(r)\le  V'(x,r) \le C_4 V_h'(r)
\]
for all $x \in M$ and for all $r>0$ where $V_h'(r):= V_h(g^{-1}(r))$ and $g^{-1}:[0,\infty) \to [0,\infty)$ denotes the inverse of $g$.
By \cite[Proposition 1.5.15]{BGT} and \eqref{e-vc},  we have that there exists $C_5>0$ such that
\begin{equation} \label{e-mn1}
C_5^{-1} V_h( r^{\delta/\beta} l_\# (r^{\delta/\beta}))\le  V_h'(r) \le C_5 V_h( r^{\delta/\beta} l_\# (r^{\delta/\beta}))
\end{equation}
for all $r>0$.
By the properties of $g$ and \eqref{e-mn}, there exists $C_6>0$
 \begin{equation*}
 C_6^{-1}\frac{1}{V_h'(d'(x,y)) (1+d'(x,y))^\delta } \le  J(x,y)=J(y,x) \le C_6 \frac{1}{V_h'(d'(x,y)) (1+d'(x,y))^\delta}
 \end{equation*}
 for all $x,y \in M$. Therefore by Theorem \ref{t-uhkp}, there exists $C_7>0$ such that
\[
 h_n(x,y) \le C_7 \left( \frac{1}{V_h'( n^{1/\delta})} \wedge \frac{n}{V_h'(d'(x,y)) (1+d'(x,y))^\delta}\right).
\]
for all $n \in \N^*$ and for all $x,y \in M$.
Combining with \eqref{e-mn1} and \eqref{e-vc}, there exists $C_8>0$ such that
\[
 h_n(x,y) \le C_8 \left( \frac{1}{ V_h(n^{1/\beta}l_{\#}(n^{1/\beta}))} \wedge \frac{n}{ V_h(d(x,y))\phi(d(x,y))} \right)
\]
for all $n \in \N^*$ and for all $x,y \in M$. A similar argument using Theorem \ref{t-lhkp} gives the desired lower bound on $h_n$.
\end{proof}
\subsection*{Acknowledgments}
The authors thank Tianyi Zheng for useful discussions.
\bibliographystyle{amsalpha}

\end{document}